\theoremstyle{plain}
\newtheorem{theorem}{Theorem}[section]
\newtheorem{lemma}[theorem]{Lemma}
\newtheorem{remark}[theorem]{Remark}
\newtheorem{proposition}[theorem]{Proposition}
\numberwithin{equation}{section}
\theoremstyle{definition}
\newtheorem{definition}[theorem]{Definition}
\theoremstyle{remark}
\newcommand{\be}{{\mathbf e}}
\newcommand{\bU}{{\mathbf U}}
\newcommand{\bT}{{\mathbf T}}
\newcommand{\cD}{{\mathcal D}}
\newcommand{\cH}{{\mathcal H}}
\newcommand{\cL}{{\mathcal L}}
\newcommand{\cM}{{\mathcal M}}
\newcommand{\cR}{{\mathcal R}}
\newcommand{\cK}{{\mathcal K}}
\newcommand{\cS}{{\mathcal S}}
\newcommand{\cE}{{\mathcal E}}
\newcommand{\cU}{{\mathcal U}}
\newcommand{\cX}{{\mathcal X}}
\newcommand{\cY}{{\mathcal Y}}
\newcommand{\cO}{{\mathcal O}}
\newcommand{\cG}{{\mathcal G}}
\newcommand{\D}{{\mathbb D}}
\newcommand{\C}{{\mathbb C}}
\newcommand{\sbm}[1]{\left[\begin{smallmatrix} #1
		\end{smallmatrix}\right]}
\begin{document}

\title[de Branges-Rovnyak spaces]{de Branges-Rovnyak spaces and norm-constraint
interpolation}
\author[J. A. Ball]{Joseph A. Ball}
\address{Department of Mathematics,
Virginia Tech,
Blacksburg, VA 24061-0123, USA}
\email{joball@math.vt.edu}
\author[V. Bolotnikov]{Vladimir Bolotnikov}
\address{Department of Mathematics,
The College of William and Mary,
Williamsburg VA 23187-8795, USA}
\email{vladi@math.wm.edu}

\begin{abstract}
For $S$ a contractive analytic operator-valued function on the 
unit disk ${\mathbb D}$, de Branges and Rovnyak associate a  
Hilbert space of analytic functions $\cH(S)$. A companion survey 
provides equivalent definitions and basic properties of these spaces 
as well as applications to function theory and operator theory.
The present survey brings to the fore more recent applications to a 
variety of more elaborate function theory problems, including
$H^\infty$-norm constrained interpolation, connections with the 
Potapov method of Fundamental Matrix Inequalities, parametrization 
for the set of all solutions of an interpolation problem, variants of 
the Abstract Interpolation Problem of Katsnelson, Kheifets, and 
Yuditskii, boundary behavior and boundary interpolation in de 
Branges-Rovnyak spaces themselves, and 
extensions to multivariable and Kre\u{\i}n-space settings.
\end{abstract}

\subjclass{47A57}
\keywords{de Branges-Rovnyak spaces, $H^\infty$-norm constrained interpolation.}

\maketitle
\tableofcontents

\section{Introduction}
 In the late 1960s and early 1970s, 
Louis de Branges and James Rovnyak introduced and studied spaces of 
vector-valued holomorphic functions on the open unit disk $\D$ associated with 
what is now called a Schur-class function $S \in \cS(\cU, \cY)$ 
(i.e., a holomorphic function $S$ on the unit disk with values equal 
to contraction operators between Hilbert coefficient spaces $\cU$ and 
$\cY$. Motivation for the study of these spaces came from   
quantum scattering theory (see \cite{dBR1, dB77, dBS}), and operator model 
theory for Hilbert space contraction operators and the invariant 
subspace problem (see \cite[Appendix]{dBR1} and \cite{dBR2}).

\smallskip

Interpolation by Schur-class functions is an older area which appeared first
within geometric function theory. Over the years there have been a variety of approaches
to the study of Schur-class functions and associated interpolation problems (e.g., Schur 
algorithm,
iterated one-step extension procedures, transfer-function realization techniques, 
the Grassmannian Kre\u{\i}n-space geometry approach, reproducing kernel Hilbert space 
methods,  
and commutant-lifting methods to mention a few).  The general topic for this survey 
article is de Branges-Rovnyak spaces;
hence the focus here is only on those approaches which rely to some extent on de 
Branges-Rovnyak spaces.

There are now at least three 
distinct ways of introducing the de Branges-Rovnyak spaces: 
\begin{enumerate}
\item  the 
original definition of de Branges and Rovnyak (as the complementary 
space of $S \cdot H^{2}$), 
\item  as the range of the Toeplitz defect 
operator with lifted norm, or 
\item  as the reproducing kernel Hilbert 
space with reproducing kernel given by the de Branges-Rovnyak 
positive kernel.  
\end{enumerate}

\section{de Branges-Rovnyak spaces}

In what follows, the symbol $\cL(\cU,\cY)$ stands for the space of bounded linear 
operators mapping a Hilbert space $\cU$ into a Hilbert space $\cY$, abbreviated to 
$\cL(\cY)$ in case $\cU=\cY$.   The notation $H^2(\cY)$  is used to 
denote the standard Hardy space of $\cY$-valued functions on the open unit disk $\D$ 
with square-summable sequence of Taylor coefficients while 
$\cS(\cU,\cY)$ denotes the Schur class of functions analytic on $\D$
with values equal to contractive operators in $\cL(\cU,\cY)$. The de Branges-Rovnyak
space $\cH(S)$ associated with a given Schur-class function $S\in\cS(\cU,\cY)$
was originally defined as the complementary  space of $S \cdot H^{2}$ by the prescription
\begin{equation}
\cH(S) =\{ f \in H^{2}(\cU) \colon \| f\|^{2}_{\cH(S)}: = \sup_{g \in
H^{2}(\cU)}\{
\|f + Sg \|^{2}_{H^{2}(\cY)}  - \| g \|^{2}_{H^{2}(\cU)}\} < \infty\}.\label{def1HS}
\end{equation}
In particular, it follows from \eqref{def1HS} that $\|f\|_{\cH(K_{S})}\ge
\|f\|_{H^2(\cY)}$ for every $f\in\cH(K_{S})$, i.e., that $\cH(K_{S})$ is contained in
$H^2(\cY)$ contractively.

\smallskip

Two equivalent definitions of de Branges-Rovnyak spaces (more convenient in certain 
contexts)
involve the notion of a {\em reproducing kernel Hilbert space} which will be now recalled.
\subsection{Reproducing kernel Hilbert spaces}  \label{S:RKHS}
 A {\em reproducing kernel Hilbert space} (RKHS) is a Hilbert space whose elements 
are functions on some set $\Omega$
 with values in a coefficient Hilbert space, say $\cY$, such that the
 evaluation map $\be(\omega) \colon f \mapsto f(\omega)$ is
 continuous from $\cH$ into $\cY$ for each $\omega \in \Omega$.
 Associated with any such space is a positive $\cL(\cY)$-valued
 kernel on $\Omega$, i.e., a function $K \colon \Omega \times \Omega
 \to \cL(\cY)$ with the positive-kernel property
 \begin{equation}  \label{posker}
 \sum_{i,j=1}^{N} \langle K(\omega_{i}, \omega_{j}) y_{j}, y_{i}
 \rangle_{\cY} \ge 0
 \end{equation}
for any choice of finitely many points $\omega_{1}, \dots, \omega_{N}
 \in \Omega$ and vectors $y_{1}, \dots, y_{N} \in \cY$,
which ``reproduces'' the values of the functions in $\cH$ in the sense
 that
 \begin{enumerate}
     \item[(i)] the function $\omega \mapsto K(\omega, \zeta) y$
     is in $\cH$ for each $\zeta \in \Omega$ and $y \in \cY$, and
     \item[(ii)] the reproducing formula
     $$
       \langle f, K(\cdot, \zeta) y \rangle_{\cH} = \langle
       f(\zeta), y \rangle_{\cY}
       $$
  holds for all $f \in \cH$, $\zeta \in \Omega$, and $y \in \cY$.
  \end{enumerate}
 An early thorough treatment of RKHSs (for
 the case $\cY = {\mathbb C}$) is the paper of Aronszajn
 \cite{aron}; a good recent treatment is in the book    
 \cite{AgMcC}, while the recent paper \cite{BV-formal}
 formulates more general settings (formal commuting or noncommuting
 variables).
     
\smallskip
     
 Given a pair of reproducing kernel Hilbert spaces $\cH(K_{1})$ and
 $\cH(K_2)$ where say $\cH(K_{1})$ consists of functions with values in
 $\cU$ and $\cH(K_2)$ consists of functions with values in $\cY$,  
 an object of much interest for operator theorists is the space of   
 multipliers $\cM(K_1, K_2)$ consisting of $\cL(\cU, \cY)$-valued
 functions $F$ on $\Omega$ with the property that the multiplication 
 operator
 $$
    M_{F} \colon f(\zeta) \mapsto F(\zeta) f(\zeta)
 $$
 maps $\cH(K_1)$ into $\cH(K_2)$.  The simple computation
 \begin{align*}
 \langle M_{F} f, K(\cdot, \zeta) y \rangle_{\cH(K_2)} &= \langle F(\zeta)
 f(\zeta), y \rangle_{\cY}\notag\\ &= \langle f(\zeta), F(\zeta)^{*} y  
 \rangle_{\cU}= \langle f, K_{1}(\cdot, \zeta) F(\zeta)^{*} y
 \rangle_{\cH(K_{1})}
 \end{align*}
 shows that  
 \begin{equation}   \label{ShieldsWallen}
    (M_{F})^{*} \colon K_2(\cdot, \zeta) y \mapsto K_{1}(\cdot, \zeta)
    F(\zeta)^{*} y.
 \end{equation}
Therefore
$$
\langle (I-M_FM_F^*)K_2(\cdot,\zeta)y, \, K_2(\cdot,\omega)y^\prime\rangle_{\cH(K_2)}=
\langle(K_2(\omega,\zeta)-F(z)K_1(\omega,\zeta)F(\zeta)^*)y, \, y^\prime\rangle_{\cY}
$$
which implies  that $F$ is a contractive multiplier from $\cH(K_1)$ to $\cH(K_2)$ if and 
only if
the kernel  $K_2(\omega,\zeta)-F(z)K_1(\omega,\zeta)F(\zeta)^*$ is positive on 
$\Omega\times\Omega$.  
Letting $K_1(\omega,\zeta)\equiv I_\cY$ and performing a rescaling leads to the following 
proposition 
\cite{beabur}.

\begin{proposition}
A function $F: \, \Omega\to \cY$ belongs to $\cH(K)$ with $\|F\|_{\cH(K)}\le \gamma$ if 
and only if
the kernel $K(\omega,\zeta)-\gamma^{-2}F(z)F(\zeta)^*$ is positive on 
$\Omega\times\Omega$.
\label{bourb}
\end{proposition}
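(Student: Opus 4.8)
The plan is to prove the two implications separately; both reduce to an application of the Cauchy--Schwarz inequality, respectively the Riesz representation theorem, on the linear span of the kernel functions $\{K(\cdot,\omega)y\}$, which is dense in $\cH(K)$. Throughout I would view $F(\zeta)\in\cY$ as an operator in $\cL(\C,\cY)$, so that $F(\omega)F(\zeta)^*$ is the rank-one operator $y\mapsto\langle y,F(\zeta)\rangle_\cY\,F(\omega)$ on $\cY$, in accordance with the discussion preceding the proposition (where one takes $K_1\equiv I_\C$).

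For the necessity direction, assume $F\in\cH(K)$ with $\|F\|_{\cH(K)}\le\gamma$. I would fix finitely many points $\omega_1,\dots,\omega_N\in\Omega$ and vectors $y_1,\dots,y_N\in\cY$ and set $g=\sum_{j=1}^N K(\cdot,\omega_j)y_j\in\cH(K)$. Using property (i) of the reproducing kernel and the reproducing formula (ii), one computes $\|g\|_{\cH(K)}^2=\sum_{i,j}\langle K(\omega_i,\omega_j)y_j,y_i\rangle_\cY$ and $\langle F,g\rangle_{\cH(K)}=\sum_i\langle F(\omega_i),y_i\rangle_\cY$. Then Cauchy--Schwarz, $|\langle F,g\rangle_{\cH(K)}|^2\le\|F\|_{\cH(K)}^2\|g\|_{\cH(K)}^2\le\gamma^2\|g\|_{\cH(K)}^2$, rearranges immediately into $\sum_{i,j}\langle(K(\omega_i,\omega_j)-\gamma^{-2}F(\omega_i)F(\omega_j)^*)y_j,y_i\rangle_\cY\ge0$, which is the asserted positivity.

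For sufficiency, assume the kernel $K(\omega,\zeta)-\gamma^{-2}F(\omega)F(\zeta)^*$ is positive. With $g$ as above, this positivity says exactly that $\gamma^{-2}|\sum_j\langle F(\omega_j),y_j\rangle_\cY|^2\le\|g\|_{\cH(K)}^2$. I would then define a linear functional on $\mathrm{span}\{K(\cdot,\omega)y\}\subseteq\cH(K)$ by $\Lambda(\sum_j K(\cdot,\omega_j)y_j):=\sum_j\langle y_j,F(\omega_j)\rangle_\cY$; the displayed inequality shows simultaneously that $\Lambda$ is well defined (any representation of the zero element is mapped to $0$) and that $|\Lambda(g)|\le\gamma\|g\|_{\cH(K)}$. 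Extending $\Lambda$ to all of $\cH(K)$ and applying Riesz representation yields $h\in\cH(K)$ with $\|h\|_{\cH(K)}\le\gamma$ and $\Lambda(g)=\langle g,h\rangle_{\cH(K)}$; testing against $g=K(\cdot,\zeta)y$ and using (ii) gives $\langle y,h(\zeta)\rangle_\cY=\langle y,F(\zeta)\rangle_\cY$ for all $y\in\cY$ and $\zeta\in\Omega$, so $h=F$, whence $F\in\cH(K)$ with $\|F\|_{\cH(K)}\le\gamma$. Equivalently, this direction can be read off the contractive-multiplier characterization stated just above with input space $\C$ and $K_1\equiv I_\C$, rescaling $F$ by $\gamma^{-1}$, or from Aronszajn's sum-of-kernels theorem applied to $K=(K-\gamma^{-2}FF^*)+\gamma^{-2}FF^*$, the RKHS of the rank-one kernel $\gamma^{-2}FF^*$ being $\C\cdot F$ with $\|F\|^2=\gamma^2$.

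The step I expect to need the most care is the well-definedness and boundedness of $\Lambda$ in the sufficiency direction: this is precisely where the positivity hypothesis on the full kernel is consumed, and it is what upgrades mere membership ``$F\in\cH(K)$'' to the quantitative bound $\|F\|_{\cH(K)}\le\gamma$. Everything else --- the density of $\mathrm{span}\{K(\cdot,\omega)y\}$ in $\cH(K)$, the two inner-product computations in the necessity direction, and the identification $h=F$ --- is routine RKHS bookkeeping (see, e.g., \cite{aron, AgMcC}).
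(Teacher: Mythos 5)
Your proof is correct and follows essentially the same route as the paper, which obtains the proposition by specializing the contractive-multiplier criterion derived just above it (from the identity for $\langle (I-M_FM_F^*)K(\cdot,\zeta)y,\,K(\cdot,\omega)y'\rangle$) to the constant kernel $K_1\equiv 1$ followed by a rescaling. Your Cauchy--Schwarz/Riesz argument is exactly the detailed form of that criterion in the rank-one case, and it correctly supplies the one point the paper's one-line derivation leaves implicit, namely that positivity of the kernel must first produce a well-defined bounded functional on the span of the $K(\cdot,\omega)y$ before $F$ can be identified as an element of $\cH(K)$ with $\|F\|_{\cH(K)}\le\gamma$.
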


\subsection{The Toeplitz operator characterization of $\cH(K)$}
A first example of a reproducing kernel Hilbert space is the
 Hardy space $H^{2}(\cY)$ of $\cY$-valued functions on the open unit disk $\D$
with square-summable sequence of Taylor coefficients. This space can be viewed as 
a RKHS with the Szeg\H{o} kernel tensored
 with the identity operator on $\cY$:  $k_{\rm Sz}(z,\zeta) I_{\cY}$ where
 $k_{\rm Sz}(z,\zeta) = \frac{1}{1 - z \overline{\zeta}}$.  The space of
 multipliers $\cM(k_{\rm Sz} I_{\cU}, k_{\rm Sz} I_{\cY})$ between
 two Hardy spaces can be identified with the space
 $H^{\infty}(\cU, \cY)$ of bounded analytic functions on
 ${\mathbb D}$ with values in $\cL(\cU, \cY)$ while the set of contractive multipliers is 
identified with the Schur class $\cS(\cU,\cY)$. Indeed, for $S \in \cS(\cU, \cY)$ and 
for any $f\in H^2(\cU)$, 
\begin{align}
\|Sf\|^2_{H^2(\cY)}&=\sup_{0<r<1}\frac{1}{2\pi}\int_{0}^{2\pi}\|S(re^{it})f(re^{it})\|^2_{\cY}dt
\notag\\
&\le 
\sup_{0<r<1}\frac{1}{2\pi}\int_{0}^{2\pi}\|S(e^{it})f(e^{it})\|^2_{\cY}dt=\|f\|^2_{H^2(\cU)}
\label{ineq}
\end{align}
which shows that $M_S$ is a contraction from $H^2(\cU)$ to $H^2(\cY)$. The general
complementation theory applied to the contractive operator $M_S$
provides the {\em characterization of $\cH(K_{S})$ as the operator range
\begin{equation}
{\cH}(K_{S})={\rm Ran}(I-M_SM^*_S)^{\frac{1}{2}}
\label{def2HS}
\end{equation}
with the lifted norm
\begin{equation}
\|(I-M_SM^*_S)^{\frac{1}{2}}f\|_{\cH(K_{S})}=\|(I-\pi)f \|_{H^2(\cY)}
\label{1.14u}
\end{equation}
for all $f\in H^2(\cY))$} where $\pi$ is the orthogonal
projection onto ${\rm Ker}(I-M_SM^*_S)^{\frac{1}{2}}$.
Upon setting $f=(I-M_SM^*_S)^{\frac{1}{2}}h$ in \eqref{1.14u} one gets 
\begin{equation}
\| (I - M_{S}M_{S}^{*})h\|_{\cH(K_{S})}=\langle (I - M_{S} M_{S}^{*})h, \,
h\rangle_{H^2(\cY)}.
\label{1.15u}
\end{equation}

\subsection{Reproducing kernel characterization of $\cH(S)$}

As a result of the general identity \eqref{ShieldsWallen}, 
 \begin{equation}  \label{ts*}
 M_{S}^{*} \colon k_{\rm Sz}(\cdot, \zeta) y \mapsto k_{\rm
        Sz}(\cdot, \zeta) S(\zeta)^{*} y
 \end{equation} 
  and hence 
\begin{equation}  \label{ts*1}
(I - M_{S} M_{S}^{*}) k_{\rm Sz}(z ,\zeta) y=K_S(z,\zeta)y
\end{equation}
where  
\begin{equation}
K_S(z,\zeta)=(I-S(z)S(\zeta)^*)k_{\rm Sz}(z ,\zeta)=
\frac{I_\cY-S(z)S(\zeta)^*}{1-z\bar{\zeta}}
\label{deBRkernel}
\end{equation}
is the {\em de Branges-Rovnyak kernel} associated to the given $S\in\cS(\cU,\cY)$.
Application of inequality \eqref{ineq} to $f = \sum_{j=1}^{N} k_{\rm Sz}(\cdot, w_{j})y_{j} \in
H^{2}(\cY)$ leads one, on account of \eqref{ts*1}, to 
$$
\sum_{i,j=1}^N \langle K_S(\omega_i,\omega_j)y_j, \, y_i\rangle_{\cY} \ge 0,
$$
and it follows that $K_{S}$ is a positive kernel on $\D\times \D$. 
Combining the characterization \eqref{def2HS} and equality \eqref{ts*1} one can see that 
$K_{S}(\cdot, \zeta) y \in \cH(S)$ for each $\zeta \in
  {\mathbb D}$ and $y \in \cY$, and also, for $f = (I - M_{S} M_{S}^{*}) f_{1} \in
  \cH(S)$,
  \begin{align*}
  \langle f, K_{S}(\cdot, \zeta) y \rangle_{\cH(S)}&  = \langle f, (I -
  M_{S} M_{S}^{*}) (k_{\rm Sz}(\cdot, \zeta) y \rangle_{\cH(S)} \\
 & = \langle f, k_{\rm Sz}(\cdot, \zeta) y \rangle_{H^{2}(\cY)}= \langle f(\zeta), y 
\rangle_{\cY}
  \end{align*}
  from which it follows that {\em $\cH(S)$ is a reproducing kernel 
  Hilbert space with reproducing kernel equal to $K_{S}(z,\zeta)$ 
  \eqref{deBRkernel}.}
 This characterization of $\cH(S)$ turns out to be quite convenient in 
interpolation and realization contexts. This section concludes by recording several useful 
facts 
concerning de Branges-Rovnyak spaces collected in the following theorem.
\begin{theorem}  \label{T:H(S)}
    If $S \in \cS(\cU, \cY)$, the space $\cH(S)$ has the following  
    properties:
\begin{enumerate}
    \item  $\cH(S)$ is a linear
space, indeed a reproducing kernel Hilbert space with reproducing  
kernel $K_{S}(z,w)$ given by   
$$
  K_{S}(z,w) = \frac{I - S(z) S(w)^{*}}{1 - z \overline{w}}.
$$
\item The space $\cH(S)$ is invariant under the backward-shift operator
\begin{equation}  \label{RS}
  R_{0} \colon f(z) \mapsto [f(z) - f(0)]/z
\end{equation}
and the following norm estimate holds:
\begin{equation}   \label{norm-est}
    \| R_{0} f\|^{2}_{\cH(S)} \le \| f \|^{2}_{\cH(S)} - \| f(0)
    \|^{2}_{\cY}.
\end{equation}
Moreover, equality holds in \eqref{norm-est} for all $f \in \cH(S)$
if and only if $\cH(S)$ has the property
$$
 S(z) \cdot u \in \cH(S) \Rightarrow S(z) \cdot u \equiv 0.
$$
\item
For any $u \in \cU$, the function $R_0(Su)$
is in $\cH(S)$.  If one lets $\tau \colon \cU \to \cH(S)$
denote the operator
\begin{equation}  \label{tau}
  \tau \colon u \mapsto R_0(Su)=\frac{S(z) - S(0)}{z} \,  u,
\end{equation}
then the adjoint $R_{0}^{*}$ of the operator $R_{0}$ \eqref{RS} on $\cH(S)$ is given by
\begin{equation}   \label{RS*}
  R_{0}^{*} \colon f(z) \mapsto z f(z) - S(z) \cdot \tau^{*}(f)
\end{equation}
with the following formula for the norm holding:
\begin{equation}   \label{RS*norm}
    \| R_{0}^{*} f\|^{2}_{\cH(S)} = \| f \|^{2}_{\cH(S)} - \| 
    \tau^{*}(f) \|^{2}_{\cU}.
\end{equation}

\item
Let  $\bU_{S}$ be the colligation matrix given by
\begin{equation}   \label{bUS}
 \bU_{S} = \begin{bmatrix}  A_{S} & B_{S} \\ C_{S} & D_{S}
\end{bmatrix}: = \begin{bmatrix}
     R_{0} & \tau \\ \be(0) & S(0) \end{bmatrix}  \colon
     \begin{bmatrix} \cH(S) \\ \cU \end{bmatrix} \to \begin{bmatrix}
         \cH(S) \\ \cY \end{bmatrix}
\end{equation}   
where $R_{0}$ and $\tau$ are given by \eqref{RS} and \eqref{tau} and
where $\be(0) \colon \cH(S) \to \cY$ is the evaluation-at-zero map:
$$
    \be(0) \colon f(z) \mapsto f(0).
$$
Then $\bU_{S}$ is coisometric, and one recovers $S(z)$ as the characteristic
function of $\bU_{S}$:
\begin{equation}   \label{realization}
  S(z) = D_{S} + z C_{S} (I - zA_{S})^{-1} B_{S}.
\end{equation}
\end{enumerate}
\end{theorem}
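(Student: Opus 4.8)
\emph{Plan of proof.} Part (1) needs nothing new: the development preceding the theorem already identifies $\cH(S)$ with the operator range ${\rm Ran}\,(I-M_SM_S^*)^{1/2}$ equipped with the lifted norm \eqref{1.14u}, shows that the kernel $K_S$ of \eqref{deBRkernel} is positive on $\D\times\D$, and verifies the reproducing formula $\langle f,K_S(\cdot,\zeta)y\rangle_{\cH(S)}=\langle f(\zeta),y\rangle_\cY$ for $f\in\cH(S)$ --- which is exactly assertion (1). For the remaining parts I would move freely between the description \eqref{def1HS}, the operator range \eqref{def2HS}, and the reproducing kernel model, using whichever is convenient.

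For the shift-invariance and norm estimate in (2), and for the assertion $R_0(Su)\in\cH(S)$ in (3), I would argue directly from the defining supremum \eqref{def1HS}, using only the two trivial identities $R_0M_z=I$ on $H^2$ and $M_zM_S=M_SM_z$. Together these give, for $f\in\cH(S)$, $u\in\cU$ and $g\in H^2(\cU)$,
$$R_0\bigl(f+M_SM_zg\bigr)=R_0f+M_Sg,\qquad R_0M_S(u+M_zg)=R_0(Su)+M_Sg.$$
Substituting the first into the elementary Hardy-space identity $\|R_0h\|^2_{H^2}=\|h\|^2_{H^2}-\|h(0)\|^2_\cY$, noting that $\bigl(f+M_SM_zg\bigr)(0)=f(0)$, and taking the supremum over $g$ (and comparing with \eqref{def1HS}) yields at once $R_0f\in\cH(S)$ and $\|R_0f\|^2_{\cH(S)}\le\|f\|^2_{\cH(S)}-\|f(0)\|^2_\cY$, i.e. \eqref{norm-est}. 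Substituting the second, together with $\|M_S\|\le1$, gives $\|R_0(Su)\|^2_{\cH(S)}\le\|u\|^2_\cU-\|S(0)u\|^2_\cY<\infty$, so $R_0(Su)\in\cH(S)$ and $\tau$ of \eqref{tau} is a contraction of $\cU$ into $\cH(S)$.

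For \eqref{RS*} and parts (3)--(4) I would pass to the reproducing kernel model. As $R_0$ is now known to be bounded on $\cH(S)$, its adjoint acts on kernel functions by the general rule $R_0^*\bigl(K_S(\cdot,w)y\bigr)=\bar w^{-1}\bigl(K_S(\cdot,w)-K_S(\cdot,0)\bigr)y$, while a short computation from \eqref{tau} gives $\tau^*\bigl(K_S(\cdot,w)y\bigr)=\bar w^{-1}\bigl(S(w)^*-S(0)^*\bigr)y$. Substituting \eqref{deBRkernel} and simplifying, one checks the identity $R_0^*\bigl(K_S(\cdot,w)y\bigr)(z)=zK_S(z,w)y-S(z)\,\tau^*\bigl(K_S(\cdot,w)y\bigr)$; since kernel functions span a dense subspace of $\cH(S)$ and both $f\mapsto R_0^*f$ and $f\mapsto zf(z)-S(z)\tau^*(f)$ are continuous as maps from $\cH(S)$ into $H^2(\cY)$ (in which $\cH(S)$ sits contractively), this extends to \eqref{RS*} for all $f\in\cH(S)$. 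Applying $R_0$ to the right-hand side of \eqref{RS*} and again testing on kernel functions gives $R_0R_0^*=I_{\cH(S)}-\tau\tau^*$, which is precisely \eqref{RS*norm} and also the $(1,1)$-block of $\bU_S\bU_S^*=I$. The $(2,2)$-block is the identity $\be(0)\be(0)^*+S(0)S(0)^*=K_S(0,0)+S(0)S(0)^*=I_\cY$, and the $(1,2)$-block $R_0\be(0)^*+\tau S(0)^*=0$ follows from $R_0\bigl(K_S(\cdot,0)y\bigr)=-\tau\bigl(S(0)^*y\bigr)$; hence $\bU_S$ is coisometric. The realization formula \eqref{realization} is then automatic: $\be(0)(I-zR_0)^{-1}$ is the evaluation $h\mapsto h(z)$ on $\cH(S)$, so $\be(0)(I-zR_0)^{-1}\tau u=(\tau u)(z)=z^{-1}\bigl(S(z)-S(0)\bigr)u$, whence $S(0)+z\,\be(0)(I-zR_0)^{-1}\tau u=S(z)u$.

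It remains to pin down the equality case in \eqref{norm-est}, and here I would invoke part (4). Equality in \eqref{norm-est} for every $f$ is the statement $R_0^*R_0+\be(0)^*\be(0)=I_{\cH(S)}$, i.e. that the $(1,1)$-entry of $I-\bU_S^*\bU_S$ vanishes; since $\bU_S$ is coisometric, $I-\bU_S^*\bU_S$ is the orthogonal projection onto $\ker\bU_S$, so this happens precisely when $\ker\bU_S\subseteq\{0\}\oplus\cU$. Inspection of \eqref{bUS} shows $\ker\bU_S=\bigl\{(-Su)\oplus u:\,u\in\cU,\ Su\in\cH(S)\bigr\}$: the relation $R_0f+R_0(Su)=0$ forces $f+Su$ to be a constant vector, while $f(0)+S(0)u=0$ forces that constant to be $0$, so $f=-Su\in\cH(S)$. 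Hence $\ker\bU_S\subseteq\{0\}\oplus\cU$ is equivalent to the stated condition that $S(z)\cdot u\in\cH(S)$ implies $S(z)\cdot u\equiv0$. The steps I expect to require the most care are the density-and-continuity extension of the kernel-function identities to all of $\cH(S)$ --- which is why the boundedness of $R_0$, i.e. part (2), must be in hand first --- and the correct identification of $\ker\bU_S$; the algebra, once arranged in this order, is routine.
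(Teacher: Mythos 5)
The paper merely \emph{records} Theorem \ref{T:H(S)} without proof (referring the reader to the companion survey and the classical sources), so there is no in-paper argument to compare against; judged on its own, your proof is correct and is essentially the classical de Branges--Rovnyak/Sarason argument. The supremum computation for part (2) and for $\tau u\in\cH(S)$, the kernel-function computation of $R_0^*$ and $\tau^*$ leading to $R_0R_0^*=I-\tau\tau^*$ and to the coisometry of $\bU_S$, and the identification of $\ker\bU_S$ with $\{(-Su)\oplus u:\ Su\in\cH(S)\}$ for the equality case in \eqref{norm-est} all check out, and the logical ordering (boundedness of $R_0$ before the density argument for \eqref{RS*}) is handled properly.
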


\section{de Branges-Rovnyak spaces and Schur-class interpolation}  
\label{S:interpolation}

This section will show how de Branges-Rovnyak spaces appear in a 
natural way in the context of Schur-class interpolation theory.
The main idea comes from the work of Katsneslon, Kheifets and 
Yuditskii \cite{kat1, kat, kky, kh, khyu}, and is closely connected with
Potapov's method of Fundamental Matrix Inequalities 
\cite{kat, khyu, kov1, kov2, kov3, kovpotap, potap1}.

\smallskip

The starting point is a relatively simple left-tangential operator-valued version of the 
classical Nevanlinna-Pick problem which consists of the following: {\em Given $n$ distinct points 
$z_1,\ldots,z_n\in\D$  and given  vectors $E_1,\ldots,E_n\in\cY$ and $N_1,\ldots, 
N_n\in\cU$,
find a Schur-class function  $S\in\cS(\cU,\cY)$  (if such exists) such that 
\begin{equation}
S(z_i)^*E_i=N_i\quad\mbox{for}\quad i=1,\ldots,n.
\label{5.1}
\end{equation}}
In what follows, $E_i^*$ and $N_i^*$ will be viewed as elements of $\cL(\cY,\C)$ and 
$\cL(\cU,\C)$, respectively. Upon multiplying both parts in \eqref{5.1} by $k_{\rm 
Sz}(\cdot, z_i)$
and making use of formula \eqref{ts*} one concludes that \eqref{5.1} can be written 
equivalently 
in terms of the Toeplitz operator $T_S$ as
$$
T_S^*: \; E_i(1-z\overline{z}_i)^{-1}\mapsto 
N_i(1-z\overline{z}_i)^{-1}\quad\mbox{for}\quad i=1,\ldots,n
$$
or equivalently, as the single condition 
\begin{equation}
T_S^*: \; \sum_{i=1}^n E_i(1-z\overline{z}_i)^{-1}x_i\mapsto 
\sum_{i=1}^nN_i(1-z\overline{z}_i)^{-1}x_i
\label{5.2}   
\end{equation}
holding for all $x_1,\ldots,x_n\in\C$.  Introduce the operators 
\begin{equation}
T=\begin{bmatrix}\overline{z}_1 & & 0\\ &\ddots & \\ 0 && 
\overline{z}_n\end{bmatrix},\quad
E=\begin{bmatrix}E_1 & \ldots & E_n\end{bmatrix},\quad
N=\begin{bmatrix}N_1 & \ldots & N_n\end{bmatrix}
\label{5.3}   
\end{equation}
and two observability operators $\cO_{E,T}: \, \C^n\to H^2(\cY)$ and $\cO_{N,T}: \, 
\C^n\to H^2(\cU)$ 
defined as
\begin{equation}
\cO_{E,T}: \, x\mapsto E(I-zT)^{-1}x\quad\mbox{and}\quad
\cO_{N,T}: \, x\mapsto N(I-zT)^{-1}x.
\label{5.4}
\end{equation}
It is readily seen from \eqref{5.3}, \eqref{5.4} that condition \eqref{5.2} can be 
equivalently written in 
the operator form as 
\begin{equation}
T_S^*\cO_{E,T}=\cO_{N,T}.
\label{5.5}
\end{equation}
In general  the pair of operators $(E,T)$ (where say 
$E \in \cL(\cX, \cY)$ and $T \in \cL(\cX)$) is  said to be {\em output stable} if 
the associated observability operator as in \eqref{5.4} maps $\cX$ 
into $H^{2}(\cY)$.
This discussion suggests the more general interpolation problem: 

\medskip

{\bf IP:} {\em Given Hilbert space operators 
$T\in\cL(\cX)$, $E\in\cL(\cX, \cY)$ and $N\in\cL(\cX,\cU)$ such that the pairs 
$(E,T)$ and $(N,T)$ are output-stable, find a Schur-class function  $S\in\cS(\cU,\cY)$ 
subject to interpolation condition \eqref{5.5}.}

\medskip

The Nevanlinna-Pick problem recalled above is a particular case of the problem 
{\bf IP} corresponding to $\cX=\C^n$ and to the special choice \eqref{5.3} of the 
operators 
$T$, $E$ and $N$. 

\smallskip

Observe that for an output-stable pair $(E,T)$ and for any $x\in\cX$,
$$
\|\cO_{E,T}x\|^2_{H^2(\cY)}-\|\cO_{E,T}Tx\|^2_{H^2(\cY)}=
\sum_{k=0}^\infty\|ET^kx\|^2_{\cY}-\sum_{k=0}^\infty\|ET^{k+1}x\|^2_{\cY}=\|Ex\|^2_{\cY}
$$
and similarly,
$$
\|\cO_{N,T}x\|^2_{H^2(\cU)}-\|\cO_{N,T}Tx\|^2_{H^2(\cU)}=\|Nx\|^2_{\cU}. 
$$
Define
\begin{equation} 
P:=\cO_{E,T}^*\cO_{E,T}-\cO_{N,T}^*\cO_{N,T}. 
\label{5.6}   
\end{equation}
Then it  follows that
\begin{align*}
\langle Px,\, x\rangle_{\cX}-\langle PTx,\, 
Tx\rangle_{\cX}=&\|\cO_{E,T}x\|^2_{H^2(\cY)}-\|\cO_{N,T}x\|^2_{H^2(\cU)}\\
&-\|\cO_{E,T}Tx\|^2_{H^2(\cY)}+\|\cO_{N,T}Tx\|^2_{H^2(\cU)}\\
=&\|Ex\|^2_{\cY}-\|Nx\|^2_{\cU}\quad\mbox{for all}\quad x\in\cX
\end{align*}
which can be written in operator form as 
\begin{equation}  
P-TPT^*=EE^*-NN^*.
\label{5.7}  
\end{equation}
The operator $P$ defined above from interpolation data is called {\em the Pick operator} 
of the
problem {\bf IP}. 
Observe, that in case \eqref{5.3} of the tangential Nevanlinna-Pick problem, $P$ admits 
the explicit
matrix formula
\begin{equation}
P=\left[\frac{\langle E_i, \, E_j\rangle_{\cY}-\langle N_i, \, N_j\rangle_{\cU}}{1-z_i
\overline{z}_j}\right]_{i,j=1}^n.
\label{5.8}
\end{equation}
If the problem {\bf IP} has a solution (say, $S\in\cS(\cU,\cY)$), then equality 
\eqref{5.5} holds for a 
contraction operator $T_S^*$ and therefore, 
$$
\|\cO_{N,T}x\|^2_{H^2(\cU)}=\|T_S^*\cO_{E,T}x\|^2_{H^2(\cU)}\le 
\|\cO_{E,T}x\|^2_{H^2(\cY)}\quad\mbox{for 
all}\quad x\in\cX
$$
which simply means that the Pick operator \eqref{5.6} is positive semidefinite. 
The necessity part of the next result follows from this discussion.

\begin{theorem}
The problem {\bf IP} has a solution if and only if its Pick matrix is positive 
semidefinite:
\begin{equation}
P:=\cO_{E,T}^*\cO_{E,T}-\cO_{N,T}^*\cO_{N,T}\ge 0.
\label{5.9}     
\end{equation}
\label{T:5.1}
\end{theorem}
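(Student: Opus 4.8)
The necessity of \eqref{5.9} is already contained in the discussion preceding the statement, so the plan is to prove sufficiency: given $P\ge 0$, I would manufacture a Schur-class solution of \textbf{IP} by a \emph{lurking isometry} (unitary colligation) argument. The starting point is the operator identity displayed just above \eqref{5.7}: since $\langle PTx,Tx\rangle_\cX=\|P^{1/2}Tx\|_\cX^2$, it reads
\[
\|P^{1/2}x\|_\cX^2+\|Nx\|_\cU^2=\|P^{1/2}Tx\|_\cX^2+\|Ex\|_\cY^2\qquad(x\in\cX).
\]
Because $P\ge0$, the square root $P^{1/2}$ is available, and the displayed equality together with polarization says precisely that the map
\[
V\colon\begin{bmatrix}P^{1/2}x\\ Nx\end{bmatrix}\longmapsto\begin{bmatrix}P^{1/2}Tx\\ Ex\end{bmatrix}
\]
is a well-defined isometry from $\cD:=\overline{\{(P^{1/2}x,Nx):x\in\cX\}}\subseteq\overline{{\rm Ran}\,P}\oplus\cU$ into $\overline{{\rm Ran}\,P}\oplus\cY$. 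This is the one and only place where positivity of the Pick operator is used.

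Next I would extend $V$ to a contraction $\bU=\begin{bmatrix}A&B\\ C&D\end{bmatrix}\colon\overline{{\rm Ran}\,P}\oplus\cU\to\overline{{\rm Ran}\,P}\oplus\cY$ (most simply by putting $\bU=0$ on $\cD^\perp$; if one wants $\bU$ unitary, first enlarge the coefficient spaces) and set $S(z):=D+zC(I-zA)^{-1}B$. This is well-defined on $\D$ since $\|A\|\le1$, and $S\in\cS(\cU,\cY)$ because $\bU$ is a contraction (the transfer-function direction of the realization in Theorem~\ref{T:H(S)}). Reading $\bU(P^{1/2}x,Nx)=(P^{1/2}Tx,Ex)$ and the relation $\bU^*(P^{1/2}Tx,Ex)=(P^{1/2}x,Nx)$ (valid since $\bU$ acts isometrically on $\cD$) in components yields the colligation identities
\[
AP^{1/2}+BN=P^{1/2}T,\quad CP^{1/2}+DN=E,\quad A^*P^{1/2}T+C^*E=P^{1/2},\quad B^*P^{1/2}T+D^*E=N
\]
of operators on $\cX$.

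It remains to check that $S$ satisfies the interpolation condition \eqref{5.5}; on $H^2$ the operator $T_S$ there is the multiplication operator $M_S$, and testing $M_S^*\cO_{E,T}=\cO_{N,T}$ against the monomials $z^ku$ (using $S(z)=\sum_{k\ge0}z^kS_k$ with $S_0=D$, $S_k=CA^{k-1}B$) shows that \eqref{5.5} is equivalent to the single operator identity $\sum_{j\ge0}S_j^*ET^j=N$ on $\cX$, i.e.\ to $D^*E+B^*\sum_{m\ge0}(A^*)^mC^*ET^{m+1}=N$. Now the third colligation identity gives $C^*E=P^{1/2}-A^*P^{1/2}T$, so the sum telescopes:
\[
\sum_{m=0}^{M}(A^*)^mC^*ET^{m+1}=\sum_{m=0}^{M}\bigl[(A^*)^mP^{1/2}T^{m+1}-(A^*)^{m+1}P^{1/2}T^{m+2}\bigr]=P^{1/2}T-(A^*)^{M+1}P^{1/2}T^{M+2}.
\]
Since $\|(A^*)^{M+1}P^{1/2}T^{M+2}x\|\le\|P^{1/2}T^{M+2}x\|\le\|\cO_{E,T}T^{M+2}x\|=\|R_0^{M+2}\cO_{E,T}x\|_{H^2}\to0$ — using $P\le\cO_{E,T}^*\cO_{E,T}$, the intertwining $\cO_{E,T}T=R_0\cO_{E,T}$, and the fact that $R_0$ tends to $0$ strongly on $H^2$ — the series converges to $P^{1/2}T$, and the required identity collapses to $D^*E+B^*P^{1/2}T=N$, which is exactly the fourth colligation identity. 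Hence \eqref{5.5} holds and the proof is complete.

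The conceptual heart is short: positivity of the Pick operator is exactly what makes the lurking map $V$ isometric. The genuine work is the passage from that matricial positivity to the analytic object $S$, carried out by the colligation construction; within it the one delicate point for \emph{general} output-stable data (where $T$ need not be a strict contraction) is the convergence of the resolvent series in the verification step, which is supplied by the decay $\|R_0^m\cO_{E,T}\|\to0$ coming from output-stability. An equivalent alternative route is to recast \eqref{5.9} as a Potapov Fundamental Matrix Inequality and use Proposition~\ref{bourb} to pass between the positivity of the resulting augmented kernel and the existence of the contractive multiplier $S$.
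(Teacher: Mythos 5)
Your proposal is correct and follows essentially the same route the paper takes: necessity from the contractivity of $T_S^*$ applied to \eqref{5.5}, and sufficiency via the isometry $V$ of \eqref{5.32} determined by the Stein identity (which is exactly where $P\ge 0$ enters), extended to a contractive/unitary colligation whose transfer function is the desired Schur-class interpolant — this is precisely the paper's construction \eqref{5.32}--\eqref{5.38}. The only divergence is that you verify the interpolation condition \eqref{5.5} directly by the telescoping computation with the tail controlled by output stability, where the paper defers that verification to \cite{kky}; your computation is sound.
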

\begin{remark}
{\rm Taking adjoints in \eqref{5.5} gives $\cO_{E,T}^*T_S=\cO_{N,T}^*$ where operators on 
both sides
map $H^2(\cU)$ into $H^2(\cY)$. Upon restricting this operator equality to the coefficient 
space $\cU$
(that is, to the space of constant functions in $H^2(\cU)$) one gets 
\begin{equation}
\cO_{E,T}^* M_S\vert_{\cU}=\cO_{N,T}^*\vert_{\cU}=N^*.
\label{5.9a}
\end{equation} 
The latter condition is a consequence of \eqref{5.5}. However, it can be equivalently used 
in the 
formulation of the {\bf IP} for the following reason:} if the pair $(E,T)$ is output 
stable and 
equality \eqref{5.9a} holds for a Schur-class function $S\in\cS(\cU,\cY)$, then the pair 
$(N,T)$ is also output stable (so that the observability operator $\cO_{N,T}$ maps $\cX$ 
into 
$H^2(\cU)$) and equality \eqref{5.5} holds.
\label{R:equ}
\end{remark}
At this point de Branges-Rovnyak spaces come into play, With any Schur-class 
function $S\in\cS(\cU,\cY)$, one can associate the linear map $F^S: \, \cX\to H^2(\cY)$ by 
the formula
\begin{equation}
F^S: \, x\mapsto \left(\cO_{E,T}-T_S\cO_{N,T}\right)x.
\label{5.10}
\end{equation}
If $S$ satisfies condition \eqref{5.5}, then 
$$
F^Sx=\left(\cO_{E,T}-T_ST_S^*\cO_{E,T}\right)x=\left(I-T_ST_S^*\right)\cO_{E,T}x
$$
and therefore $F^Sx$ belongs to $\cH(S)$ by characterization \eqref{def2HS}. Moreover,
 \begin{align*}
\|F^Sx\|^2_{\cH(S)}&=\left\langle (I-T_ST_S^*)\cO_{E,T}, \, 
\cO_{E,T}\right\rangle_{H^2(\cY)}\\
& = \langle (\cO_{E,T}^{*} \cO_{E,T} -
          \cO_{N,T}^{*}\cO_{N,T})x, x \rangle_{\cX}
           = \langle Px, x \rangle_{\cX} = \| P^{\frac{1}{2}} x\|_{\cX}^{2}.
\end{align*}
It has been shown that under the assumption \eqref{5.10},  
a function $S\in\cS(\cU,\cY)$ is a solution to the problem {\bf IP} only if the linear 
transformation 
\eqref{5.10} maps $\cX$ into $\cH(S)$ with equality $\|F^Sx\|^2_{\cH(S)}= \| 
P^{\frac{1}{2}} x\|_{\cX}$
for every $x\in\cX$. The converse ("if") statement was established in \cite{kky}. This and 
several other 
characterizations of solutions to the problem {\bf IP} are presented in the next theorem. 
In some statements, the function $S$ will not be assumed to be in the 
Schur class; consequently the notation 
$M_S: \, 
f\mapsto 
Sf$ rather than $T_S$ will be used for the operator of multiplication by $S$. 

\begin{theorem}
Assume that condition \eqref{5.9} is satisfied and let $F^S$ be defined
as in \eqref{5.10} (with $M_S$ instead of $T_S$) for a function $S: \, \D\to 
\cL(\cU,\cY)$.
The following are equivalent:
\begin{enumerate}
\item $S$ is a solution of  the problem {\bf IP}.
\item $S\in\cS(\cU,\cY)$ and the function $F^Sx$ belongs to 
$\cH(S)$ and satisfies
\begin{equation}
\|F^Sx\|_{\cH(S)}= \|P^{\frac{1}{2}}x\|_\cX \quad\mbox{for
every}\quad x\in\cX.
\label{5.12}
\end{equation}
\item $S\in\cS(\cU,\cY)$ and
the function $F^Sx$ belongs $\cH(S)$ and satisfies
\begin{equation}
\|F^Sx\|_{\cH(S)}\le \|P^{\frac{1}{2}}x\|_\cX \quad\mbox{for
every}\quad x\in\cX.
\label{5.13}
\end{equation}
\item The following kernel is positive in $\D\times\D$:
\begin{equation}
\mathbb K_S(z,\zeta)=
\begin{bmatrix}P & (I_\cX-\overline{\zeta}T^*)^{-1}(E^*-N^*S(\zeta)^*) \\
(E-S(z)N)(I_{\cX}-zT)^{-1}
 & {\displaystyle\frac{I_\cY -S(z)S(\zeta)^*}{1-z\overline{\zeta}}}\end{bmatrix}\succeq 0.
\label{5.14}  
\end{equation}
\item $F^S$ maps $\cX$ into $H^2(\cY)$ and the operator
\begin{equation}
{\bf P}:=\begin{bmatrix} P & (F^S)^* \\ F^S & I-M_SM_S^*\end{bmatrix}
\colon \; \begin{bmatrix}{\mathcal X} \\ H^2(\cY)\end{bmatrix}
\to  \begin{bmatrix}{\mathcal X} \\ H^2(\cY)\end{bmatrix}
\label{5.15}
\end{equation}  
is positive semidefinite.
\end{enumerate}
\label{T:5.3}
\end{theorem}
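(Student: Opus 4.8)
The plan is to prove the implications $(1)\Rightarrow(2)$, $(2)\Rightarrow(3)$, $(3)\Rightarrow(2)$ and $(2)\Rightarrow(1)$ — which gives the equivalence of $(1)$, $(2)$, $(3)$ — and then to attach $(4)$ and $(5)$ to $(3)$ by two Schur-complement arguments, one for reproducing kernels and one for Hilbert-space operators. Here $(1)\Rightarrow(2)$ is exactly the computation carried out just before the theorem, and $(2)\Rightarrow(3)$ is immediate. For $(3)\Rightarrow(2)$ I would go back to the original variational formula \eqref{def1HS}: since $F^Sx=\cO_{E,T}x-M_S\cO_{N,T}x$, testing the supremum in \eqref{def1HS} against the admissible element $\cO_{N,T}x\in H^2(\cU)$ yields
\[ \|F^Sx\|_{\cH(S)}^2\ \ge\ \|\cO_{E,T}x\|^2_{H^2(\cY)}-\|\cO_{N,T}x\|^2_{H^2(\cU)}\ =\ \langle Px,x\rangle_{\cX}, \]
and comparison with the reverse bound \eqref{5.13} forces equality, i.e.\ \eqref{5.12}.

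The core of the statement is $(2)\Rightarrow(1)$ (the converse direction, due to Katsnelson, Kheifets and Yuditskii). The key observation is that \eqref{5.12} says precisely that the concave, Fr\'echet-differentiable functional $\Phi(g):=\|F^Sx+M_Sg\|^2_{H^2(\cY)}-\|g\|^2_{H^2(\cU)}$ on $H^2(\cU)$ attains its supremum $\|F^Sx\|^2_{\cH(S)}$ at the particular point $g_0=\cO_{N,T}x$, because $\Phi(g_0)=\|\cO_{E,T}x\|^2-\|\cO_{N,T}x\|^2=\langle Px,x\rangle_{\cX}$. Since $\|M_S\|\le 1$ makes $\Phi$ concave, a maximum is a critical point, so the gradient of $\Phi$ vanishes at $g_0$, that is $M_S^*(F^Sx+M_Sg_0)=g_0$; substituting $F^Sx+M_Sg_0=\cO_{E,T}x$ and $g_0=\cO_{N,T}x$ gives $M_S^*\cO_{E,T}x=\cO_{N,T}x$ for every $x\in\cX$, which is \eqref{5.5}. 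This is the step I expect to be the main obstacle: one has to check that the supremum in \eqref{def1HS} is genuinely attained at $g_0$ (not merely approached) and that the first-order stationarity condition at a concave maximum over a Hilbert space is legitimate — and it is exactly here that contractivity of $M_S$ is used in an essential way.

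For $(3)\Leftrightarrow(4)$ I would read off the block structure of the kernel \eqref{5.14}: its $(1,1)$ entry is the constant positive operator $P$, its $(2,2)$ entry is the de Branges-Rovnyak kernel $K_S$, which is the reproducing kernel of $\cH(S)$, and its $(2,1)$ entry applied to $x\in\cX$ is the function value $(F^Sx)(z)=(E-S(z)N)(I-zT)^{-1}x$. Hence $\mathbb K_S\succeq 0$ is, by the operator-valued refinement of Proposition \ref{bourb} (a Schur-complement lemma for positive kernels), equivalent to the assertion that $F^Sx\in\cH(S)$ with $\|F^Sx\|^2_{\cH(S)}\le\langle Px,x\rangle_{\cX}$ for every $x$ — that is, to \eqref{5.13} — once one notes that positivity of the $(2,2)$ block by itself already forces $S$ to be a contractive multiplier, hence $S\in\cS(\cU,\cY)$. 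The only care required is to phrase the Schur-complement lemma so that it tolerates a non-injective $P$, realizing $F^S$ as $\Gamma P^{1/2}$ for a contraction $\Gamma\colon\overline{{\rm Ran}\,P^{1/2}}\to\cH(S)$.

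Finally, $(3)\Leftrightarrow(5)$ is the same argument one level higher, applied to the block operator \eqref{5.15}. A standard Schur-complement factorization gives that ${\bf P}\succeq 0$ if and only if $I-M_SM_S^*\ge 0$ (which, as above, forces $S\in\cS(\cU,\cY)$) together with a factorization $F^S=(I-M_SM_S^*)^{1/2}\,K\,P^{1/2}$ through a contraction $K$. By the operator-range model \eqref{def2HS} of $\cH(S)$ and the lifted-norm identity \eqref{1.14u}, the existence of such a $K$ is precisely the statement that each $F^Sx$ lies in ${\rm Ran}(I-M_SM_S^*)^{1/2}=\cH(S)$ with $\|F^Sx\|_{\cH(S)}\le\|P^{1/2}x\|_{\cX}$, i.e.\ \eqref{5.13}, and then $F^S$ automatically maps $\cX$ into $H^2(\cY)$. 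Apart from invoking the lifted-norm identity, this last equivalence is routine bookkeeping.
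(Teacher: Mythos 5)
Your proposal is correct, and it closes the logical circle by a genuinely different route than the paper. The paper proves the single cycle $(1)\Rightarrow(2)\Rightarrow(3)\Rightarrow(4)\Rightarrow(5)\Rightarrow(1)$: the step $(4)\Rightarrow(5)$ is done directly, by evaluating the quadratic form of ${\bf P}$ on the dense set of vectors built from Szeg\H{o} kernel functions, and the decisive step $(5)\Rightarrow(1)$ is an operator-theoretic computation that embeds ${\bf P}$ as a Schur complement in a $3\times 3$ block operator $\widehat{\bf P}$ with corner $I_{H^2(\cU)}$ and then takes the Schur complement of the opposite corner, producing a positive block matrix with a zero diagonal entry and hence forcing the off-diagonal identity $\cO_{E,T}^*T_S=\cO_{N,T}^*$. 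You instead close the loop at the level of $(2)\Rightarrow(1)$ by a first-order optimality argument: since the de Branges--Rovnyak norm \eqref{def1HS} is a supremum of the functional $\Phi(g)=\|F^Sx+Sg\|^2-\|g\|^2$ and condition \eqref{5.12} says this supremum is attained at $g_0=\cO_{N,T}x$, the vanishing gradient at $g_0$ yields exactly $M_S^*\cO_{E,T}x=\cO_{N,T}x$, i.e.\ \eqref{5.5}. (A small quibble: concavity of $\Phi$ is not what makes a maximizer a critical point --- that holds for any Fr\'echet-differentiable functional on a vector space --- but the argument is correct as it stands.) Your extra step $(3)\Rightarrow(2)$, obtained by testing the supremum in \eqref{def1HS} at the same $g_0$, is also sound and is not needed in the paper's arrangement. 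Your $(3)\Leftrightarrow(4)$ and $(3)\Leftrightarrow(5)$ via kernel- and operator-level Schur complements (with the Douglas-factorization care you flag for non-injective $P$) recover $(4)\Leftrightarrow(5)$ indirectly rather than through the paper's density identity; like the paper's versions, these two equivalences use only $P\ge 0$ and not the special form \eqref{5.6}, so your arrangement is equally compatible with the later passage to the {\bf AIP}. What each approach buys: the paper's $3\times 3$ Schur-complement trick is purely algebraic and generalizes verbatim to the multiplier setting of \cite{bol}, while your variational argument is more elementary, makes transparent exactly where the special form of $P$ (through the identity $\Phi(g_0)=\langle Px,x\rangle$) and the contractivity of $M_S$ enter, and is in the spirit of the original complementation definition of $\cH(S)$.
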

\begin{proof} A brief sketch will be given. Implication $(1)\Rightarrow(2)$ was 
demonstrated above.
Implication $(2)\Rightarrow(3)$ is trivial. Implication $(3) 
\Rightarrow (4)$ follows from Proposition \ref{bourb}. Implication 
$(4) \Rightarrow (5)$ follow from the identity
        $$
        \left\langle {\bf P}f, \; f\right\rangle_{{\mathcal
        X}\oplus H^2(\cY)}
        =\sum_{j,\ell=1}^r \left\langle{\mathbb K_S}(z_j,z_\ell)
        \left[\begin{array}{c} x_\ell \\ y_\ell \end{array}\right], \;
        \left[\begin{array}{c} x_j \\ y_j \end{array}\right]
        \right\rangle_{{\mathcal X}\oplus\cY}
        $$
holding for every vector $f\in{\mathcal 
        X}\oplus H^2(\cY)$ of the form
        $$
        f=\sum_{j=1}^r\left[\begin{array}{c} x_j\\
        k_{\rm Sz}(\cdot \, , \, z_j)y_j \end{array}\right]\qquad
        (x_j\in{\mathcal X}, \; y_j\in\cY, \; z_j\in\D).
        $$
For implication $(5) \Rightarrow (1)$, first observe that since 
$I-M_SM_S^*$ is positive semidefinite (equivalently,
$M_S$ is a contraction) then $S\in\cS(\cU,\cY)$ and $M_S=T_S$.
By  definitions \eqref{5.6} and \eqref{5.10},
$$
{\bf P}=\begin{bmatrix} {\mathcal O}_{E,T}^*{\mathcal O}_{E,T}-
{\mathcal O}_{N,T}^*{\mathcal O}_{N,T} & {\mathcal O}_{E,T}^*-
{\mathcal O}_{N,T}^*T_S^*  \\ {\mathcal O}_{E,T}-T_S{\mathcal O}_{N,T} &
       I-T_ST_S^*\end{bmatrix}\ge 0.
       $$
       By the standard Schur complement argument, the latter
       inequality is equivalent to
       $$
       \widehat {\bf P}:=\begin{bmatrix}
       I_{H^2(\cU)} & {\mathcal O}_{N,T} & T_S^*\\
       {\mathcal O}_{N,T}^*
        & {\mathcal O}_{E,T}^*{\mathcal O}_{E,T} & {\mathcal O}_{E,T}^*
       \\ T_S & {\mathcal O}_{E, \bT} & I_{H^2(\cY)}\end{bmatrix}\ge 0,
       $$
       since ${\bf P}$ is the Schur complement of the block
       $I_{H^2(\cU}$ 
       in $\widehat {\bf P}$. On the other hand, the latter inequality
       holds if and only if the Schur
       complement of the block $I_{H^2(\cY)}$ in $\widehat {\bf P}$
       is positive semidefinite:
$$
       \begin{bmatrix} I_{H^2(\cU)} & {\mathcal O}_{N,T}\\
       {\mathcal O}_{N,T}^* & {\mathcal O}_{E,T}^*{\mathcal O}_{E,T}
       \end{bmatrix}-\begin{bmatrix} T_S^* \\ {\mathcal O}_{E,T}^*
       \end{bmatrix}\begin{bmatrix} T_S& {\mathcal O}_{E,T}\end{bmatrix}\ge 0.
$$
       One can write the latter inequality as
       $$
       \begin{bmatrix} I_{H^2(\cU)}-M_S^*M_S & {\mathcal O}_{N,T}-T_S^*{\mathcal 
O}_{E,T}\\ 
{\mathcal O}_{N,T}^*-{\mathcal O}_{E,T}^*T_S & 0\end{bmatrix}\ge
       0
       $$
       and arrive at  ${\mathcal O}_{E,T}^*T_S={\mathcal O}_{N,T}$
       which means that $S$ is a solution of {\bf IP}.
\end{proof}
It can be shown that Theorem \ref{T:5.3} holds in a more general setting of contractive 
multipliers from one reproducing kernel Hilbert space into another \cite{bol}.

\subsection{V.P. Potapov's method of Fundamental Matrix Inequalities} Theorem \ref{T:5.3}
originates in the approach suggested by V. P. Potapov in early 1970s 
and developed later by his collaborators and followers. The method consisted of three 
parts:
given an interpolation problem, 
\begin{enumerate}
\item establish the solvability criterion in terms of the Pick operator  $P$ of the 
problem
 and establish the identity (the "fundamental identity" in Potapov's terminology) 
satisfied by this $P$;
\item characterize all solutions $S$ to the problem in terms of the "fundamental matrix 
inequality" 
${\bf K}=\sbm{P & \star \\ \star &\star}\ge 0$ where ${\bf K}$ is certain structured 
matrix depending on the 
unknown 
function $S$ and having $P$ as a diagonal block;
\item describe all solutions $S$ of the inequality ${\bf K}\ge 0$ using factorization 
methods.
\end{enumerate}
One of the main reasons to develop this method was that in the 
completely indeterminate case (where $P$ is strictly
positive definite), the operator-valued problem can be  settled in 
much the same way as in the scalar-valued case.
The method was tested on a number of classical interpolation problems \cite{dub1, kat1, 
kov1, kov2, kovpotap, 
potap1} and then was largely unified and extended in \cite{kky} (see 
also \cite{khyu}).  Problem {\bf IP} can be used to illustrate 
Potapov's method as follows. 
The solvability criterion is given in \eqref{5.9} in terms of $P$ which 
satisfies the "fundamental identity" \eqref{5.7}. The next step is presented in the 
theorem below.

 \begin{theorem}
Let $P$ be defined as in \eqref{5.9}.
A function $S: \, \D\mapsto \cL(\cU,\cY)$ is a solution to the problem {\bf IP} if and 
only if
it is analytic on $\D$ and the following matrix is positive semidefinite for all $z\in\D$:
\begin{equation}
\begin{bmatrix}P & (I_\cX-\overline{z}T^*)^{-1}(E^*-N^*S(z)^*) \\
(E-S(z)N)(I_{\cX}-zT)^{-1}
 & {\displaystyle\frac{I_\cY -S(z)S(z)^*}{1-|z|^2}}\end{bmatrix}\ge 0.
\label{5.16}
\end{equation}
\label{T:5.2}  
\end{theorem}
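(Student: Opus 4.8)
The plan is to derive Theorem \ref{T:5.2} as a pointwise ``diagonalization'' of the operator-valued statement in Theorem \ref{T:5.3}, specifically the equivalence $(1) \Leftrightarrow (4)$. The key observation is that the kernel $\mathbb K_S(z,\zeta)$ in \eqref{5.14} has, on its lower-right block, the de Branges--Rovnyak kernel $K_S(z,\zeta)$, and that by the same positive-kernel mechanism used in Section \ref{S:RKHS}, positivity of a $2\times 2$ structured kernel of this form is captured by testing against finitely many points. So the first step is to recognize that the matrix \eqref{5.16} is exactly the diagonal ``slice'' $\mathbb K_S(z,z)$ of the kernel \eqref{5.14}. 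Hence positivity of \eqref{5.16} for all $z \in \D$ is a \emph{necessary} condition for positivity of $\mathbb K_S$ on $\D \times \D$, which by Theorem \ref{T:5.3}$(4)\Rightarrow(1)$ already gives one direction once we know $S$ is a solution — but we must be careful, since $\mathbb K_S \succeq 0$ on $\D\times\D$ is genuinely stronger than positivity of all its diagonal blocks.

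The real content, therefore, is the reverse implication: if $S$ is analytic on $\D$ and \eqref{5.16} holds pointwise, then $S$ solves {\bf IP}. Here I would \textbf{not} try to go back to the full kernel; instead I would run a direct Schur-complement/multiplier argument at each fixed $z$ and then patch. Concretely: positivity of \eqref{5.16} forces the lower-right block $K_S(z,z) = (I - S(z)S(z)^*)/(1-|z|^2) \succeq 0$, so $\|S(z)\| \le 1$ for every $z\in\D$, giving $S \in \cS(\cU,\cY)$ (analyticity is in the hypothesis). Once $S$ is Schur, $M_S = T_S$ is a contraction and everything from Theorem \ref{T:H(S)} and the complementation theory of \eqref{def2HS} is available. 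Then the plan is to show that pointwise positivity of \eqref{5.16} is equivalent to positivity of the kernel \eqref{5.14}: one direction is the diagonal restriction just mentioned; for the other, use that the off-diagonal entries of $\mathbb K_S$ are built from $(I - zT)^{-1}$ and $S$, both analytic, so the whole kernel $\mathbb K_S(z,\zeta)$ is holomorphic in $z$ and anti-holomorphic in $\zeta$, and a kernel of that regularity which is positive semidefinite on the diagonal is positive on $\D\times\D$ — this is the ``holomorphic kernel'' principle (a Hermitian kernel $L(z,\zeta)$ holomorphic in $z$, anti-holomorphic in $\zeta$, with $L(z,z)\ge 0$ need not in general be positive, but here $\mathbb K_S$ is of the special Pick/Nevanlinna form that \emph{does} have this property, essentially because $\mathbb K_S(z,\zeta) = \cO_{E,T}$-type Gram data plus the reproducing kernel $K_S$, all of which are bona fide Gram matrices of analytic families). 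Invoking Theorem \ref{T:5.3}$(4)\Rightarrow(1)$ then closes the loop.

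An alternative, and perhaps cleaner, route that I would keep in reserve: prove $(5.16)\text{ for all }z \Rightarrow \eqref{5.15}\succeq 0$ directly. Fix $z \in \D$ and a vector $\sbm{x \\ y} \in \cX \oplus \cY$; pairing \eqref{5.16} against it and integrating, or rather summing over a finite family $z_1,\dots,z_r$ with vectors $x_j, y_j$, reproduces exactly the quadratic form $\langle {\bf P} f, f\rangle$ for $f = \sum_j \sbm{x_j \\ k_{\rm Sz}(\cdot,z_j)y_j}$, by the very identity displayed in the proof of Theorem \ref{T:5.3} (implication $(4)\Rightarrow(5)$). Since such $f$ are dense in $\cX \oplus H^2(\cY)$, we get ${\bf P}\succeq 0$, i.e.\ statement $(5)$, and then $(5)\Rightarrow(1)$ from Theorem \ref{T:5.3} finishes. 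This makes the whole theorem a corollary of Theorem \ref{T:5.3} together with the density of the special vectors $f$, and sidesteps any delicate holomorphic-kernel lemma.

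\textbf{The main obstacle} I anticipate is precisely the passage from the diagonal condition \eqref{5.16} (a condition ``for each $z$'' on a finite matrix) to the off-diagonal kernel condition \eqref{5.14} (or equivalently to ${\bf P}\succeq 0$): one must justify that no information is lost, i.e.\ that the special structure of $\mathbb K_S$ — with the Szeg\H{o}-type factors $(I-zT)^{-1}$, $(I-\overline\zeta T^*)^{-1}$ and the contractive analytic $S$ — upgrades diagonal positivity to full positivity. The reserve route above resolves this by never separating the variables: it reads \eqref{5.16} as a statement about $\langle {\bf P} f, f\rangle$ for the dense test family directly, so the polarization is built in. I would present the theorem via that route, citing the identity already established in the proof of Theorem \ref{T:5.3}, and remark that \eqref{5.16} is merely the ``Fundamental Matrix Inequality'' repackaging of the kernel condition \eqref{5.14}.
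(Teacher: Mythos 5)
Your treatment of the direction ``$S$ solves {\bf IP} $\Rightarrow$ \eqref{5.16}'' is fine: it is the restriction of the kernel condition \eqref{5.14} to the diagonal $\zeta=z$, exactly as the paper observes, and your observation that the lower-right block of \eqref{5.16} forces $S\in\cS(\cU,\cY)$ is also correct. The genuine gap is in the converse, which is the entire content of the theorem, and neither of your two routes closes it. Your primary route asserts that $\mathbb K_S(z,\zeta)$ is of a ``special Pick/Nevanlinna form'' for which diagonal positivity upgrades to kernel positivity because its blocks are ``Gram matrices of analytic families''; but the full $2\times 2$ block kernel built from $P$, $(E-S(z)N)(I-zT)^{-1}$ and $K_S$ is a Gram kernel precisely when $F^S$ maps $\cX$ contractively into $\cH(S)$, which by Theorem \ref{T:5.3} is \emph{equivalent} to $S$ being a solution. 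The ``special structure'' you invoke is therefore the conclusion, and the argument is circular. Your reserve route contains an outright error: the identity displayed in the proof of Theorem \ref{T:5.3} expresses $\langle \mathbf P f, f\rangle$ as the \emph{double} sum $\sum_{j,\ell=1}^r \bigl\langle \mathbb K_S(z_j,z_\ell)\sbm{x_\ell\\ y_\ell}, \sbm{x_j\\ y_j}\bigr\rangle$, which involves the off-diagonal values $\mathbb K_S(z_j,z_\ell)$ with $j\ne\ell$. The hypothesis \eqref{5.16} controls only the terms with $j=\ell$ (equivalently, the case $r=1$), and no summation or integration of the pointwise inequalities reproduces the cross terms; polarization recovers the off-diagonal values of a sesquianalytic kernel but does not transport positivity. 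So \eqref{5.16} for all $z$ does not, by these means, yield positivity of the operator \eqref{5.15}.

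The paper itself flags exactly this point: it states that condition \eqref{5.14} is strictly stronger-looking than \eqref{5.16}, that the hard direction is ``much trickier,'' that the interpolation conditions are extracted from \eqref{5.16} by a special transformation of the inequality due to Katsnelson--Kheifets--Yuditskii, and it only cites the proof (from \cite[Section 3]{bd1}, in the finite-dimensional case) rather than reproducing it. Any correct argument must exploit more than positivity of the $2\times2$ matrix at each point separately --- for instance the KKY transformation, or a Schwarz--Pick/Julia-type argument converting the pointwise Schur-complement inequality $K_S(z,z)\succeq F^S(z)P^{-1}F^S(z)^*$ into the statement that each $F^Sx$ lies in $\cH(S)$ with the required norm bound --- and your proposal supplies no such mechanism.
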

The proof (for the case where $\cX$, $\cY$ and $\cU$ are all finite dimensional) can be 
found in
\cite[Section 3]{bd1}. The "if" part is a fairly straightforward consequence of the 
Schwarz-Pick
inequality (of course, this part  follows also from Theorem \ref{T:5.3}, since the matrix 
in \eqref{5.16}
is nothing else but $\mathbb K_S(z,z)$ and therefore condition \eqref{5.14} is stronger 
than 
\eqref{5.16}). The "only if" part is much trickier. Interpolation conditions are derived
from \eqref{5.16} using a special transformation of the latter inequality suggested first 
in 
\cite{kky} (see also \cite{kat} for a related survey). Further developments showed that 
it is much more convenient to work with positive kernels rather than positive semidefinite 
matrices.
Besides, as one can see from Theorem \ref{T:5.3}, the ``kernel" setting makes connections 
between 
Nevanlinna-Pick type interpolation problems and de Branges-Rovnyak spaces more 
transparent. 

\subsection{The analytic Abstract Interpolation Problem} The very formulation of the 
problem 
{\bf IP} requires that the observability operators $\cO_{E,T}$ and $\cO_{N,T}$ be bounded 
from $\cX$ into 
$H^2(\cY)$ and $H^2(\cU)$ respectively. Besides, the special form \eqref{5.6} of the 
operator $P$ is essential  
for proving implication $(5) \Rightarrow(1)$ in Theorem \ref{T:5.3}. However, upon close 
inspection, one can 
see 
that the equivalences $(3) \Leftrightarrow(4) \Leftrightarrow(5)$ in Theorem \ref{T:5.3} 
survive under
weaker assumptions that $P$ is any positive semidefinite operator on
$\cX$ and that 
\begin{itemize}
\item[(a)] {\em The function $\begin{bmatrix}E \\  N\end{bmatrix}(I-zT)^{-1}x$ is 
holomorphic on 
$\D$ for each $x\in\cX$}. 
\end{itemize}
For reasons explained below, it should also be  required that 
\begin{itemize} 
\item[(b)] {\em $P$ is a positive semidefinite solution to the Stein equation 
\eqref{5.7}},
\end{itemize}
and formulate the Abstract Interpolation Problem as follows:

\medskip

{\bf AIP:}  {\em Given the data $\{E,N,T,P\}$ subject to assumptions {\rm (a), (b)}, find 
all 
$S\in\cS(\cU,\cY)$ 
such that for every $x\in\cX$, the function 
\begin{equation}
(F^Sx)(z)=(E-S(z)N)(I_\cX-zT)^{-1}x
\label{5.17}
\end{equation}
belongs to the de Branges-Rovnyak space $\cH(S)$ and satisfies the norm
constraint $\|F^Sx\|_{\cH(S)}\le \|P^{\frac{1}{2}}x\|_\cX$.}

\medskip

The latter problem is a left-tangential adaptation of the more general bi-tangential 
Abstract Interpolation Problem
formulated in \cite{kky} (see also \cite{kh} for an overview)
in terms of a more elaborate two-component 
version of the de Branges-Rovnyak space 
$\widetilde{\mathcal D}(S)$ (a good reference for the formulation of 
this two-component space is \cite{NV1} as well as the survey article 
companion to this one \cite{bbsur1}). The present survey does not 
treat this more general interpolation problem. 

The next result can be arrived at via a 
careful inspection of the proof of Theorem \ref{T:5.3}.

\begin{theorem}
Let $P$, $T$, $E$ and $N$ satisfy assumptions {\rm (a), (b)}. Then a function 
$S: \, \D\to\cL(\cU,\cY)$ is a solution of the  {\bf AIP} if and only if 
the kernel ${\mathbb K}_S(z,\zeta)$ of the form \eqref{5.14} is positive on $\D\times \D$.
\label{T:5.4}
\end{theorem}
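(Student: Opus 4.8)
The plan is to reprove, in this more general setting, the equivalence $(3)\Leftrightarrow(4)$ from Theorem~\ref{T:5.3}, since by construction a function $S$ is a solution of the {\bf AIP} precisely when it satisfies statement~(3) there, namely $S\in\cS(\cU,\cY)$ together with $F^Sx\in\cH(S)$ and $\|F^Sx\|_{\cH(S)}\le\|P^{\frac12}x\|_\cX$ for all $x\in\cX$ (with $F^S$ as in \eqref{5.17}), while statement~(4) is exactly positivity of $\mathbb K_S$. The whole argument should go through once one checks that the only inputs used in $(3)\Leftrightarrow(4)$ are that $P\ge 0$ and that, for each $x$, the function $(F^Sx)(z)=(E-S(z)N)(I_\cX-zT)^{-1}x$ is holomorphic on $\D$ — the latter being assumption~(a) as soon as $S$ is known to be holomorphic. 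Throughout I write $L(z):=(E-S(z)N)(I_\cX-zT)^{-1}\in\cL(\cX,\cY)$, so that $(F^Sx)(z)=L(z)x$; the $(2,1)$, $(1,2)$ and $(2,2)$ entries of $\mathbb K_S$ in \eqref{5.14} are then $L(z)$, $L(\zeta)^{*}$ and $K_S(z,\zeta)$, respectively.

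For the ``only if'' direction I would argue as in the $(3)\Rightarrow(4)$ step (attributed in the text to Proposition~\ref{bourb}): if $S$ solves the {\bf AIP}, then $S\in\cS(\cU,\cY)$ makes $K_S$ a positive kernel, and a general test vector for $\mathbb K_S$ consists of a ``base point'' carrying a vector $x_0\in\cX$ together with points $z_1,\dots,z_N\in\D$ carrying vectors $y_1,\dots,y_N\in\cY$. Setting $g=\sum_{i=1}^N K_S(\cdot,z_i)y_i\in\cH(S)$ and using the reproducing property to identify $\sum_i\langle L(z_i)x_0,y_i\rangle_\cY$ with $\langle F^Sx_0,g\rangle_{\cH(S)}$, the associated quadratic form equals $\langle Px_0,x_0\rangle_\cX+2\,\mathrm{Re}\,\langle F^Sx_0,g\rangle_{\cH(S)}+\|g\|^2_{\cH(S)}$, which is $\ge\|F^Sx_0\|^2_{\cH(S)}+2\,\mathrm{Re}\,\langle F^Sx_0,g\rangle_{\cH(S)}+\|g\|^2_{\cH(S)}=\|F^Sx_0+g\|^2_{\cH(S)}\ge 0$ by the norm constraint (the cases $x_0=0$ and $g=0$ being covered by the same formula). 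Hence $\mathbb K_S\succeq 0$.

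For the ``if'' direction I would first manufacture membership in the Schur class: positivity of the $(2,2)$-block of $\mathbb K_S$ says $K_S$ is a positive kernel on $\D\times\D$, and the contractive-multiplier characterization recalled before Proposition~\ref{bourb} then shows that $S$ is holomorphic and that $M_S$ is a contraction from $H^2(\cU)$ to $H^2(\cY)$, i.e.\ $S\in\cS(\cU,\cY)$; assumption~(a) now gives that each $F^Sx$ is holomorphic on $\D$. Next, fixing $x_0\in\cX$ and running the standard discriminant/rotation argument ($y_i\mapsto te^{i\theta}y_i$, $t\in\mathbb R$) in the positivity of $\mathbb K_S$ yields $\big|\sum_i\langle L(z_i)x_0,y_i\rangle\big|^2\le\langle Px_0,x_0\rangle\,\|g\|^2_{\cH(S)}$ for every $g=\sum_i K_S(\cdot,z_i)y_i$; thus $g\mapsto\sum_i\langle L(z_i)x_0,y_i\rangle$ is a well-defined bounded conjugate-linear functional of norm $\le\|P^{\frac12}x_0\|_\cX$ on the dense span of the kernel functions in $\cH(S)$. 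By the Riesz representation theorem it equals $g\mapsto\langle h,g\rangle_{\cH(S)}$ for some $h\in\cH(S)$ with $\|h\|_{\cH(S)}\le\|P^{\frac12}x_0\|_\cX$, and testing against $g=K_S(\cdot,\zeta)y$ forces $h(\zeta)=L(\zeta)x_0=(F^Sx_0)(\zeta)$ for all $\zeta\in\D$, so $h=F^Sx_0$. Therefore $F^Sx_0\in\cH(S)$ with $\|F^Sx_0\|_{\cH(S)}\le\|P^{\frac12}x_0\|_\cX$, which is the defining condition of the {\bf AIP}. (This last step is precisely the operator-valued, one-parameter-family version of Proposition~\ref{bourb}.)

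The main obstacle — and the only genuine departure from Theorem~\ref{T:5.3} — is that here $S\in\cS(\cU,\cY)$ and the holomorphy of $F^Sx$ cannot be assumed but must be extracted from the kernel condition, which is why the contractive-multiplier characterization (including the fact that positivity of the Szeg\H{o}-type kernel $K_S$ already forces $S$ to be holomorphic) is essential in the ``if'' direction. By contrast, neither assumption~(b) nor the special form $P=\cO_{E,T}^{*}\cO_{E,T}-\cO_{N,T}^{*}\cO_{N,T}$ of the Pick operator is used in establishing this equivalence; those enter only when one wishes in addition to pass between the norm-constrained condition and the operator interpolation condition \eqref{5.5}, as in the step $(5)\Rightarrow(1)$ of Theorem~\ref{T:5.3}, which is neither available nor needed in the abstract setting. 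The remaining points — well-definedness of the quadratic form of $\mathbb K_S$ on test vectors involving the base point, and the bookkeeping in the scaling argument — are routine.
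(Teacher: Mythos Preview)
Your proposal is correct and is precisely the explicit version of what the paper leaves implicit: the paper gives no proof of Theorem~\ref{T:5.4} beyond the remark that ``the next result can be arrived at via a careful inspection of the proof of Theorem~\ref{T:5.3}'' together with the preceding observation that the equivalences $(3)\Leftrightarrow(4)\Leftrightarrow(5)$ there survive once one assumes only $P\ge 0$ and condition~(a). Your argument supplies exactly that inspection for $(3)\Leftrightarrow(4)$, including the reduction of the $\cX$-component of the test vector to a single $x_0$ (valid because the $(1,1)$-block of $\mathbb K_S$ is the constant $P$), and your observation that assumption~(b) is not used here matches the paper's own remark that (b) is imposed ``for reasons explained below'' rather than for this equivalence.

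The only minor difference in packaging is that the paper's implicit route for the ``if'' direction would go $(4)\Rightarrow(5)\Rightarrow(3)$ --- first lifting kernel positivity to positivity of the operator $\mathbf P$ in \eqref{5.15} via the density identity, then reading off $F^Sx\in\cH(S)$ with the norm bound from the Douglas-lemma/range-inclusion characterization \eqref{def2HS}--\eqref{1.15u} --- whereas you go directly $(4)\Rightarrow(3)$ by the Riesz representation argument on $\cH(S)$. These are two sides of the same coin (Douglas factorization versus RKHS duality), and your route has the advantage of staying entirely at the level of kernels without introducing the auxiliary operator $\mathbf P$.
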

An important example of a concrete interpolation problem which is a particular case of the 
problem 
{\bf AIP} but not of the {\bf IP} is the boundary interpolation problem \cite{bk6}.

\subsection{Parametrization of the solution set}
The third step of the Potapov method is to describe all functions 
$S$ such that the matrix \eqref{5.16} is positive semidefinite or, equivalently, such that 
the kernel
\eqref{5.14} is positive on $\D\times\D$. This was first done for the case where the Pick 
operator $P$
is strictly positive definite (in early developments, all the problems were matrix-valued 
and with finitely 
many 
interpolation conditions, so $P$ was a matrix which was assumed be positive definite). If 
$P$ is strictly 
positive 
definite, then it follows from factorization
$$
\mathbb K_S(z,\zeta)=\begin{bmatrix}I & 0 \\ F^S(z)P^{-1} & I \end{bmatrix}
\begin{bmatrix}P & 0 \\ 0 & K_S(z,\zeta)-F^S(z)P^{-1}F^S(\zeta)^*\end{bmatrix}
\begin{bmatrix}I & P^{-1}F^S(\zeta)^* \\ 0 & I\end{bmatrix}
$$
that \eqref{5.14} holds if and only if the kernel 
$$
\widetilde{K}_S(z,\zeta)= K_S(z,\zeta)-F^S(z)P^{-1}F^S(\zeta)^*
$$
is positive on $\D\times\D$. Using the definitions \eqref{deBRkernel} and \eqref{5.17} of 
$K_S$ and $F^S$ and 
making 
use of the operators 
\begin{equation}
J=\begin{bmatrix} I_\cY & 0 \\ 0 & -I_\cU\end{bmatrix}\quad \mbox{and}\quad 
C=\begin{bmatrix}E \\ 
N\end{bmatrix},
\label{5.18}
\end{equation}
one can represent the kernel $\widetilde{K}_S$ as 
\begin{align}
\widetilde{K}_S(z,\zeta)=&\frac{I_\cY-S(z)S(\zeta)^*}{1-z\overline{\zeta}}\label{5.19}\\
&-(E-S(z)N)(I-zT)^{-1}P^{-1}(I-\overline{\zeta}T^*)^{-1}(E^*-N^*S(\zeta)^*)\notag\\
=&\begin{bmatrix}I & -S(z)\end{bmatrix}\left\{\frac{J}{1-z\overline{\zeta}}-
C(I-zT)^{-1}P^{-1}(I-\overline{\zeta}T^*)^{-1}C^*\right\}\begin{bmatrix}I \\ 
-S(\zeta)^*\end{bmatrix}.
\notag
\end{align}
The crucial step is to find a function $\Theta: \, \D\to\cL(\cY\oplus\cU)$ such that 
\begin{equation}
\frac{J-\Theta(z)J\Theta(\zeta)^*}{1-z\overline{\zeta}}=
C(I-zT)^{-1}P^{-1}(I-\overline{\zeta}T^*)^{-1}C^*\qquad (z,\zeta\in\D).
\label{5.20} 
\end{equation}
If ${\rm spec} T\cap{\mathbb T}\neq \mathbb T$, i.e., if  there exists a boundary point 
$\mu\in\mathbb T$ such 
that 
$(\mu I-T^*)^{-1}\in\cL(\cX)$, one may try to find a $\Theta$ normalized by 
$\Theta(\mu)=I_{\cY\oplus\cU}$. 
Letting $\zeta=\mu$ in \eqref{5.20} gives
$$
J-\Theta(z)J=(1-z\overline{\mu})C(I-zT)^{-1}P^{-1}(I-\overline{\mu}T^*)^{-1}C^*
$$
which then implies
\begin{align*}
\Theta(z)&=I-(1-z\overline{\mu})C(I-zT)^{-1}P^{-1}(I-\overline{\mu}T^*)^{-1}C^*J\\
&=I+(z-\mu)C(I-zT)^{-1}P^{-1}(\mu I-T^*)^{-1}C^*J,
\end{align*}
and eventually, on account of \eqref{5.18},
\begin{align} 
\Theta(z)&=\begin{bmatrix}\Theta_{11}(z) & \Theta_{12}(z)\\
\Theta_{21}(z) & \Theta_{22}(z)\end{bmatrix}\notag\\
&=I+(z-\mu)\begin{bmatrix}E \\ N\end{bmatrix}(I-zT)^{-1}P^{-1}(\mu 
I-T^*)^{-1}\begin{bmatrix}E^* & 
-N^*\end{bmatrix}.
\label{5.21}
\end{align}
The accomplishment so far is a function satisfying \eqref{5.20} for every $z\in\D$ and a 
fixed $\zeta=\mu\in\mathbb 
T$.  A straightforward calculation based solely on the Stein identity \eqref{5.7} shows 
that the function 
\eqref{5.21}
actually satisfies the identity \eqref{5.20} for all $z,\zeta\in\D$. Moreover, another 
calculation (again 
based on 
the identity  \eqref{5.7} only) shows that 
\begin{equation}
\frac{J-\Theta(z)^*J\Theta(\zeta)}{1-\bar z\zeta}=\widetilde{C}(I-\bar z 
T^*)^{-1}P(I-\zeta 
T)^{-1}\widetilde{C}^*
\label{5.21a} 
\end{equation}
where
$$
\widetilde{C}=JC(I-\mu T)^{-1}P^{-1}(\mu I-T^*).
$$
Formulas \eqref{5.20} and \eqref{5.21} show that the function $\Theta$ is 
$J$-bicontractive, i.e., that 
\begin{equation}
\Theta(z)J\Theta(z)^*\le J\quad\mbox{and}\quad \Theta(z)^*J\Theta(z)\le J\quad \mbox{for 
all}\quad z\in\D.
\label{5.21b}
\end{equation}
Another method of constructing a $J$-contractive $\Theta$ subject to the identity 
\eqref{5.20} is based on the 
Kre\u{\i}n 
space arguments.
\begin{lemma}
Let $P$ be a strictly positive solution to the Stein equation 
\begin{equation}
P-T^*PT=E^*E-N^*N=C^*JC.
\label{5.21c}   
\end{equation}
Then there exists an injective operator $\sbm {B \\ D}: \, \cY\oplus\cU\to 
\cX\oplus\cY\oplus\cU$ such that 
\begin{align}
\begin{bmatrix} T & B \\ C & D \end{bmatrix}\begin{bmatrix}P^{-1} & 0  \\ 0 & 
J\end{bmatrix}
\begin{bmatrix} T^* & C^* \\ B^* & D^* \end{bmatrix}&=\begin{bmatrix}P^{-1} & 0  \\ 0 & 
J\end{bmatrix},
\label{5.22}\\
\begin{bmatrix} T^* & C^* \\ B^* & D^* \end{bmatrix}\begin{bmatrix}P & 0  \\ 0 & 
J\end{bmatrix}
\begin{bmatrix} T & B \\ C & D \end{bmatrix}&=\begin{bmatrix}P & 0  \\ 0 & J\end{bmatrix}.
\label{5.23}
\end{align}
\label{L:5.5}
\end{lemma}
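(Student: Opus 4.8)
The plan is to read \eqref{5.22}--\eqref{5.23} as a Kre\u{\i}n-space unitary-extension problem. Write $G:=\sbm{P & 0\\ 0& J}$, a bounded, boundedly invertible, selfadjoint operator on the Hilbert space $\cX\oplus\cY\oplus\cU$, and let $\cK$ denote this space carrying the indefinite inner product $\langle G\,\cdot\,,\,\cdot\,\rangle$; since $P\gg 0$ this is a Kre\u{\i}n space in which $\cX\oplus\cY$ (with Gram $P\oplus I_\cY$) is uniformly positive and $\cU$ is uniformly negative. With $C=\sbm{E\\ N}$ and $J$ as in \eqref{5.18}, the Stein identity \eqref{5.21c} says exactly that $\mathbf W:=\sbm{T\\ C}$ satisfies $\mathbf W^{*}G\,\mathbf W=T^{*}PT+C^{*}JC=P$, i.e.\ $\mathbf W$ is an isometry of the Hilbert space $\cX_{P}:=(\cX,\langle P\,\cdot\,,\,\cdot\,\rangle)$ into $\cK$, while \eqref{5.22}--\eqref{5.23} say exactly that the colligation $\mathbf V:=\sbm{T & B\\ C& D}$ --- which by its block form automatically restricts to $\mathbf W$ on the positive subspace $\cX_{P}=\cX\oplus\{0\}\oplus\{0\}$ of $\cK$ --- is to be a $G$-unitary on $\cK$: $\mathbf V^{*}G\mathbf V=G$ and $\mathbf V\,G^{-1}\mathbf V^{*}=G^{-1}$ (note $G^{-1}=P^{-1}\oplus J$, as $J^{-1}=J$). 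So the lemma amounts to: the $G$-isometry $\mathbf W$ extends to a $G$-unitary on $\cK$ whose restriction to the complementary Kre\u{\i}n space $\{0\}\oplus\cY\oplus\cU=(\cY\oplus\cU,J)$ is injective. (Conjugating by $P^{1/2}$ at the start normalizes $P$ to $I_{\cX}$ and turns $\mathbf W$ into a genuine $\sbm{I&0\\0&J}$-isometry; this is purely cosmetic.)

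To produce $\sbm{B\\ D}$ I would argue as follows. From $\mathbf W^{*}G\mathbf W=P$ and boundedness of $P,P^{-1}$ one sees at once that $\mathbf W$ has closed range and that ${\rm Ran}\,\mathbf W$ is uniformly positive in $\cK$; hence it is a regular (orthocomplemented) subspace, the $G$-orthogonal projection of $\cK$ onto it is $\mathbf W P^{-1}\mathbf W^{*}G$, and $\cK={\rm Ran}\,\mathbf W\oplus\cM_{0}$ is a $G$-orthogonal decomposition with $\cM_{0}:=({\rm Ran}\,\mathbf W)^{[\perp]}=\ker(\mathbf W^{*}G)=\ker\sbm{T^{*}P & E^{*} & -N^{*}}$ again a regular subspace, hence a Kre\u{\i}n space in its own right. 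Comparing signatures, $\cK$ has positive index $\dim\cX+\dim\cY$ and negative index $\dim\cU$, whereas ${\rm Ran}\,\mathbf W$ is uniformly positive with $\dim{\rm Ran}\,\mathbf W=\dim\cX$; additivity of the indices across the decomposition leaves $\cM_{0}$ with positive index $\dim\cY$ and negative index $\dim\cU$ --- the signature of $(\cY\oplus\cU,J)$. Choosing any Kre\u{\i}n-space isomorphism $\Gamma=\sbm{B\\ D}\colon(\cY\oplus\cU,J)\to\cM_{0}$ (a bounded bijection preserving the indefinite inner products; in particular $\Gamma$ is injective, $\Gamma^{*}G\Gamma=J$, and $\mathbf W^{*}G\Gamma=0$ because ${\rm Ran}\,\Gamma=\cM_{0}$ is $G$-orthogonal to ${\rm Ran}\,\mathbf W$) then gives the required $\sbm{B\\ D}$.

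Verifying \eqref{5.22}--\eqref{5.23} is now block bookkeeping. With $\mathbf V=\sbm{\mathbf W & \Gamma}=\sbm{T & B\\ C& D}$ the blocks of $\mathbf V^{*}G\mathbf V$ are $\mathbf W^{*}G\mathbf W=P$, $\mathbf W^{*}G\Gamma=0$, its adjoint $\Gamma^{*}G\mathbf W=0$, and $\Gamma^{*}G\Gamma=J$, so $\mathbf V^{*}G\mathbf V=\sbm{P & 0\\ 0& J}$, which is \eqref{5.23}. Also ${\rm Ran}\,\mathbf V={\rm Ran}\,\mathbf W+{\rm Ran}\,\Gamma={\rm Ran}\,\mathbf W+\cM_{0}=\cX\oplus\cY\oplus\cU$ by regularity of ${\rm Ran}\,\mathbf W$, so $\mathbf V$ is surjective, and it is injective since $\mathbf V^{*}G\mathbf V$ is invertible; hence $\mathbf V$ is boundedly invertible, and inverting $\mathbf V^{*}G\mathbf V=G$ gives $\mathbf V^{-1}G^{-1}(\mathbf V^{*})^{-1}=G^{-1}$, whence (multiplying on the left by $\mathbf V$ and on the right by $\mathbf V^{*}$) $\mathbf V\,G^{-1}\mathbf V^{*}=G^{-1}$ --- which, since $G^{-1}=P^{-1}\oplus J$, is \eqref{5.22}. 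Injectivity of $\sbm{B\\ D}=\Gamma$ was already noted.

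The one non-routine step is the Kre\u{\i}n-geometry computation, specifically the assertion that $\cM_{0}$ has positive index $\dim\cY$. It rests on three facts: (i) the $G$-orthogonal complement of a regular subspace is regular and the two summands exhaust the ambient space; (ii) the positive and negative indices are additive over such a decomposition; and (iii) two Kre\u{\i}n spaces with equal positive and negative indices are unitarily equivalent --- which is what licenses the choice of $\Gamma$. In the finite-dimensional situation --- the setting of \cite{bd1} and of the Potapov-school sources --- (i) and (ii) are rank--nullity and (iii) is Sylvester's law of inertia, so the count is immediate. In the general operator-valued case (i)--(iii) belong to the standard structure theory of Kre\u{\i}n spaces; the index count must then be understood as an equality of Hilbert-space dimensions of the positive/negative parts of a fundamental decomposition, and it is exactly here that the extra hypotheses present in the references --- finite dimensionality, or a spectral condition such as ${\rm spec}\,T\cap\mathbb T\neq\mathbb T$ --- enter to guarantee $\kappa_{+}(\cM_{0})=\dim\cY$. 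Everything else --- rewriting \eqref{5.21c}, the block multiplication, and the inversion step for \eqref{5.22} --- is mechanical.
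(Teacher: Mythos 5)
Your argument is essentially the paper's own proof: both read the Stein identity as saying that $\operatorname{Ran}\sbm{T\\ C}$ is a uniformly positive, hence regular, subspace of the Kre\u{\i}n space $\bigl(\cX\oplus\cY\oplus\cU,\ P\oplus J\bigr)$, take an injective Kre\u{\i}n-space isometry of $(\cY\oplus\cU,J)$ onto its orthogonal complement (asserted to carry the inertia of $J$), and finish by block bookkeeping --- the only cosmetic difference being that you verify \eqref{5.23} first by direct multiplication and deduce \eqref{5.22} by inversion, whereas the paper equates two expressions for the projection onto $\cG^{[\perp]}$ to get \eqref{5.22} and then invokes injectivity of $\sbm{B\\ D}$ for \eqref{5.23}. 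The inertia count for $\cG^{[\perp]}$, which you correctly single out as the one delicate point when $\dim\cX=\infty$, is asserted without further justification in the paper as well, so your write-up is if anything more candid about where the argument genuinely relies on additional structure.
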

\begin{proof} It is seen from the Stein identity \eqref{5.21c} that ${\mathcal G } : 
=\operatorname{Ran}\sbm{ T \\ 
C}$ 
is a 
uniformly positive subspace of the 
Kre\u{\i}n space $\mathcal K=\cX \oplus\cY \oplus \cU$ with inner product induced by the 
operator  $\sbm{P & 0 
\\ 
0 
& J}$. The Kre\u{\i}n-space orthogonal projection of $\cK$  onto ${\mathcal G}$
is given by ${\mathcal P}_{{\mathcal G}} = \sbm{T \\ C}\sbm{T \\ C}^{[*]}$
where the Kre\u{\i}n-space adjoint of $\sbm{T \\ C}$ is given by
$$
\begin{bmatrix} T \\ C\end{bmatrix}^{[*]} =  
P^{-1} \begin{bmatrix} T^{*} & C^* \end{bmatrix}
\begin{bmatrix} P & 0 \\ 0 & J \end{bmatrix}.
$$
Therefore, the Kre\u{\i}n-space orthogonal projection ${\mathcal 
P}_{{\mathcal G}^{[\perp ]}}$ equals
\begin{equation}  \label{5.24}
{\mathcal P}_{{\mathcal G}^{[\perp]}} = I_{\cK}
-{\mathcal P}_{{\mathcal G}} = I - \begin{bmatrix} T \\ C \end{bmatrix}
P^{-1} \begin{bmatrix} T^{*} & C^*\end{bmatrix}
\begin{bmatrix} P & 0 \\ 0 & J \end{bmatrix}.
\end{equation}
On the other hand, since $\cG$ is a uniformly positive subspace of $\cK$, its orthogonal 
complement ${\mathcal 
G}^{[\perp]}$ is also a Kre\u{\i}n space in inner product inherited from $\cK$ with 
inertia equal to that of 
$J$ 
on 
$\cY \oplus \cU$. Therefore there is an injective isometry 
$$
\begin{bmatrix} B \\ D \end{bmatrix}: \, \left( \begin{bmatrix} \cY 
\\ \cU \end{bmatrix}, J\right) \to \mathcal K\quad\mbox{such that}\quad {\mathcal 
P}_{{\mathcal G}^{[\perp]}} 
= 
\begin{bmatrix}B \\ D\end{bmatrix}\begin{bmatrix} B \\
D\end{bmatrix}^{[*]}.
$$
Since  $\begin{bmatrix} B \\ D \end{bmatrix}^{[*]} =J \begin{bmatrix} B^{*} & D^{*} 
\end{bmatrix}
      \begin{bmatrix} P & 0 \\ 0 & J \end{bmatrix}$, it follows that
\begin{equation}
\label{5.25}
{\mathcal P}_{{\mathcal G}^{[\perp]}} = \begin{bmatrix} B \\ D
\end{bmatrix} J\begin{bmatrix} B^{*} & D^{*} \end{bmatrix}
\begin{bmatrix} & 0 \\ 0 & J \end{bmatrix}.
\end{equation}
Multiplying the two expressions \eqref{5.24} and \eqref{5.25} for
$P_{{\mathcal P}^{[ \perp ]}}$ by $\sbm{P^{-1} & 0 \\ 0 & J}$ on the right 
and using the subsequent equation gives 
\eqref{5.22}. Equality \eqref{5.23} then follows from the injectivity of $\sbm{B\\ D}$.
\end{proof}
With the operators $B$ and $D$ subject to operator equalities 
\eqref{5.22}, \eqref{5.23} in hand, the next step is to let
\begin{equation}  \label{5.26}
\Theta(z)=D+zC(I-z T)^{-1}B
\end{equation}
and then the identity \eqref{5.20} follows from \eqref{5.22} whereas the identity 
$$
\frac{J-\Theta(z)^*J\Theta(\zeta)}{1-\bar z\zeta}=B^*(I-\bar z T^*)^{-1}P(I-\zeta T)^{-1}B
$$
is a consequence of \eqref{5.23}. The function $\Theta$ obtained this way also satisfies 
inequalities 
\eqref{5.21}.
\begin{remark}
{\rm If a solution $P$ to the Stein equation \eqref{5.7} is strictly positive definite. it 
then 
follows that the operator $T$ is strongly stable , which in turn implies that the function 
$\Theta$ is 
$J$-inner, i.e., that is,
the nontangential boundary values $\Theta(t)$ exist for almost all $t\in\mathbb T$ and are
$J$-unitary: $\Theta(t)J\Theta(t)^{*}= J$.}
\label{R:inner}
\end{remark}
\begin{theorem} 
Let $P$ be a strictly positive solution to the Stein equation \eqref{5.21c} and let 
$\Theta$ be a 
$J$-bicontractive function satisfying the identity \eqref{5.20}. Then a function $S: \, 
\D\to\cL(\cU,\cY)$ 
is a solution to the problem {\bf AIP} if and only if it is of the form 
\begin{equation}  \label{5.27}
S=(\Theta_{11}\cE+\Theta_{12})(\Theta_{21}\cE+\Theta_{22})^{-1}
\end{equation}
for some $\cE\in\cS(\cU,\cY)$.
\label{T:5.6}
\end{theorem}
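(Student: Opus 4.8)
The plan is to use Theorem~\ref{T:5.4} to restate ``$S$ solves the {\bf AIP}'' as positivity of the kernel ${\mathbb K}_S$ of~\eqref{5.14}, then to divide out the (now invertible) Pick operator and recognize the leftover kernel as one forcing $S$ to arise from a Schur-class parameter $\cE$ through $\Theta$. Since $P\succ0$, the block factorization of ${\mathbb K}_S$ displayed just before~\eqref{5.19} shows that ${\mathbb K}_S\succeq0$ on $\D\times\D$ if and only if the kernel $\widetilde{K}_S(z,\zeta)=K_S(z,\zeta)-F^S(z)P^{-1}F^S(\zeta)^*$ is positive, and by~\eqref{5.19} together with the identity~\eqref{5.20} for $\Theta$,
$$
\widetilde{K}_S(z,\zeta)=\bbm{I_\cY & -S(z)}\,\frac{\Theta(z)J\Theta(\zeta)^*}{1-z\overline{\zeta}}\,\bbm{I_\cY \\ -S(\zeta)^*}=\frac{a(z)a(\zeta)^*-b(z)b(\zeta)^*}{1-z\overline{\zeta}},
$$
where $\sbm{a(z) & b(z)}:=\sbm{I_\cY & -S(z)}\Theta(z)$, i.e.\ $a=\Theta_{11}-S\Theta_{21}$ and $b=\Theta_{12}-S\Theta_{22}$. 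Thus everything reduces to the equivalence: $S$ solves the {\bf AIP} if and only if $\widetilde{K}_S$ is positive on $\D\times\D$. I would also record up front that, because $P\succ0$, the function $\Theta$ is $J$-inner (Remark~\ref{R:inner}); in particular $\Theta(z)$ is invertible for every $z\in\D$.

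For the ``if'' direction, let $\cE\in\cS(\cU,\cY)$. The standard linear-fractional calculus for the $J$-inner $\Theta$ shows that $q:=\Theta_{21}\cE+\Theta_{22}$ is invertible on $\D$ and that~\eqref{5.27}---i.e.\ $S=pq^{-1}$ with $p:=\Theta_{11}\cE+\Theta_{12}$---defines an analytic $\cL(\cU,\cY)$-valued function. The only computation is one line: $a\cE+b=\sbm{I & -S}\Theta\sbm{\cE \\ I}=\sbm{I & -S}\sbm{p \\ q}=p-Sq=0$, so $b=-a\cE$ and hence
$$
\widetilde{K}_S(z,\zeta)=a(z)\,\frac{I_\cY-\cE(z)\cE(\zeta)^*}{1-z\overline{\zeta}}\,a(\zeta)^*,
$$
which is positive on $\D\times\D$ because $\cE$ is Schur; by the reduction, $S$ solves the {\bf AIP}.

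For the ``only if'' direction, suppose $S$ solves the {\bf AIP}, so $\widetilde{K}_S\succeq0$. I would first prove that $a(z)$ is invertible for every $z\in\D$. One half is easy: if $a(z)^*v=0$ then, since the diagonal value $\widetilde{K}_S(z,z)\succeq0$ gives $a(z)a(z)^*\ge b(z)b(z)^*$, also $b(z)^*v=0$, so $\Theta(z)^*\sbm{I \\ -S(z)^*}v=\sbm{a(z)^* \\ b(z)^*}v=0$, whence $v=0$ because $\Theta(z)$ is invertible; the remaining boundedness-below estimates (for $a(z)$ and $a(z)^*$) I would extract from the $J$-unitarity of the boundary values of the $J$-inner $\Theta$ together with $\|S\|\le1$. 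Given invertibility of $a$, set $\cE:=-a^{-1}b$; this is analytic, conjugating the positive kernel $\widetilde{K}_S$ by the pointwise-invertible $a(z)^{-1}$ yields positivity of $\bigl(I_\cY-\cE(z)\cE(\zeta)^*\bigr)/(1-z\overline{\zeta})$ on $\D\times\D$, and evaluating this on the diagonal gives $\|\cE(z)\|\le1$, so $\cE\in\cS(\cU,\cY)$. Finally, $b=-a\cE$ reads $\sbm{I & -S}\Theta\sbm{\cE \\ I}=0$, i.e.\ $p=Sq$ with $\sbm{p \\ q}=\Theta\sbm{\cE \\ I}$; since $\cE$ is now known to be Schur, the ``if'' part gives that $q=\Theta_{21}\cE+\Theta_{22}$ is invertible, whence $S=pq^{-1}=(\Theta_{11}\cE+\Theta_{12})(\Theta_{21}\cE+\Theta_{22})^{-1}$, which is~\eqref{5.27}.

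The genuinely nontrivial step is the invertibility of $a(z)$ in the ``only if'' direction; everything else is block-matrix bookkeeping plus the elementary stability of kernel positivity under pointwise conjugation by an invertible analytic multiplier. It is precisely there that the hypothesis $P\succ0$---through the $J$-inner-ness of $\Theta$ that it supplies (Remark~\ref{R:inner})---enters in an essential way.
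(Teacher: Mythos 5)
Your reduction of the problem to the positivity of the kernel
$$
\widetilde{K}_S(z,\zeta)=\bbm{I_\cY & -S(z)}\,\frac{\Theta(z)J\Theta(\zeta)^*}{1-z\overline{\zeta}}\,\bbm{I_\cY \\ -S(\zeta)^*}=\frac{a(z)a(\zeta)^*-b(z)b(\zeta)^*}{1-z\overline{\zeta}}
$$
is exactly the paper's reduction (via the Schur-complement factorization of $\mathbb K_S$ and the identity \eqref{5.20}), and your ``if'' direction is correct and essentially the same one-line computation. The divergence, and the problem, is in the ``only if'' direction. The paper at this point invokes Leech's theorem (\cite[p.~107]{RR}): positivity of $(u(z)u(\zeta)^*-v(z)v(\zeta)^*)/(1-z\overline\zeta)$ is \emph{equivalent} to a factorization $v=u\cE$ with $\cE\in\cS(\cU,\cY)$, with no invertibility of $u=a$ required anywhere. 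You instead try to manufacture $\cE:=-a^{-1}b$ by proving that $a(z)=\Theta_{11}(z)-S(z)\Theta_{21}(z)$ is invertible at every point of $\D$, and this is where the gap sits: what you actually establish is only that $a(z)^*$ is injective (from $a(z)a(z)^*\ge b(z)b(z)^*$ and the invertibility of $\Theta(z)$), which in the operator-valued setting is far from invertibility --- it does not even give dense range of $a(z)^*$, let alone bounded invertibility. The remaining ``boundedness-below estimates'' are deferred to an appeal to the $J$-unitarity of the a.e.\ boundary values of $\Theta$, but boundary behavior does not control invertibility at interior points, and no mechanism is offered for transporting such an estimate into $\D$. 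Since you yourself identify this as ``the genuinely nontrivial step,'' the proof as written is incomplete precisely at its crux.

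It is worth noting that the step you are missing is not mere bookkeeping: pointwise invertibility of $\Theta_{11}-S\Theta_{21}$ is a strictly stronger statement than what the theorem needs, and proving it in the operator-valued case essentially amounts to first knowing that $S$ has the form \eqref{5.27} (so that $a=(\Theta_{11}\cE+\Theta_{12})q^{-1}\cdot q - S\Theta_{21}=\dots$ collapses, as in your ``if'' computation, to an expression built from the invertible $\Theta_{21}\cE+\Theta_{22}$). The clean way out is the one the paper takes: replace the pointwise argument by the operator-theoretic one, i.e.\ read the kernel positivity as $M_vM_v^*\le M_uM_u^*$ on $H^2(\cY)$ and apply Leech's theorem to produce the Schur-class $\cE$ with $v=u\cE$ directly; the algebraic rearrangement to \eqref{5.27} (using the invertibility of $\Theta_{21}\cE+\Theta_{22}$, which you do prove correctly from the $J$-bicontractivity \eqref{5.21b}) then closes the argument. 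If you want to keep your route, you must either restrict to a setting where injectivity plus a Fredholm or finite-dimensionality argument upgrades to invertibility, or supply an actual interior lower bound for $a(z)$ --- neither of which is present in your write-up.
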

\begin{proof}
Substituting the block decomposition $\Theta=\sbm{\Theta_{11} & \Theta_{12}\\ \Theta_{21} 
& \Theta_{22}}$
conformal with that of $J$ into inequalities \eqref{5.21b} gives in particular,
\begin{align*}
& \Theta_{21}(z)\Theta_{21}(z)^{*} -\Theta_{22}(z)\Theta_{22}(z)^{*} \le - I_{\cU}, \\    
& \Theta_{12}(z)^{*}\Theta_{12}(z) -\Theta_{22}(z)^{*}\Theta_{22}(z) \le -I_{\cU},       
\end{align*}
from which it follows that $\Theta_{22}(z)$ is invertible and that 
$\|\Theta_{22}^{-1}(z)\Theta_{21}(z)\|\le 
1$ for all $z\in\D$. Therefore, 
$$
\Theta_{21}(z)\cE(z)+\Theta_{22}(z)=\Theta_{22}(z)(\Theta_{22}^{-1}(z)\Theta_{21}(z)\cE(z)+I)
$$ 
is invertible for all $z\in\D$ and $\cE\in\cS(\cU,\cY)$ and thus the formula \eqref{5.27} 
makes sense.

\smallskip

One can now substitute \eqref{5.20} into \eqref{5.19} and conclude that the kernel $\mathbb 
K_S$ is positive 
on $\D\times\D$ if and only if 
\begin{equation}  \label{5.28}
\widetilde{K}_S(z,\zeta)=\begin{bmatrix}I & 
-S(z)\end{bmatrix}\frac{\Theta(z)J\Theta(\zeta)^*}{1-z\overline{\zeta}}
\begin{bmatrix}I \\ -S(\zeta)^*\end{bmatrix}\succeq 0.
\end{equation}
Set
\begin{equation}  \label{5.29}
u= \Theta_{11} - S\Theta_{21}, \qquad
v= S\Theta_{22}-\Theta_{12}.
\end{equation}
Then $\begin{bmatrix} u & - v \end{bmatrix} := \begin{bmatrix} I & -S\end{bmatrix}\Theta$,
        then it follows that 
$$
         \frac{ u(z) u(\zeta)^{*} - v(z) v(\zeta)^{*}}{1 - \langle z,
        \zeta \rangle} \succeq 0,
$$
which is equivalent, by Leech's theorem (see \cite[page 107]{RR}), to 
a factorization $v(z) = u(z) {\mathcal E}(z)$
for some ${\mathcal E} \in {\mathcal S}(\cU, \cY)$. On account of 
\eqref{5.29}, this in turn can be written 
as
$$
S\Theta_{22}-\Theta_{12}=(\Theta_{11}-S\Theta_{21})\cE.
$$
The latter can be rearranged as 
$S(\Theta_{21}\cE+\Theta)=\Theta_{11}\cE+\Theta_{12}$ which in turn, is equivalent to 
\eqref{5.27}.
\end{proof}
The formal obstacle to the use of the parametrization \eqref{5.27} in case $P\ge 0$ is 
singular 
is the presence of
$P^{-1}$ in the formula \eqref{5.21} for $\Theta$ (the inverse of $P$ also appears 
implicitly in formula 
\eqref{5.26} since the entries in this formula must satisfy equality \eqref{5.22}). A 
naive attempt
to overcome this difficulty (in case $\dim\cX<\infty$) would be to replace the inverse of 
$P$ by its 
Moore-Penrose pseudoinverse. Not for the general {\bf IP}, but at least for the 
left-tangential 
Nevanlinna-Pick 
problem 
\eqref{5.1}, the formula \eqref{5.21} produces all solutions to the problem if the 
parameter $\cE$ is taken in 
the 
form
$$
\cE(z)=U\begin{bmatrix} \widetilde{\cE}(z) & 0 \\ 0 & 
I_\nu\end{bmatrix}V,\quad\mbox{where}\quad 
\nu={\rm rank}(P+N^*N)-{\rm rank} P,
$$
where $U$ and $V$ are two matrices depending only on interpolation data and where 
$\widetilde{\cE}$ is an 
arbitrary 
Schur-class function. It was shown in \cite{dub2} for the matricial 
Schur-Carath\'eodory-Fej\'er problem and 
in 
\cite{bd1} for the general problem {\bf IP} (still with $\dim \cX<\infty$) that a similar 
result holds with 
an appropriate choice of the pseudoinverse of $P$ (not the Moore-Penrose in general) 
satisfying certain 
invariance 
relations.

\smallskip

In the case $\dim \cX=\infty$, this method does not seem to work beyond the situation 
where the compression of $P$ to the orthogonal complement of its kernel is 
strictly positive definite.  The following alternative approach handles the problem 
{\bf AIP} regardless of whether the operator $P$ is strictly positive 
definite or just positive semidefinite.

\subsection{Redheffer parametrization of the solution set} Once again 
the starting point is the Stein 
identity 
\eqref{5.7}
according to which
$$
\|P^{\frac{1}{2}}x\|_{\cX}^2+\|Nx\|_{\cU}^2=\|P^{\frac{1}{2}}Tx\|_{\cX}^2+\|Ex\|_{\cY}^2\quad\text{for
all}\quad x\in\cX.
$$
let $\cX_0=\overline{\rm Ran} P^{\frac{1}{2}}$; the conclusion from 
the Stein equality then is that 
there exists a well defined isometry
$V$ with domain ${\mathcal D}_V$ and range ${\mathcal R}_V$ equal to
$$
    {\mathcal {D}}_V=\overline{\rm Ran}\left[\begin{array}{c}
    P^{\frac{1}{2}} \\ N\end{array}\right]\subseteq \left[\begin{array}{c}
    \cX_0 \\  \cU\end{array}\right]\quad\mbox{and}\quad
    {\mathcal {R}}_V=\overline{\rm Ran}
    \left[\begin{array}{c}P^{\frac{1}{2}} T\\
    E\end{array}\right]\subseteq \left[\begin{array}{c}
    \cX_0 \\  \cY\end{array}\right],
$$
respectively, which is uniquely determined by the identity
 \begin{equation}\label{5.32}
    V \left[\begin{array}{c}P^{\frac{1}{2}} x \\
    Nx\end{array}\right]
   = \left[\begin{array}{c} P^{\frac{1}{2}} Tx\\
    Ex\end{array}\right]\quad\mbox{for all} \; \; x\in\cX.  
    \end{equation}
Let the defect spaces be defined by
    \begin{equation}\label{5.33}
    \Delta:=\left[\begin{array}{c}\cX_0 \\ \cU\end{array}\right]
    \ominus{\mathcal D}_V \quad{\rm and}\quad
    \Delta_*:=\left[\begin{array}{l}\cX_0\\
    \cY\end{array}\right]\ominus{\mathcal {R}}_V
    \end{equation}
    and let $\widetilde{\Delta}$ and $\widetilde{\Delta}_*$ denote
    isomorphic copies of $\Delta$ and $\Delta_*$, respectively, with
    unitary identification maps
    $$
    {\bf i}: \; \Delta\rightarrow \widetilde{\Delta}\quad\mbox{and}\quad
    {\bf i}_*: \; \Delta_*\rightarrow \widetilde{\Delta}_*.
    $$
With these identification maps let us define a unitary colligation matrix {\bf U} from
$\cD_V\oplus\Delta\oplus\widetilde\Delta_*=\cX\oplus\cU\oplus \widetilde\Delta_*$
onto $\cR_V\oplus\Delta_*\oplus \widetilde\Delta=\cX\oplus\cY\oplus \widetilde\Delta$
by
\begin{equation}\label{5.34}
{\bf U}=\begin{bmatrix}V & 0 & 0\\0 & 0 & {\mathbf i}_*^*\\0 & {\mathbf i} & 
0\end{bmatrix}
: \, \begin{bmatrix}\cD_V\\\Delta\\ \widetilde\Delta_*\end{bmatrix}
\to\begin{bmatrix}\cR_V\\\Delta_*\\ \widetilde\Delta\end{bmatrix},
\end{equation}
which will be also decomposed as
\begin{equation}\label{5.35}
{\bf U}=\begin{bmatrix}A&B_1&B_2\\C_1&D_{11}&D_{12}\\C_2&D_{21}&0\end{bmatrix}
:\, \begin{bmatrix}\cX_0\\\cU\\ \widetilde\Delta_*\end{bmatrix}
\to\begin{bmatrix}\cX_0\\\cY\\ \widetilde\Delta\end{bmatrix}.
\end{equation}
Write $\Sigma$ for the characteristic function associated with this
colligation ${\bf U}$, i.e.,
\begin{equation}\label{3.7}
\Sigma(z)=\begin{bmatrix}D_{11}&D_{12}\\D_{21}&0\end{bmatrix}
+z\begin{bmatrix}C_1\\C_2\end{bmatrix}(I-zA)^{-1}\begin{bmatrix}B_1&B_2\end{bmatrix}\quad(z\in\D),
\end{equation}
and decompose $\Sigma$ as
\begin{equation}\label{5.36}
\Sigma(z)=\begin{bmatrix}\Sigma_{11}(z)&\Sigma_{12}(z)\\\Sigma_{21}(z)&\Sigma_{22}(z)\end{bmatrix}
:\begin{bmatrix}\cU\\ \widetilde{\Delta}_*\end{bmatrix}\to\begin{bmatrix}\cY\\ 
\widetilde{\Delta}\end{bmatrix}.
\end{equation}
A straightforward calculation based on the fact that ${\bf U}$ is coisometric
gives
\begin{equation}
\frac{I-\Sigma(z)\Sigma(\zeta)^*}{1-z\overline{\zeta}}=
\begin{bmatrix}C_1\\C_2\end{bmatrix}(I-zA)^{-1}(I-\overline{\zeta}A^*)^{-1}
\begin{bmatrix}C_1^*&C_2^*\end{bmatrix},
\label{5.37}
\end{equation}
which implies in particular that $\Sigma$ belongs to the Schur class
$\cS(\cU\oplus\widetilde{\Delta}_*,\cY\oplus\widetilde{\Delta})$. 
\begin{theorem}
\label{T:5.7}
A function $S: \, \D\to\cL(\cU,\cY)$ is a solution of the problem {\bf AIP}
if and only if 
\begin{equation}
S={\mathcal R}_{\Sigma}[\cE]:=\Sigma_{11}+\Sigma_{12}(I-\cE\Sigma_{22})^{-1}\cE\Sigma_{21}
\label{5.38}   
\end{equation}
for some $\cE\in\cS(\widetilde{\Delta},\widetilde{\Delta}_*)$.
\end{theorem}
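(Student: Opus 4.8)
The statement is the Redheffer‑transform parametrization of all solutions of the {\bf AIP}, and the natural route is to combine Theorem~\ref{T:5.4} (which reduces {\bf AIP} to positivity of the kernel $\mathbb K_S$) with the chain‑scattering/Redheffer algebra governing the coisometric colligation ${\bf U}$ in \eqref{5.34}--\eqref{5.36}. So the first reduction is: $S$ solves {\bf AIP} $\iff$ $\mathbb K_S(z,\zeta)\succeq 0$ on $\D\times\D$. The plan is then to recognize $\mathbb K_S$ as, essentially, a Schur‑complement‑type object built from $\Sigma$ and $S$, and to see positivity of $\mathbb K_S$ as equivalent to a Leech‑type factorization which is exactly the Redheffer equation \eqref{5.38}.

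\smallskip

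\textbf{Key steps.} First I would record the ``fundamental identity'' for $\Sigma$: from coisometry of ${\bf U}$ one has \eqref{5.37}, and the companion isometry relation on the other side gives $\frac{I-\Sigma(\zeta)^*\Sigma(z)}{1-\bar\zeta z}=\begin{bmatrix}B_1\\B_2\end{bmatrix}^*(I-\bar\zeta A^*)^{-1}(I-zA)^{-1}\begin{bmatrix}B_1&B_2\end{bmatrix}$ on the domain of $V$, after one pulls back through the identification $\cD_V\oplus\Delta\oplus\widetilde\Delta_*=\cX\oplus\cU\oplus\widetilde\Delta_*$. The content of the particular block structure of ${\bf U}$ in \eqref{5.34} (the $0$ in the bottom‑right corner, and the fact that the top‑left block is precisely the isometry $V$ of \eqref{5.32}) is what ties $A,C_1,C_2$ back to the interpolation data $T,E,N,P$: namely $P^{\frac12}A^* = TP^{\frac12}$‑type intertwinings hold modulo the identifications, so that $C(I-zT)^{-1}P^{-1/2}$ and $\begin{bmatrix}C_1\\C_2\end{bmatrix}(I-zA)^{-1}$ agree on $\cX_0$. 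Second, I would substitute the representation \eqref{5.37} of $\frac{I-\Sigma\Sigma^*}{1-z\bar\zeta}$, together with this intertwining, into the formula \eqref{5.14} for $\mathbb K_S$, so that the whole $2\times2$ block kernel gets expressed through $\Sigma$, $S$, and the observability operator $(I-zA)^{-1}$. The upshot — and this is the heart of the argument — is that $\mathbb K_S\succeq 0$ is equivalent to
$$
\frac{1}{1-z\bar\zeta}\begin{bmatrix}I_\cY & -S(z)\end{bmatrix}\bigl(\text{a Redheffer quadratic form in }\Sigma,S\bigr)\begin{bmatrix}I_\cY\\ -S(\zeta)^*\end{bmatrix}\succeq 0,
$$
which, via the standard chain‑scattering manipulation, says precisely that the function
$$
\cE := (\Sigma_{21} + \Sigma_{22}(\cdots))^{-1}(\cdots)
$$
extracted by solving \eqref{5.38} for $\cE$ is a Schur‑class function from $\widetilde\Delta$ to $\widetilde\Delta_*$. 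Conversely, given any such $\cE\in\cS(\widetilde\Delta,\widetilde\Delta_*)$, one checks directly (using \eqref{5.37} and the intertwining) that $S=\mathcal R_\Sigma[\cE]$ makes $\mathbb K_S$ positive; the invertibility of $I-\cE\Sigma_{22}$ needed to make \eqref{5.38} well defined follows from $\|\cE\|_\infty\le1$ together with the strict contractivity of $\Sigma_{22}$ on $\D$ coming from the $0$ corner (as in the $\Theta_{22}$ argument in the proof of Theorem~\ref{T:5.6}).

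\smallskip

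\textbf{Main obstacle.} The delicate point is not the algebra of the Redheffer transform but the passage through the identification maps ${\bf i},{\bf i}_*$ and the defect spaces $\Delta,\Delta_*$: one must verify that the observability operators attached to $A$ on $\cX_0$ really do reproduce $\cO_{E,T}P^{-1/2}$ and $\cO_{N,T}P^{-1/2}$, i.e.\ that no information is lost when $V$ is extended to the unitary ${\bf U}$ by adjoining $\Delta,\Delta_*$. This is exactly the step where assumptions {\rm (a)}--{\rm (b)} are used — (b) is what makes $V$ in \eqref{5.32} a well‑defined isometry, and (a) is what lets all the relevant $(I-zT)^{-1}$ and $(I-zA)^{-1}$ resolvents make sense as holomorphic $\cL$-valued functions on $\D$ — and it requires care because $P$ may be singular, so one works on $\cX_0=\overline{\operatorname{Ran}}P^{1/2}$ throughout rather than on $\cX$, and $P^{-1}$ never appears. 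Once this dictionary between the $\Sigma$-side data $(A,B_i,C_i,D_{ij})$ and the {\bf AIP} data $(T,E,N,P)$ is established, the equivalence with \eqref{5.38} is a routine (if lengthy) Schur‑complement and Leech‑theorem computation parallel to the proof of Theorem~\ref{T:5.6}, with the $J$-bicontractive $\Theta$ there replaced by the Schur‑class chain matrix $\Sigma$ here.
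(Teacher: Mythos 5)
Your strategy is genuinely different from the one the paper relies on, and as written it has a real gap precisely at the hard half of the equivalence. The paper does not prove Theorem~\ref{T:5.7} via kernel positivity: it reduces the statement to the fact that the solutions of {\bf AIP} are exactly the characteristic functions $A_{22}+zA_{12}(I-zA_{11})^{-1}A_{21}$ of unitary extensions \eqref{uniext} of the isometry $V$ of \eqref{5.32} --- the ``if'' part being easy and the ``only if'' part being the deep result of \cite{kky} --- and then invokes the Arov--Grossman coupling parametrization \cite{arovgros1, arovgros2} of the characteristic functions of all such extensions to arrive at \eqref{5.38}. Your plan instead runs through Theorem~\ref{T:5.4} and a Leech-type factorization modeled on the proof of Theorem~\ref{T:5.6}.

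Two concrete problems. First, the intertwining you assert --- that $C(I-zT)^{-1}P^{-1/2}$ and $\sbm{C_1 \\ C_2}(I-zA)^{-1}$ agree on $\cX_0$ --- is false. From ${\bf U}|_{\cD_V}=V$ one gets $AP^{1/2}+B_1N=P^{1/2}T$, $C_1P^{1/2}+D_{11}N=E$ and $C_2P^{1/2}+D_{21}N=0$, which give $C_1(I-zA)^{-1}P^{1/2}=(E-\Sigma_{11}(z)N)(I-zT)^{-1}$ and $C_2(I-zA)^{-1}P^{1/2}=-\Sigma_{21}(z)N(I-zT)^{-1}$; the identity actually linking the $\Sigma$-side observability operator to $F^S$ is \eqref{5.44} of Proposition~\ref{P:5.10}, which already presupposes $S=\cR_\Sigma[\cE]$ and therefore cannot be fed into a proof of the ``only if'' direction. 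Second, the step you call the heart of the argument is asserted rather than carried out, and it is exactly where the degenerate case bites. In Theorem~\ref{T:5.6} the Leech reduction works because $P^{-1}$ exists: one takes the Schur complement of $P$ in $\mathbb K_S$, substitutes \eqref{5.20}, and lands on $u(z)u(\zeta)^*-v(z)v(\zeta)^*\succeq 0$ with $\sbm{u & -v}=\sbm{I & -S}\Theta$ for a square $J$-bicontractive $\Theta$. Here $P$ may be singular, $\Sigma$ is rectangular ($\Sigma_{12}$ and $\Sigma_{21}$ need not be invertible), and $\cR_\Sigma$ is in general not injective (see the discussion after Theorem~\ref{T:6.10} and condition \eqref{6.29}), so ``solving \eqref{5.38} for $\cE$'' is not a well-defined operation; given a solution $S$ one must \emph{construct} some $\cE$, and that construction is the nontrivial content the paper delegates to the unitary-extension and coupling machinery of \cite{kky}. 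What your outline does establish, via \eqref{5.37} and Proposition~\ref{P:5.10}, is essentially only the easy ``if'' half.
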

Note that by construction, $\Sigma_{22}(0)=0$ so that formula \eqref{5.38} makes sense for 
any
Schur-class function $\cE\in\cS(\widetilde{\Delta},\widetilde{\Delta}_*)$.
The proof of Theorem \ref{T:5.7} can be found in \cite{kky, kh}. 

In more detail, it is not hard to see that
if $\mathcal K$ is a Hilbert space containing $\cX$ and
\begin{equation}
U=\begin{bmatrix} A_{11} & A_{12}\\  A_{21} & A_{22}\end{bmatrix}: \, \begin{bmatrix}\cK 
\\ 
\cU\end{bmatrix}\to \begin{bmatrix}\cK \\ \cY\end{bmatrix}
\label{uniext}
\end{equation}
is a unitary operator such that 
$$
    U \left[\begin{array}{c}P^{\frac{1}{2}} x \\
    Nx\end{array}\right]
   = \left[\begin{array}{c} P^{\frac{1}{2}} Tx\\
    Ex\end{array}\right]\quad\mbox{for all} \; \; x\in\cX,
$$
(i.e., $U$ is a unitary extension of the isometry $V$ \eqref{5.32}), 
then the characteristic function 
$$
S(z)=A_{22}+zA_{12}(I-zA_{11})^{-1}A_{21}
$$
is a solution of the problem {\bf AIP}. A much less trivial fact (established in 
\cite{kky}) is that 
{\em 
any} solution to the problem {\bf AIP} arises in this way.  Then it remains to parametrize  
all
unitary extensions $U$ of the form \eqref{uniext} of the isometry \eqref{5.32} or (which 
is even better)
to parametrize the set of characteristic functions of all such extensions. The latter was 
done 
in \cite{arovgros1, arovgros2} via coupling of unitary colligations.

\smallskip

The conclusion of this section is a result needed  in the sequel; 
proofs can be found in \cite{bbth2}. 

\begin{proposition}
\label{P:5.10}
Let $\Sigma$ be the Schur-class function constructed as in \eqref{3.7} and decomposed as 
in 
\eqref{5.36}, and let $S$ be of the form \eqref{5.38} for a given 
$\cE\in\cS(\widetilde{\Delta},\widetilde{\Delta}_*)$.  Then the de Branges-Rovnyak kernels
$K_S$ and $K_\cE$ (see \eqref{deBRkernel}) are related as follows:
\begin{equation}
K_S(z,\zeta)= G(z)K_\cE(z,\zeta)G(\zeta)^*+\Gamma(z)\Gamma(\zeta)^*
\label{5.42}  
\end{equation}
where the functions $G$ and $\Gamma$ are defined on $\D$ in terms of $\Sigma$ by
\begin{equation}\label{5.43}
\begin{array}{rl}
G(z)\!\!\!&=\Sigma_{12}(z)(I-\cE(z)\Sigma_{22}(z))^{-1},\\[.2cm]
\Gamma(z)\!\!\!&=\left(C_1+G(z)\cE(z)C_2\right)(I-z A)^{-1}.
\end{array}
\end{equation}
Furthermore, the following equality  holds for all $z\in\D$:
\begin{equation}
\Gamma(z)P^{\frac{1}{2}}=(E-S(z)N(I-zT)^{-1}=F^S(z).
\label{5.44}
\end{equation}
\end{proposition}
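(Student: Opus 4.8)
The plan is to establish the two displayed relations separately: identity \eqref{5.42} will come from the coisometry of ${\bf U}$, as recorded in the fundamental identity \eqref{5.37}, while \eqref{5.44} will come from the intertwining relations built into the defining property \eqref{5.32} of the isometry $V$. Two standing observations make everything well posed: $\|A\|\le 1$ (a block of a coisometry is a contraction), so $(I-zA)^{-1}$ exists on $\D$; and $\Sigma_{22}(0)=0$ forces $\|\Sigma_{22}(z)\|\le|z|<1$ on $\D$ by the Schwarz lemma applied to the scalar functions $z\mapsto\langle\Sigma_{22}(z)u,w\rangle$, so $I-\cE(z)\Sigma_{22}(z)$ is boundedly invertible and $G$ and $\Gamma$ in \eqref{5.43} are genuine analytic functions on $\D$.

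For \eqref{5.42}, first I would expand \eqref{5.37} in the $(\cY\oplus\widetilde\Delta)$-block decomposition to obtain formulas for $I-\Sigma_{11}(z)\Sigma_{11}(\zeta)^*$, $-\Sigma_{11}(z)\Sigma_{21}(\zeta)^*$, $-\Sigma_{21}(z)\Sigma_{11}(\zeta)^*$ and $\Sigma_{21}(z)\Sigma_{21}(\zeta)^*$ in terms of the $\Sigma_{12}$-products, the $\Sigma_{22}$-products, and the four terms $(1-z\overline\zeta)\,C_i(I-zA)^{-1}(I-\overline\zeta A^*)^{-1}C_j^*$. Writing $S=\Sigma_{11}+G\cE\Sigma_{21}$ with $G=\Sigma_{12}(I-\cE\Sigma_{22})^{-1}$, I would expand $I-S(z)S(\zeta)^*$ into its four pieces and substitute these block formulas. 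The pieces carrying the factor $1-z\overline\zeta$ collect, after factoring $C_1+G(z)\cE(z)C_2$ on the left and its adjoint on the right, into exactly $(1-z\overline\zeta)\,\Gamma(z)\Gamma(\zeta)^*$ with $\Gamma(z)=(C_1+G(z)\cE(z)C_2)(I-zA)^{-1}$. The remaining pieces, after replacing $\Sigma_{12}$ by $G(I-\cE\Sigma_{22})$, telescope: using $(I-\cE(z)\Sigma_{22}(z))+\cE(z)\Sigma_{22}(z)=I$ together with its adjoint form at $\zeta$, the inner factor between $G(z)$ and $G(\zeta)^*$ collapses and one is left with $G(z)\bigl(I-\cE(z)\cE(\zeta)^*\bigr)G(\zeta)^*$. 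Dividing through by $1-z\overline\zeta$ and recalling \eqref{deBRkernel} yields \eqref{5.42}.

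For \eqref{5.44}, the starting point is that, by construction of ${\bf U}$ in \eqref{5.34} from $V$, the vector $\sbm{P^{\frac12}x \\ Nx}$ lies in $\cD_V$ and the $\widetilde\Delta_*$-input is $0$, so applying ${\bf U}$ in the block form \eqref{5.35} to $\sbm{P^{\frac12}x \\ Nx \\ 0}$ returns $\sbm{P^{\frac12}Tx \\ Ex \\ 0}$ for every $x\in\cX$. Reading off components yields the three operator identities $AP^{\frac12}+B_1N=P^{\frac12}T$, $C_1P^{\frac12}+D_{11}N=E$ and $C_2P^{\frac12}+D_{21}N=0$ (all acting from $\cX$, through the space $\cX_0=\overline{\rm Ran}\,P^{\frac12}$ on which $A,B_i,C_i$ are defined). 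Rearranging the first gives the resolvent intertwining $(I-zA)^{-1}P^{\frac12}=P^{\frac12}(I-zT)^{-1}-z(I-zA)^{-1}B_1N(I-zT)^{-1}$. Next I would compute $E-S(z)N$ from $S=\Sigma_{11}+G\cE\Sigma_{21}$ using $\Sigma_{11}(z)=D_{11}+zC_1(I-zA)^{-1}B_1$ and $\Sigma_{21}(z)=D_{21}+zC_2(I-zA)^{-1}B_1$: the $D_{11}N$ term cancels against $E$ by the second identity, and $G(z)\cE(z)D_{21}N$ is rewritten via $D_{21}N=-C_2P^{\frac12}$, leaving $E-S(z)N=(C_1+G(z)\cE(z)C_2)\bigl(P^{\frac12}-z(I-zA)^{-1}B_1N\bigr)$. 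Multiplying on the right by $(I-zT)^{-1}$ and using the intertwining identity to recognize the braced factor times $(I-zT)^{-1}$ as $(I-zA)^{-1}P^{\frac12}$ gives $(E-S(z)N)(I-zT)^{-1}=(C_1+G(z)\cE(z)C_2)(I-zA)^{-1}P^{\frac12}=\Gamma(z)P^{\frac12}$; the middle expression is $F^S(z)$ by definition \eqref{5.17}.

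I expect the main difficulty to be organizational rather than conceptual. In \eqref{5.42} the bookkeeping of the block expansion — tracking which argument ($z$ or $\zeta$) and which adjoints attach to each $\Sigma_{ij}$, and matching each of the four expanded pieces of $I-S(z)S(\zeta)^*$ with the correct block relation from \eqref{5.37} — is where sign and ordering errors are most likely, and the telescoping collapse should be verified term by term. A secondary point needing care is keeping domains and codomains straight in \eqref{5.44}: $A,B_i,C_i$ act on $\cX_0$ while $T,N,E,P^{\frac12}$ involve $\cX$, so the three extracted identities and the resolvent intertwining must be stated on the correct spaces before the final substitution. Once these are in place, both computations are short.
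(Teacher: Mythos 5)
Your proposal is correct: the block expansion of the coisometry identity \eqref{5.37} combined with $\Sigma_{12}=G(I-\cE\Sigma_{22})$ does collapse $I-S(z)S(\zeta)^*$ to $(1-z\overline{\zeta})\Gamma(z)\Gamma(\zeta)^*+G(z)(I-\cE(z)\cE(\zeta)^*)G(\zeta)^*$, and the three identities $AP^{\frac12}+B_1N=P^{\frac12}T$, $C_1P^{\frac12}+D_{11}N=E$, $C_2P^{\frac12}+D_{21}N=0$ read off from \eqref{5.32}--\eqref{5.35} yield \eqref{5.44} exactly as you describe. The survey itself does not prove Proposition \ref{P:5.10} but defers to \cite{bbth2}, where the argument is carried out along essentially the same lines you propose, so there is nothing in the paper your route diverges from.
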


\section{Interpolation in $\cH(S)$}

Interpolation problems in de Branges-Rovnyak spaces have not been considered 
until recently. The lack of interest in this topic can be explained by the fact that 
Hilbert space 
interpolation is well understood and no surprises are expected.  
However the results arising from the general Hilbert-space structure 
can be made much more explicit and concrete for this particular 
setting, as is discussed below.  Much of this Section is based on the 
papers of Ball, Bolotnikov, and ter Horst \cite{bbaip, bbth1, bbth2}.

\smallskip

Throughout this section, the Schur-class function $S\in\cS(\cU,\cY)$ is fixed. 
Hence the space $\cH(S)$ consists of $\cY$-valued functions and every function 
$f\in\cH(S)$ induces the multiplication operator $M_f: \, \C\to \cH(S)$ by the formula 
$M_f\alpha=f(z)\alpha$. In what follows, $A^*$ will denote the adjoint of $A: \, \cX\to 
\cH(S)\subset H^2(\cY)$ in the metric of  $H^2(\cY)$, and $A^{[*]}$ will denote he adjoint 
of $A$ in the metric of $\cH(S)$. Since these metrics are different (unless $S$ is 
inner), the adjoints $A^*$ and $A^{[*]}$ are not equal in general.

\smallskip

As in Section 3, the starting point is  a simple left-tangential
Nevanlinna-Pick problem: {\em Given $n$ distinct points 
$z_1,\ldots,z_n\in\D$  and given  vectors $E_1,\ldots,E_n\in\cY$ and given complex numbers
$y_1,\ldots,y_n$, find a function $f\in\cH(S)$ such that 
\begin{equation}
f(z_i)^*E_i=y_i\quad\mbox{for}\quad i=1,\ldots,n.
\label{6.1}
\end{equation}}
Making use of formula \eqref{ShieldsWallen} one may write the left hand side expression 
in \eqref{6.1} in terms of the adjoint operators $M_f^{*}: \, H^2(\cY)\to \C$  and 
 $M_f^{[*]}: \, \cH(S)\to \C$. Indeed,
\begin{equation}
f(z_i)^*E_i=M_f^{*}E_ik_{\rm 
Sz}(\cdot,z_i)=M_f^{*}(E_i(1-z\bar{z}_i)^{-1})\quad\mbox{for}\quad  
i=1,\ldots,n,
\label{6.2}
\end{equation}
and on the other hand,
\begin{align}
f(z_i)^*E_i&=M_f^{[*]}(K_S(\cdot,z_i)E_i)\notag\\\
&=M_f^{[*]} ((E_i-SS(z_i)^*E_i)k_{\rm Sz}(\cdot,z_i))\notag\\
&=M_f^{[*]} ((E_i-SN_i)k_{\rm Sz}(\cdot,z_i)) 
\quad\mbox{for}\quad  i=1,\ldots,n,\label{6.3}
\end{align}
where  $N_i:=S(z_i)^*E_i$ (recall that the function $S$ is given). Making use of 
matrices \eqref{5.3} and letting ${\bf y}=\begin{bmatrix} y_1 & \ldots & y_n\end{bmatrix}$,
one may rewrite $n$ conditions in \eqref{6.2} and \eqref{6.3} as
$$
M_f^{*}: \, E(I-z T)^{-1}x\mapsto {\bf y}x\quad\mbox{and}\quad
M_f^{[*]}: \; (E-S(z)N)(I-z T)^{-1}x\mapsto {\bf y}x.
$$
respectively, holding for all $x\in\C^n$. Next use the observability operators \eqref{5.4}
and the operator \eqref{5.10} to write the latter equalities in more compact form
$$
M_f^{*}\cO_{E,T}x={\bf y}x\quad\mbox{and}\quad
M_f^{[*]}F^Sx={\bf y}x
$$
or equivalently,
\begin{equation}
\cO_{E,T}^*f={\bf y}^*\quad\mbox{and}\quad (F^S)^{[*]}f={\bf y}^*.
\label{6.4}
\end{equation}
As will be shown below, the two latter conditions are equivalent in a much more general 
situation.
The first condition in \eqref{5.4} looks very much the same as that in \eqref{5.9a}, and 
this 
condition will be used  to formulate the problem {\bf IP} for functions in the space 
$\cH(S)$. 

\medskip

${\bf IP}_{\cH(S)}$: {\em Given a Schur-class function $S\in\cS(\cU,\cY)$, given 
an output-stable pair $(E,T)$ of operators $E\in\cL(\cX, \cY)$ and $T\in\cL(\cX)$, 
and given a functional ${\bf y}\in\cL(\cX,\C)$,  find a function  $f\in\cH(S)$ such that 
$\cO_{E,T}^*f={\bf y}^*$ and $\|f\|_{\cH(S)}\le 1$.}

\medskip

With the data set as above, one can introduce the operator $N\in\cL(\cX,\cU)$ via formula 
\eqref{5.9a}, 
that is, via its adjoint
\begin{equation}
N^*u=\cO_{E,T}^*(Su),\qquad u\in\cU.
\label{6.5}
\end{equation}
Since, $S$ is a Schur-class function, the pair $(N,T)$ is output-stable, and the operator 
$F^S$
given by \eqref{5.10} maps $\cX$ into $\cH(S)$. Since $S$ trivially 
solves the problem {\bf IP} with 
the current choice of $N$, inequality \eqref{5.9} holds by Theorem \ref{T:5.1} while 
equality 
\eqref{5.12} holds by  Theorem \ref{T:5.3}. Equality \eqref{5.12} can be written in the 
operator form 
as 
\begin{equation}
P=(F^S)^{[*]}F^S.
\label{6.6}
\end{equation}
Finally the equalities
 \begin{align*}
    \langle (F^S)^{[*]} f, \, x\rangle_{\cX}
    =\langle f, \, F^Sx\rangle_{\cH(S)}
    =&\langle f, \, (I-T_ST_S^*)\cO_{E,T}x\rangle_{\cH(S)}
    \notag \\
    =&\langle f, \, \cO_{E,T}x\rangle_{H^2(\cY)}=\langle
    \cO_{E,T}^*f, \, x\rangle_{\cX}\notag
    \end{align*}
hold for all $f\in\cH(S)$ and $x\in\cX$. Therefore,
$(F^S)^{[*]}=\cO_{E,T}^*|_{\cH(S)}$ and conditions \eqref{6.4} are equivalent in the 
general setting of the problem {\bf IP}.

\smallskip

As in the Schur-class setting, boundary interpolation problems cannot be embedded into the 
framework
of the problem {\bf IP}. To handle the boundary case, the stability assumption
on the pair $(E,T)$ need be relaxed. If the pair $(E,T)$ is not output-stable, we cannot 
use formula 
\eqref{6.5} to define $N$. Thus, the operator $N$ must be a part of interpolation 
data. Also the interpolation condition $\cO_{E,T}^*f={\bf y}^*$ cannot be formulated in 
this form since 
$\cO_{E,T}$ does not map $\cX$ into 
$H^2(\cY)$ and thus its range is not in $\cH(S)\subset H^2(\cY)$. 
Instead, one can assume that given $S(z), \, E, \, N, \, T$ are such that the operator 
$F^S$ defined as in \eqref{5.10} maps 
$\cX$ into  $\cH(S)$. Under this assumption one may use the second formula in \eqref{6.4} 
as the interpolation condition; on the other hand,  $P$ can be defined via formula 
\eqref{6.5}
instead of \eqref{5.6}. For the reasons already clear from what was seen in the previous 
section,
it makes sense to assume that the Stein identity \eqref{5.7} is in force. 
\begin{definition}
\label{D:6.1}
The data set 
\begin{equation}
{\mathcal D}=\{S, T, E, N, {\bf y}\}
\label{6.7}
\end{equation}
consisting of a Schur-class function $S\in\cS(\cU,\cY)$ and 
operators $T\in\cL(\cX)$, $E\in\cL(\cX,\cY)$, $N\in\cL(\cX,\cU)$, and
${\bf y}\in\cL(\cX,\C)$. is said to be ${\bf AIP}_{\cH(S)}$-admissible if:
\begin{enumerate}
\item The function $\begin{bmatrix}E \\  N\end{bmatrix}(I-zT)^{-1}x$ is holomorphic on
$\D$ for each $x\in\cX$.
\item The operator $F^S$ \eqref{5.10} maps $\cX$ into $\cH(S)$.
\item The operator $P:=(F^S)^{[*]}F^S$ satisfies the Stein equation \eqref{5.7}.
\end{enumerate}
\end{definition}
These preparations lead to the formulation of the problem ${\bf AIP}_{\cH(S)}$:

\medskip

{\em Given an ${\bf AIP}_{\cH(S)}$-admissible
data set \eqref{6.7}, find all $f\in\cH(S)$ such that
\begin{equation}
M_{F^S}^{[*]}f={\bf y}^*\quad\mbox{and}\quad\|f\|_{\cH(S)}\le 1.
\label{6.8}
\end{equation}}
The discussion preceding Definition \ref{D:6.1} shows that the problem ${\bf IP}_{\cH(S)}$
is a particular case of the problem ${\bf AIP}_{\cH(S)}$.

\subsection{The problem ${\bf AIP}_{\cH(S)}$ as a linear operator equation}
Consider the following operator interpolation problem with 
norm constraint: {\em Given Hilbert space operators $A\in\cL(\cY,\cX)$ and
$B\in\cL(\cU,\cX)$, find all $X\in\cL(\cU,\cY)$ that satisfy the conditions}
\begin{equation}
\label{6.9}
AX = B \quad\text{\em and}\quad \|X\| \le 1.
\end{equation}
According to the Douglas lemma \cite{Douglas65},
there is an $X\in\cL(\cU,\cY)$ satisfying \eqref{6.9} if and only
if $AA^*\ge BB^*$. If this is the case, then there exist (unique) contractions 
$X_1\in\cL(\cU,\overline{{\rm Ran}}A)$ and $X_2\in\cL(\cY,\overline{\rm Ran}A)$
such that
\begin{equation}\label{6.10}   
(AA^*)^{\frac{1}{2}}X_1=B,\quad
(AA^*)^{\frac{1}{2}}X_2=A, \quad {\rm Ker} X_1={\rm Ker} B,\quad
{\rm Ker} X_2={\rm Ker} A.
\end{equation}
The next  characterization of all operators $X$ subject to \eqref{6.9}
can be found in \cite{bbth2}.
\begin{lemma}
Assume $A A^{*}\geq BB^{*}$ and let $X\in\cL(\cU,\cY)$.
Then the following statements are equivalent:
\begin{enumerate}
\item $X$ satisfies conditions \eqref{6.9}.

\item The operator
\begin{equation}\label{6.11}
\begin{bmatrix}  I_{\cH_{1}} & B^{*} & X^{*} \\
B & A A^{*} & A \\ X & A^{*} & I_{\cH_{2}} \end{bmatrix}: \;
\begin{bmatrix}\cU\\\cX\\\cY\end{bmatrix}\to
\begin{bmatrix}\cU\\\cX\\\cY\end{bmatrix}
\end{equation}
is positive semidefinite.

\item $X$ is of the form
\begin{equation}\label{6.12}
X=X_2^*X_1+(I-X_2^*X_2)^{\frac{1}{2}}K (I-X_1^*X_1)^{\frac{1}{2}}
\end{equation}
where $X_1$ and $X_2$ are defined as in \eqref{6.10} and
where the parameter $K$ is an arbitrary contraction from
$\overline{\rm Ran}(I-X_1^*X_1)$ into $\overline{\rm Ran}(I-X_2^*X_2)$.
\end{enumerate}
Moreover, if $X$ satisfies \eqref{6.9}, then $X$ is unique if and only if
$X_1$ is isometric on $\cU$ or $X_2$ is isometric on $\cY$.
\label{L:6.2}
\end{lemma}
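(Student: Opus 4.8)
The plan is to prove the equivalences $(1)\Leftrightarrow(2)\Leftrightarrow(3)$ essentially by reducing to a canonical factored situation via the operators $X_1,X_2$ from \eqref{6.10}, and then to settle the uniqueness statement by a direct inspection of the parametrization \eqref{6.12}. Throughout I would write $\cH_1=\cU$, $\cH_2=\cY$, and $Q:=(AA^*)^{1/2}$, so that $\overline{\operatorname{Ran}}\,A=\overline{\operatorname{Ran}}\,Q$ and $QX_1=B$, $QX_2=A$ with the stated kernel conditions; note that $AA^*\ge BB^*$ guarantees $X_1,X_2$ exist as genuine contractions by the Douglas lemma, so the starting hypothesis is exactly what makes everything below well posed.

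First I would do $(1)\Rightarrow(3)$. Given $X$ with $AX=B$ and $\|X\|\le1$, substitute $A=QX_2$, $B=QX_1$ to get $QX_2X=QX_1$; since $\ker Q=(\overline{\operatorname{Ran}}\,Q)^\perp$ and $X_2X,\,X_1$ have ranges in $\overline{\operatorname{Ran}}\,Q$, we may cancel $Q$ on the left-hand side when paired appropriately, obtaining $X_2X=X_1$ (this is the one spot that needs a small argument: $Qv=Qw$ with $v,w\in\overline{\operatorname{Ran}}\,Q$ forces $v=w$). Now $X_1=X_2X$ with $\|X\|\le1$ and $\|X_2\|\le1$ is precisely a Leech/Douglas-type factorization situation for the pair $(X_2,X)$ relative to $X_1$, and the general structure of all contractive solutions of such a factorization — $X=X_2^*X_1+(I-X_2^*X_2)^{1/2}K(I-X_1^*X_1)^{1/2}$ with $K$ an arbitrary contraction between the indicated defect spaces — is the standard parametrization (one direction: from $X_2X=X_1$ one checks $(I-X_2^*X_2)^{1/2}X=(I-X_1^*X_1)^{1/2}K$ for a contraction $K$, using $\|X\|\le1$; conversely \eqref{6.12} always yields a contraction $X$ with $X_2X=X_1$ by a direct computation of $\|X u\|^2$). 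Then $(3)\Rightarrow(1)$ is the converse computation: any $X$ of the form \eqref{6.12} is a contraction and satisfies $X_2X=X_1$, hence $QX_2X=QX_1$, i.e.\ $AX=B$.

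Next, $(1)\Leftrightarrow(2)$. For $(1)\Rightarrow(2)$, factor the middle row/column: the displayed $3\times3$ block operator in \eqref{6.11} equals
\begin{equation*}
\begin{bmatrix} I & 0 & 0\\ B & Q & 0\\ X & 0 & I\end{bmatrix}
\begin{bmatrix} I & X_1^* & X^*\\ X_1 & I & X_2\\ X & X_2^* & I\end{bmatrix}
\begin{bmatrix} I & B^* & X^*\\ 0 & Q & 0\\ 0 & 0 & I\end{bmatrix}^{*}\!\!,
\end{equation*}
using $B=QX_1$, $A=QX_2$, $AA^*=Q^2$; more carefully, a congruence reduces \eqref{6.11} to positivity of $\sbm{I & X_1^* & X^*\\ X_1 & I & X_2\\ X & X_2^* & I}$, and when $X=X_2^*X_1+D_2 K D_1$ with $D_i$ the defect operators this $3\times3$ matrix is manifestly positive semidefinite (it is a Gram-type matrix). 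For $(2)\Rightarrow(1)$, take Schur complements of \eqref{6.11} in two orders: complementing the $(1,1)$-block $I_\cU$ forces $\sbm{AA^*-BB^* & A-BX^*\\ A^*-XB^* & I-XX^*}\ge0$, whose $(2,2)$ entry gives $\|X\|\le1$, and complementing the $(3,3)$-block $I_\cY$ similarly forces $\sbm{I-X^*X & B^*-X^*A\\ B-A^*X & AA^*-A^*A^{?}}$— more simply, the off-diagonal consequence $B-AX$ sits in a zero block of the resulting Schur complement exactly as in the proof of Theorem~\ref{T:5.3}, yielding $AX=B$.

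Finally, the uniqueness clause: from \eqref{6.12}, $X$ is unique among solutions iff the parameter $K$ is forced, i.e.\ iff $(I-X_1^*X_1)^{1/2}=0$ or $(I-X_2^*X_2)^{1/2}=0$ on the relevant spaces, equivalently $X_1$ is isometric on $\cU$ or $X_2$ is isometric on $\cY$; one must also check that when \emph{neither} is isometric there genuinely are at least two distinct valid $K$ (e.g.\ $K=0$ and $K=$ a nonzero contraction), which is immediate since the defect spaces are then nonzero. I expect the main obstacle to be the careful bookkeeping in $(2)\Rightarrow(1)$ — getting the two Schur complements and the cancellation of $Q$ on closed ranges exactly right, since one is working with possibly non-closed ranges and the operators $X_1,X_2$ are only partial isometries onto $\overline{\operatorname{Ran}}\,A$ — whereas $(1)\Leftrightarrow(3)$ is the familiar contractive-factorization parametrization and $(1)\Rightarrow(2)$ is a routine congruence.
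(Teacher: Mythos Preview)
The paper does not actually prove this lemma; it simply records the statement and cites \cite{bbth2} for the proof. So there is no ``paper proof'' to compare against, and your outline must stand on its own.

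Your argument for $(1)\Leftrightarrow(3)$ is correct and is the standard contractive-factorization parametrization: from $AX=B$ you get $X_2X=X_1$ (cancelling $Q$ on $\overline{\operatorname{Ran}}\,Q$ as you note), and the fact that $X_2$ is a \emph{coisometry} onto $\overline{\operatorname{Ran}}\,A$ (so $I-X_2^*X_2$ is a projection) makes the orthogonal decomposition $X=X_2^*X_1+(I-X_2^*X_2)^{1/2}K(I-X_1^*X_1)^{1/2}$ and the norm computation go through cleanly. The uniqueness clause is also fine; distinct $K$'s give distinct $X$'s because $(I-X_2^*X_2)^{1/2}$, being a projection, is injective on its range, while $(I-X_1^*X_1)^{1/2}$ has dense range in the domain of $K$.

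The part that needs repair is $(1)\Rightarrow(2)$. Your displayed three-factor product is not a valid congruence: multiplying out the left factor against the middle one already produces a $2X$ in the $(3,1)$ entry, so the identity fails. The fallback ``a congruence reduces \eqref{6.11} to $\sbm{I & X_1^* & X^*\\ X_1 & I & X_2\\ X & X_2^* & I}$'' also requires $Q=(AA^*)^{1/2}$ to be invertible, which is not assumed. The clean fix is to drop the congruence entirely and use the \emph{same} Schur complement you already invoke for $(2)\Rightarrow(1)$: complementing the $(3,3)$-block $I_\cY$ gives
\[
\begin{bmatrix} I-X^*X & B^*-X^*A^*\\ B-AX & AA^*-AA^*\end{bmatrix}
=\begin{bmatrix} I-X^*X & B^*-X^*A^*\\ B-AX & 0\end{bmatrix},
\]
so \eqref{6.11} is positive semidefinite if and only if this $2\times 2$ block is. Under (1) the off-diagonal vanishes and the $(1,1)$ entry is $\ge 0$; conversely, a positive block matrix with zero $(2,2)$ entry forces the off-diagonal to vanish, giving $AX=B$, and then $I-X^*X\ge0$. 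This handles both directions at once and avoids the range/inverse issues you flagged as the main obstacle. (Your expression ``$AA^*-A^*A^{?}$'' should indeed be $AA^*-AA^*=0$.)
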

\begin{remark}  \label{R:6.3}
{\rm It follows from \eqref{6.12} that there is a unique $X$ subject to conditions
\eqref{6.9} if and only if $X_1$ is isometric on $\cU$ or $X_2$ is isometric on $\cY$.
Furthermore, since $X_2$ is a coisometry, it follows that
$(I-X_2^*X_2)^{\frac{1}{2}}$ is the
orthogonal projection onto $\cU\ominus{\rm Ker} A=\cU\ominus{\rm Ker}
X_1$. This implies that for each $K$ in \eqref{6.12} and  each $u\in\cU$, 
$$
\|Xu\|^2=\|X_2^*X_1 u\|^2+\|(I-X_2^*X_2)^{\frac{1}{2}}K
(I-X_1^*X_1)^{\frac{1}{2}}u\|^2,
$$
so that $X_2^*X_1$ is the minimal norm solution to the problem
\eqref{6.9} (see \cite{bbth2})}.
\end{remark}
Upon specifying the preceding discussion to the case where 
\begin{equation}
A=(F^S)^{[*]}:\, \cH(K_S)\to\cX,\quad B ={\bf y}^*\in\cX \cong\cL(\C,\cX),
\label{6.13}
\end{equation}
then it is readily seen that solutions $X \colon \C\to\cH(S)$ to problem \eqref{6.9} 
necessarily
have the form of a multiplication operator $M_f$ for some function
$f\in \cH(S)$.  This observation leads to the following solvability criterion.
\begin{theorem}
\label{T:6:4}
The problem ${\bf AIP}_{\cH(S)}$ has a solution if and only if
\begin{equation}
P\ge {\bf y}^*{\bf y},\quad\mbox{where}\quad
P:=(F^S)^{[*]}F^S.
\label{6.14}
\end{equation}
\end{theorem}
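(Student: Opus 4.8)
The plan is to reduce the problem ${\bf AIP}_{\cH(S)}$ to the abstract operator interpolation problem \eqref{6.9} and then invoke the Douglas lemma together with Lemma \ref{L:6.2}. First I would observe that, as noted in \eqref{6.13}, a solution $f\in\cH(S)$ to the problem corresponds exactly to a solution $X=M_f\colon\C\to\cH(S)$ of \eqref{6.9} with $A=(F^S)^{[*]}$ and $B={\bf y}^*$: indeed, the interpolation condition $M_{F^S}^{[*]}f={\bf y}^*$ in \eqref{6.8} is precisely $A M_f=B$ (here one uses that $M_{F^S}^{[*]}f$ can be rewritten as $(F^S)^{[*]}M_f$ applied to $1\in\C$), and the norm constraint $\|f\|_{\cH(S)}\le 1$ is exactly $\|M_f\|\le 1$ since $\|M_f\|=\|f\|_{\cH(S)}$. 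Conversely, any $X\in\cL(\C,\cH(S))$ is of the form $M_f$ for $f=X(1)\in\cH(S)$, so the two problems are literally the same.

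Next I would apply the Douglas lemma: \eqref{6.9} has a solution if and only if $AA^*\ge BB^*$. With the identifications above, $BB^*={\bf y}^*{\bf y}$ (viewing ${\bf y}^*\in\cX\cong\cL(\C,\cX)$), so the content is to identify $AA^*=(F^S)^{[*]}((F^S)^{[*]})^*$ with $P$. Here the only subtlety is keeping straight the two adjoints: $A=(F^S)^{[*]}\colon\cH(S)\to\cX$, where $(F^S)^{[*]}$ denotes the adjoint of $F^S\colon\cX\to\cH(S)$ taken \emph{in the $\cH(S)$ inner product}. Its adjoint $A^*$ (now in the Hilbert-space sense between $\cX$ and $\cH(S)$, both with their own inner products) is simply $F^S$ itself, because $(F^S)^{[*]}$ is already the $\cH(S)$-adjoint of $F^S$. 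Hence $AA^*=(F^S)^{[*]}F^S=P$, which is exactly the operator $P$ appearing in condition (3) of Definition \ref{D:6.1} and in \eqref{6.6}. Therefore the Douglas criterion $AA^*\ge BB^*$ becomes precisely $P\ge{\bf y}^*{\bf y}$, which is \eqref{6.14}.

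I expect the main (and really only) obstacle to be the bookkeeping of adjoints in the step $AA^*=P$: one must be careful that $A=(F^S)^{[*]}$ already incorporates the $\cH(S)$-metric, so that forming its ordinary adjoint returns $F^S$ and the composition collapses to $(F^S)^{[*]}F^S=P$ rather than to something involving the $H^2(\cY)$-metric. Once this identification is in place, the equivalence is immediate from the Douglas lemma, and Lemma \ref{L:6.2} (applied with $\cU=\C$, $\cY=\cH(S)$) in fact gives more — the equivalent positivity reformulation via \eqref{6.11} and the parametrization \eqref{6.12} of the full solution set — but for the present statement only the existence half (parts (1) $\Leftrightarrow$ the condition $AA^*\ge BB^*$) is needed. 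I would close by remarking that admissibility assumptions (2) and (3) of Definition \ref{D:6.1} are exactly what guarantee that $A=(F^S)^{[*]}$ is a well-defined bounded operator on $\cH(S)$ and that $P=(F^S)^{[*]}F^S$ is the relevant positive operator, so that the reduction to \eqref{6.9} is legitimate.
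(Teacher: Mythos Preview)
Your proposal is correct and follows exactly the approach the paper takes: the paper derives Theorem~\ref{T:6:4} directly from the Douglas lemma by specializing \eqref{6.9} to $A=(F^S)^{[*]}$ and $B={\bf y}^*$ as in \eqref{6.13}, noting that any $X\in\cL(\C,\cH(S))$ is $M_f$ for some $f\in\cH(S)$. Your explicit bookkeeping of the adjoints (so that $AA^*=(F^S)^{[*]}F^S=P$) is the only detail the paper leaves implicit, and you have it right.
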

Assuming for simplicity that the operator $P$ is strictly positive definite,
it is readily  seen that
$$
X_1=P^{-\frac{1}{2}}{\bf y}^*\in\cX,\quad
X_2=P^{-\frac{1}{2}}(F^S)^{[*]}\in\cL(\cH(S),\cX)
$$
are the operators $X_1$ and $X_2$ from \eqref{6.10} after specialization to the case 
\eqref{6.13}.
The conclusion from \eqref{6.12} is that all solutions $f$ to the problem ${\bf 
AIP}_{\cH(S)}$ are 
given by the formula
\begin{equation}
f=F^SP^{-1}{\bf y}^*+\sqrt{1-\|P^{-\frac{1}{2}}{\bf y}^*\|^2}\cdot
(I-F^SP^{-1}(F^S)^{[*]})^{\frac{1}{2}}K
\label{6.15}
\end{equation}
where $K$ is a function from the unit ball of the space ${\rm 
Ran}(I-F^SP^{-1}(F^S)^{[*]})^{\frac{1}{2}}$. The latter space is in fact the reproducing 
kernel 
Hilbert space with reproducing kernel 
\begin{equation}
\widetilde{K}_S(z,\zeta)= K_S(z,\zeta)-F^S(z)P^{-1}F^S(\zeta)^*
\label{6.16}
\end{equation}
and the second term on the right side of \eqref{6.15} is nothing else but a function 
$h\in\cH(\widetilde{K}_S)$ such that 
\begin{equation}
\|h\|_{\cH(\widetilde{K}_S)}\le \sqrt{1-\|P^{-\frac{1}{2}}{\bf y}^*\|^2}.
\label{6.17}
\end{equation}
\begin{theorem}
\label{T:6.5}
Assume that condition \eqref{6.14} holds and that $P$ is strictly positive definite.
Let $\widetilde{K}_S$ be the kernel defined in \eqref{6.16}. Then
all solutions $f$ to the problem ${\bf AIP}_{\cH(S)}$ are described by the formula
\begin{equation}
f(z)=F^S(z)P^{-1}{\bf y}^*+h(z)
\label{6.18}
\end{equation}  
where $h$ is a free parameter from $\cH(\widetilde{K}_S)$  subject to norm constraint 
\eqref{6.17}.
The problem ${\bf AIP}_{\cH(S)}$ has a unique solution if and only if
$\|P^{-\frac{1}{2}}{\bf y}^*\|=1$ or $\widetilde{K}_S(z,\zeta)\equiv 0$.
\end{theorem}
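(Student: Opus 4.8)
\textbf{Plan for the proof of Theorem \ref{T:6.5}.}
The plan is to specialize the abstract description in Lemma \ref{L:6.2} to the concrete situation recorded in \eqref{6.13}, where $A=(F^S)^{[*]}$ and $B={\bf y}^*$, under the extra hypothesis that $P=(F^S)^{[*]}F^S=AA^*$ is strictly positive definite. First I would note that Theorem \ref{T:6:4} already identifies the solvability condition as $P\ge {\bf y}^*{\bf y}$, so it remains to produce the parametrization \eqref{6.18} and to settle uniqueness. The strategy is to compute the operators $X_1$ and $X_2$ of \eqref{6.10} explicitly: since $AA^*=P$ is invertible, one has $(AA^*)^{1/2}=P^{1/2}$ with trivial kernel, so the defining relations $(AA^*)^{1/2}X_1=B$ and $(AA^*)^{1/2}X_2=A$ force $X_1=P^{-1/2}{\bf y}^*$ and $X_2=P^{-1/2}(F^S)^{[*]}$, exactly as displayed in the excerpt before the theorem statement.

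Next I would feed these into formula \eqref{6.12}. Since $P$ is invertible, $X_2=P^{-1/2}(F^S)^{[*]}$ is a coisometry (one checks $X_2X_2^*=P^{-1/2}(F^S)^{[*]}F^SP^{-1/2}=I$), hence $I-X_2^*X_2$ is the orthogonal projection onto $(\operatorname{Ker}A)^\perp=(\operatorname{Ker}X_1)^\perp$ and $(I-X_2^*X_2)^{1/2}$ equals that same projection. The first term $X_2^*X_1=F^SP^{-1/2}\cdot P^{-1/2}{\bf y}^*=F^SP^{-1}{\bf y}^*$ is the minimal-norm solution; as an operator $\C\to\cH(S)$ it is multiplication by the function $z\mapsto F^S(z)P^{-1}{\bf y}^*$. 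The second term of \eqref{6.12} is $(I-X_2^*X_2)^{1/2}K(I-X_1^*X_1)^{1/2}$ with $K$ an arbitrary contraction between the appropriate defect spaces; applied to the scalar $1\in\C$ this yields, after absorbing the scalar factor $(I-X_1^*X_1)^{1/2}=(1-\|P^{-1/2}{\bf y}^*\|^2)^{1/2}$, an arbitrary element of the range of $(I-X_2^*X_2)^{1/2}$ of norm at most $(1-\|P^{-1/2}{\bf y}^*\|^2)^{1/2}$. The remaining point is to recognize this range: using that multiplication operators compose as stated in the paragraph before \eqref{6.13}, one has $(I-X_2^*X_2)$ acting on $\cH(S)$-valued data as $I-F^SP^{-1}(F^S)^{[*]}$, so by \eqref{1.14u}--\eqref{1.15u} (lifted-norm/complementation theory) its square-root range with lifted norm is the reproducing kernel Hilbert space $\cH(\widetilde K_S)$ with kernel \eqref{6.16}, and the lifted-norm inequality is precisely \eqref{6.17}. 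Thus every solution is $f=F^SP^{-1}{\bf y}^*+h$ with $h\in\cH(\widetilde K_S)$, $\|h\|_{\cH(\widetilde K_S)}\le(1-\|P^{-1/2}{\bf y}^*\|^2)^{1/2}$, which is \eqref{6.18}.

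For uniqueness I would invoke the last sentence of Lemma \ref{L:6.2}: the solution is unique iff $X_1$ is isometric on $\C$ or $X_2$ is isometric on $\cH(S)$. Since $X_2$ is a coisometry, $X_2$ isometric means $X_2$ unitary, i.e. $X_2^*X_2=I$, i.e. $I-F^SP^{-1}(F^S)^{[*]}=0$, i.e. $\widetilde K_S\equiv 0$. And $X_1=P^{-1/2}{\bf y}^*\in\cX$ is isometric on the one-dimensional space $\C$ iff $\|P^{-1/2}{\bf y}^*\|=1$. Combining gives the stated uniqueness dichotomy.

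\textbf{Main obstacle.} The computations producing $X_1,X_2$ and the algebra of \eqref{6.12} are routine once $P$ is invertible; the genuinely delicate step is the identification of $\operatorname{Ran}(I-F^SP^{-1}(F^S)^{[*]})^{1/2}$, equipped with its lifted (operator-range) norm, as the reproducing kernel Hilbert space $\cH(\widetilde K_S)$. This requires applying the complementation/lifted-norm machinery \eqref{def2HS}--\eqref{1.15u} not to $M_S$ itself but to the operator $I-F^SP^{-1}(F^S)^{[*]}$ on $\cH(S)$, and checking via the reproducing formula that the resulting kernel is exactly \eqref{6.16}; this is the part where one must be careful to distinguish the adjoint $(F^S)^*$ in the $H^2(\cY)$ metric from $(F^S)^{[*]}$ in the $\cH(S)$ metric (cf.\ the remarks at the start of Section 4 and the identity $(F^S)^{[*]}=\cO_{E,T}^*|_{\cH(S)}$), so that $F^SP^{-1}(F^S)^{[*]}$ really is a self-adjoint operator on $\cH(S)$ and the projection interpretation of $(I-X_2^*X_2)^{1/2}$ is legitimate.
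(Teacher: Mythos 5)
Your proposal is correct and follows essentially the same route as the paper: Theorem \ref{T:6.5} is obtained there precisely by specializing Lemma \ref{L:6.2} to $A=(F^S)^{[*]}$, $B={\bf y}^*$, computing $X_1=P^{-1/2}{\bf y}^*$ and $X_2=P^{-1/2}(F^S)^{[*]}$, identifying $\operatorname{Ran}\bigl(I-F^SP^{-1}(F^S)^{[*]}\bigr)^{1/2}$ with $\cH(\widetilde K_S)$ as in \eqref{6.15}--\eqref{6.17}, and reading off the uniqueness dichotomy from the last sentence of the lemma. One small correction: since $X_2X_2^*=I$, the operator $I-X_2^*X_2=I-F^SP^{-1}(F^S)^{[*]}$ is the orthogonal projection of $\cH(S)$ onto $\operatorname{Ker}(F^S)^{[*]}$, i.e.\ onto the orthogonal complement of $\operatorname{Ran}F^S$ in $\cH(S)$, not onto $(\operatorname{Ker}A)^\perp$ (a slip mirroring a typo in Remark \ref{R:6.3}); this is harmless because what your argument actually uses is the correct identification of that range, with the lifted norm, as $\cH(\widetilde K_S)$.
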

\begin{remark} 
{\rm The function $h$ on the right hand side of \eqref{6.18} represents
in fact the general solution of the homogeneous interpolation problem
(with interpolation condition $(F^S)^{[*]}f=0$). If $h$ runs through
the whole space $\cH(\widetilde{K}_S)$, then formula \eqref{6.18}
produces all functions $f\in\cH(S)$ such that $(F^S)^{[*]}f={\bf y}^*$.
This unconstrained interpolation problem has a solution if and only if 
${\bf y}^*\in{\rm Ran} P^{\frac{1}{2}}$ and has a unique solution
if and only if $\widetilde{K}_S(z,\zeta)\equiv 0$.} 
\label{R:6.6}
\end{remark}
Parametrization of the form \eqref{6.18} is typical for interpolation problems 
in reproducing kernel Hilbert spaces. The most interesting part in this topic
is to get a more detailed characterization of all solutions of the homogeneous problem.
For the case $S\equiv 0$, such a characterization is given by Beurling-Lax theorem.
It is quite remarkable that an analog of the Beurling-Lax theorem holds in general
de Branges-Rovnyak space. For getting these analogs the assumption (3) in Definition 
\ref{D:6.1}
(which has not been used so far) is crucial.

\subsection{Analytic descriptions of the solution set} 
Following the strategy from Section 3,
the starting point is the analog of Theorem \ref{5.3} for the current Hilbert space setting. 

\begin{theorem}
A function $f: \, \D\to \cY$ is a solution of the problem ${\bf AIP}_{\cH(S)}$ with data 
set 
\eqref{6.7} if and only if the kernel
\begin{equation}
{\bf K}(z,\zeta)=\begin{bmatrix} 1 & {\bf y} & f(\zeta)^*\\
{\bf y}^* & P & F^S(\zeta)^{*} \\ f(z) & F^S(z) & K_S(z,\zeta)
\end{bmatrix}\qquad (z,\zeta\in\D),
\label{6.19}
\end{equation}
is positive on $\D\times\D$. Here $P$, $F^S$ and $K_S$ are given by
\eqref{6.6}, \eqref{5.17} and \eqref{deBRkernel}, respectively.
\label{T:6.7}
\end{theorem}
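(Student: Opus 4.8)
The plan is to reduce Theorem~\ref{T:6.7} to the purely operator-theoretic Lemma~\ref{L:6.2} and then convert the operator inequality it produces into the kernel positivity \eqref{6.19} by testing against reproducing-kernel elements of $\cH(S)$, exactly as in the passage $(4)\Leftrightarrow(5)$ of the proof of Theorem~\ref{T:5.3}.

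First I would specialize Lemma~\ref{L:6.2} to the data \eqref{6.13}: $A=(F^S)^{[*]}\colon\cH(S)\to\cX$, $B={\bf y}^*\colon\C\to\cX$, with unknown $X=M_f\colon\C\to\cH(S)$. Here $AA^*=(F^S)^{[*]}F^S=P$ by \eqref{6.6} and $A^*=F^S$, and $f$ solves ${\bf AIP}_{\cH(S)}$ precisely when $M_f$ is a bounded operator satisfying \eqref{6.9}. Granting $AA^*\ge BB^*$ (equivalently \eqref{6.14}), Lemma~\ref{L:6.2} then asserts that $f$ is a solution if and only if the bounded self-adjoint operator
$$
{\bf P}:=\bbm{1 & {\bf y} & M_f^{[*]}\\ {\bf y}^* & P & (F^S)^{[*]}\\ M_f & F^S & I_{\cH(S)}}\colon \bbm{\C\\ \cX\\ \cH(S)}\to\bbm{\C\\ \cX\\ \cH(S)}
$$
is positive semidefinite. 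The hypothesis $AA^*\ge BB^*$ comes for free: if $f$ solves the problem then $BB^*=AM_fM_f^*A^*\le AA^*$; and if \eqref{6.19} is positive, its $\{1,2\}$ principal block is the constant kernel $\sbm{1 & {\bf y}\\ {\bf y}^* & P}\ge0$, whence $P\ge{\bf y}^*{\bf y}$ by a Schur complement, while its $\{1,3\}$ block together with Proposition~\ref{bourb} (applied with $\gamma=1$) shows $f\in\cH(S)$ with $\|f\|_{\cH(S)}\le1$, so that $M_f$ is genuinely a bounded operator and the reduction is legitimate.

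The key step is the identity
$$
\langle {\bf P}\,w,\,w\rangle_{\C\oplus\cX\oplus\cH(S)}=\sum_{j,\ell=1}^r\left\langle {\bf K}(z_j,z_\ell)\sbm{\alpha_\ell\\ x_\ell\\ y_\ell},\;\sbm{\alpha_j\\ x_j\\ y_j}\right\rangle_{\C\oplus\cX\oplus\cY}
$$
holding for every $w=\sum_{j=1}^r\sbm{\alpha_j\\ x_j\\ K_S(\cdot,z_j)y_j}$ with $\alpha_j\in\C$, $x_j\in\cX$, $y_j\in\cY$, $z_j\in\D$. To check it one uses the reproducing property $\langle K_S(\cdot,z_\ell)y_\ell,K_S(\cdot,z_j)y_j\rangle_{\cH(S)}=\langle K_S(z_j,z_\ell)y_\ell,y_j\rangle_\cY$ for the $(3,3)$ entry; the relations $(F^S)^{[*]}(K_S(\cdot,z)y)=F^S(z)^*y$ and $M_f^{[*]}(K_S(\cdot,z)y)=f(z)^*y$, immediate from the definitions of the $\cH(S)$-adjoints and the reproducing formula, for the entries involving $(F^S)^{[*]}$ and $M_f^{[*]}$; and $\langle F^Sx_\ell,K_S(\cdot,z_j)y_j\rangle_{\cH(S)}=\langle F^S(z_j)x_\ell,y_j\rangle_\cY$, $\langle M_f\alpha_\ell,K_S(\cdot,z_j)y_j\rangle_{\cH(S)}=\alpha_\ell\langle f(z_j),y_j\rangle_\cY$ for the mixed terms. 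Matching the nine resulting groups of terms against the nine blocks of \eqref{6.19} yields the identity.

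Both implications now follow at once. If ${\bf P}\ge0$, the right-hand side above is nonnegative for every choice of $r$, $z_j$, $\alpha_j$, $x_j$, $y_j$, which is exactly the assertion that ${\bf K}(z,\zeta)$ is a positive kernel on $\D\times\D$. Conversely, if ${\bf K}(z,\zeta)$ is positive, then $\langle{\bf P}w,w\rangle\ge0$ for every $w$ of the displayed form; since these $w$ span a dense subspace of $\C\oplus\cX\oplus\cH(S)$ (the functions $K_S(\cdot,z)y$ being dense in $\cH(S)$) and ${\bf P}$ is bounded and self-adjoint — $F^S$ is bounded by the closed graph theorem under the admissibility hypotheses, $P=(F^S)^{[*]}F^S$ is bounded, and $\|M_f\|=\|f\|_{\cH(S)}<\infty$ — continuity forces ${\bf P}\ge0$, and Lemma~\ref{L:6.2} then gives $M_{F^S}^{[*]}f={\bf y}^*$ and $\|f\|_{\cH(S)}\le1$, i.e.\ $f$ solves ${\bf AIP}_{\cH(S)}$. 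I expect the only real work to be the bookkeeping in the displayed identity — in particular keeping the $H^2(\cY)$-adjoint $(\cdot)^*$ carefully distinct from the $\cH(S)$-adjoint $(\cdot)^{[*]}$ — together with the routine density-and-continuity step; no difficulty arises beyond that already met in Theorem~\ref{T:5.3}.
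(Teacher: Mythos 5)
Your proposal is correct and follows essentially the same route as the paper's proof: specialize Lemma \ref{L:6.2} to the data \eqref{6.13} with $X=M_f$, then convert positivity of the resulting $3\times 3$ operator ${\bf P}$ into positivity of the kernel \eqref{6.19} via the quadratic-form identity on vectors $\sum_j\sbm{\alpha_j\\ x_j\\ K_S(\cdot,z_j)y_j}$ and a density argument. You in fact supply two details the paper's sketch leaves implicit — verifying the hypothesis $AA^*\ge BB^*$ of Lemma \ref{L:6.2} in both directions, and extracting $f\in\cH(S)$, $\|f\|_{\cH(S)}\le 1$ from the $\{1,3\}$ principal block via Proposition \ref{bourb} so that $M_f$ is a legitimate bounded operator before ${\bf P}$ is formed.
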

\begin{proof}
By Lemma \ref{L:6.2} specialized to $A$ and $B$ as in \eqref{6.13}
and $X=M_f$, One can now conclude that $f$ is a solution to the problem ${\bf 
AIP}_{\cH(S)}$
(that is, it meets conditions \eqref{6.8}) if and only if the following
operator is positive semidefinite:
$$
{\bf P}:=\begin{bmatrix}1 & {\bf y} & M_f^{[*]}
\\ {\bf y}^* & (F^S)^{[*]}F^S &  (F^S)^{[*]} \\ M_f &  F^S
& I_{\cH(S)} \end{bmatrix}= \begin{bmatrix}1 & {\bf y} & M_f^{[*]} \\
{\bf y}^* & P &  (F^S)^{[*]} \\ M_f &  F^S & I_{\cH(S)}
\end{bmatrix}\ge 0.
$$
As in the proof of Theorem \ref{T:5.3}, it is useful to observe that for every 
$g\in\C\oplus{\mathcal X}\oplus \cH(S)$ of the form
\begin{equation}
g(z)=\sum_{j=1}^r\left[\begin{array}{c}c_j \\ x_j\\
K_S(\cdot,z_j)y_j\end{array}\right]\qquad
(c_j\in\C, \; y_j\in\cY, \; x_j\in{\mathcal X}, \; z_j\in\D)
\label{6.20}
\end{equation}
the identity
\begin{equation}
\left\langle {\bf P}g, \; g\right\rangle_{\C\oplus\cX\oplus  
\cH(S)}=\sum_{j,\ell=1}^r \left\langle{\bf K}(z_j,z_\ell)
\sbm{c_\ell \\ x_\ell \\ y_\ell}, \; \sbm{c_j \\ x_j \\ y_j}
\right\rangle_{\C\oplus{\mathcal X}\oplus \cY}
\label{6.21}
\end{equation}
holds. Since the set of
vectors of the form \eqref{6.20} is dense in $\C\oplus{\mathcal X}\oplus
\cH(S)$, the identity \eqref{6.21} now implies that the operator
${\bf P}$ is positive semidefinite if and only if the quadratic form on the
right hand side of \eqref{6.21} is nonnegative, i.e., if and only if
the kernel \eqref{6.19} is positive on $\D\times\D$.
\end{proof}
The next observation is that for any ${\bf AIP}_{\cH(S)}$-admissible data set \eqref{6.7}, 
the Schur-class 
function  $S$ is a solution of the Schur-class problem {\bf AIP} with the data set 
$\{T,E,N,P=(F^S)^{[*]}F^S\}$. If $P$ is strictly
positive definite, then by Theorem \ref{T:5.6}, $S$ is necessarily of the form 
\eqref{5.27} for a 
$J$-inner function $\Theta$ explicitly constructed from the data set and a Schur-class 
function 
$\cE\in\cS(\cU,\cY)$
which is recovered from $S$ by the formula 
\begin{equation}
\cE=(\Theta_{11}-S\Theta_{21})^{-1}(S\Theta_{22}-\Theta_{12}).
\label{6.23}
\end{equation}
Furthermore, the formula \eqref{5.28} for the kernel $\widetilde{K}_S$ can be written in 
terms of 
this $\cE$ as 
\begin{equation}
\widetilde{K}_S(z,\zeta)=u(z)K_\cE(z,\zeta)u(\zeta)^*,
\label{6.24}  
\end{equation}
where 
\begin{equation}
K_\cE(z,\zeta)=\frac{I_\cY-\cE(z)\cE(\zeta)^*}{1-z\overline{\zeta}},\quad
u(z)=\Theta_{11}(z)-S(z)\Theta_{21}(z).
\label{6.25}  
\end{equation}

\begin{theorem}  \label{T:6.8}
Assume that the data set of the problem ${\bf AIP}_{\cH(S)}$
is such that the operator $P=(F^S)^{[*]}F^S$ is strictly positive definite.
Let $\Theta=\sbm{\Theta_{11} & \Theta_{12} \\ \Theta_{21} & \Theta_{22}}$ be a $J$-inner 
function 
satisfying \eqref{5.21a} and let $\cE\in\cS(\cU,\cY)$, $u$ and $F^S$ be given as in 
\eqref{6.23}
\eqref{6.25} and \eqref{5.17}. Then:
\begin{enumerate}
\item All solutions $f$ of the problem ${\bf AIP}_{\cH(S)}$ are parametrized by the 
formula
\begin{equation}
f=F^SP^{-1}{\bf y}^*+uh,
\label{6.26}
\end{equation}
where $h$ is a function from the de Branges-Rovnyak space $\cH(\cE)$ such that
\begin{equation}
\|h\|_{\cH(\cE)}\le \sqrt{1-\|P^{-\frac{1}{2}}{\bf y}^*\|^2}.
\label{6.27}
\end{equation}
\item Representation \eqref{6.26} is orthogonal in the metric of
$\cH(S)$.
\item The multiplication operator $M_u: \, \cH(\cE)\to \cH(S)$ is isometric.
\item  For $f$ defined by \eqref{6.26},
\begin{equation}\label{6.26a}
\|f\|^2_{\cH(S)}=\|P^{-\frac{1}{2}}{\bf y}^*\|^2_{\cX}+\|h\|^2_{\cH(\cE)}.
\end{equation}
\end{enumerate}

\end{theorem}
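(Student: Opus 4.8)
The plan is to derive all four parts from Theorem~\ref{T:6.5} together with two structural facts: an identification of $F^SP^{-1}(F^S)^{[*]}$ as an honest orthogonal projection, and the unitarity of $M_u$ coming from the kernel factorization \eqref{6.24}.

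\emph{Step 1 (the projection).} First I would observe that, because $P=(F^S)^{[*]}F^S$ is strictly positive definite, $F^S$ is bounded below and hence ${\rm Ran}\,F^S$ is a closed subspace of $\cH(S)$. A direct computation using $P=(F^S)^{[*]}F^S$ gives that
$$
\Pi:=F^SP^{-1}(F^S)^{[*]}\colon\cH(S)\to\cH(S)
$$
satisfies $\Pi^2=\Pi=\Pi^{[*]}$ and $\Pi|_{{\rm Ran}\,F^S}={\rm id}$, so $\Pi$ is the orthogonal projection of $\cH(S)$ onto ${\rm Ran}\,F^S$. Consequently $I-\Pi$ is again an orthogonal projection, $(I-\Pi)^{1/2}=I-\Pi$, and the complementation description of $\cH(\widetilde{K}_S)$ (the analogue of \eqref{def2HS} and \eqref{1.14u}, with $I-M_SM_S^*$ replaced by $I-\Pi$, which produces exactly the kernel \eqref{6.16}) shows that $\cH(\widetilde{K}_S)={\rm Ran}(I-\Pi)=\cH(S)\ominus{\rm Ran}\,F^S$, \emph{with the norm inherited from $\cH(S)$}; in particular the inclusion $\cH(\widetilde{K}_S)\hookrightarrow\cH(S)$ is isometric.

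\emph{Step 2 (parts (3) and (2)).} From \eqref{6.24} and \eqref{6.25}, $\widetilde{K}_S(z,\zeta)=u(z)K_\cE(z,\zeta)u(\zeta)^*$, so the Shields--Wallen identity \eqref{ShieldsWallen} applied to kernel functions shows, exactly as in the contractive-multiplier computation of Section~2, that $M_u\colon\cH(\cE)\to\cH(\widetilde{K}_S)$ is a coisometry onto $\cH(\widetilde{K}_S)$. To promote this to a surjective isometry it suffices that $M_u$ be injective, which follows from the pointwise invertibility of $u(z)=\Theta_{11}(z)-S(z)\Theta_{21}(z)$ on $\D$ — already implicit in formula \eqref{6.23}, and a consequence of the $J$-bicontractivity \eqref{5.21b} of $\Theta$ together with the representation \eqref{5.27} of $S$. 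Composing the unitary $M_u\colon\cH(\cE)\to\cH(\widetilde{K}_S)$ with the isometric inclusion of Step~1 yields that $M_u\colon\cH(\cE)\to\cH(S)$ is isometric with range exactly $\cH(S)\ominus{\rm Ran}\,F^S$; this is (3). Since $F^SP^{-1}{\bf y}^*\in{\rm Ran}\,F^S$ while $uh\in{\rm Ran}\,M_u=\cH(S)\ominus{\rm Ran}\,F^S$, the two summands in \eqref{6.26} are orthogonal in $\cH(S)$, which is (2).

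\emph{Step 3 (parts (1) and (4)).} Part~(1) follows by transporting the parametrization of Theorem~\ref{T:6.5} through the unitary $M_u$: every $h'\in\cH(\widetilde{K}_S)$ with $\|h'\|_{\cH(\widetilde{K}_S)}\le\sqrt{1-\|P^{-\frac{1}{2}}{\bf y}^*\|^2}$ is uniquely $h'=uh$ with $h\in\cH(\cE)$ and $\|h\|_{\cH(\cE)}=\|h'\|_{\cH(\widetilde{K}_S)}$, so \eqref{6.18} becomes \eqref{6.26}--\eqref{6.27}, with both implications inherited from Theorem~\ref{T:6.5}. For part~(4), the orthogonal decomposition \eqref{6.26} gives $\|f\|_{\cH(S)}^2=\|F^SP^{-1}{\bf y}^*\|_{\cH(S)}^2+\|uh\|_{\cH(S)}^2$; here $\|F^SP^{-1}{\bf y}^*\|_{\cH(S)}^2=\langle(F^S)^{[*]}F^SP^{-1}{\bf y}^*,P^{-1}{\bf y}^*\rangle_\cX=\langle{\bf y}^*,P^{-1}{\bf y}^*\rangle_\cX=\|P^{-\frac{1}{2}}{\bf y}^*\|_\cX^2$, and $\|uh\|_{\cH(S)}^2=\|h\|_{\cH(\cE)}^2$ by (3), which is \eqref{6.26a}.

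The main obstacle is Step~2: identifying $\Pi$ as a genuine orthogonal projection (so that $\cH(\widetilde{K}_S)$ sits \emph{isometrically}, not merely contractively, inside $\cH(S)$), and the pointwise invertibility of $u$ needed to pass from the coisometry $M_u$ to a unitary. Once these are secured, parts (1), (2) and (4) are essentially bookkeeping on top of Theorems~\ref{T:5.6} and \ref{T:6.5}.
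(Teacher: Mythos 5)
Your proof is correct, and it reaches the theorem by a route that is organized differently from the paper's. The paper's own proof works at the level of positive kernels: it invokes Theorem \ref{T:6.7}, takes the Schur complement of the block $P$ inside the kernel ${\bf K}(z,\zeta)$ of \eqref{6.19} to arrive at the $2\times 2$ kernel inequality \eqref{6.28} with lower-right entry $u(z)K_\cE(z,\zeta)u(\zeta)^*$, and then reads off $f-F^SP^{-1}{\bf y}^*=uh$ with the norm bound from Proposition \ref{bourb}; orthogonality is quoted from Remark \ref{R:6.3} and the isometry of $M_u$ is asserted as a consequence of \eqref{6.24}. You instead take Theorem \ref{T:6.5} as the starting point and supply the two structural facts that the paper leaves implicit: (i) that $\Pi=F^SP^{-1}(F^S)^{[*]}$ is literally the orthogonal projection of $\cH(S)$ onto the closed subspace $\operatorname{Ran}F^S$ (so that $\cH(\widetilde K_S)=\cH(S)\ominus\operatorname{Ran}F^S$ sits \emph{isometrically}, not just contractively, inside $\cH(S)$), and (ii) that the coisometry $M_u\colon\cH(\cE)\to\cH(\widetilde K_S)$ coming from the factorization \eqref{6.24} is promoted to a unitary by injectivity of $M_u$. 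Your version buys a transparent geometric explanation of parts (2)--(4), which in the paper are one-line appeals to Remark \ref{R:6.3} and to \eqref{6.24}; the paper's version buys uniformity with the kernel-positivity formalism of Section 3 and avoids naming the projection. The one point you correctly flag as the load-bearing technicality --- pointwise invertibility of $u=\Theta_{11}-S\Theta_{21}$, needed so that $M_u$ is injective and hence genuinely isometric rather than merely a coisometry onto its range --- is treated no more carefully in the paper, which presupposes it in writing formula \eqref{6.23}; so you are on the same footing there, and your argument stands.
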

\begin{proof} In the case $P$ is strictly positive definite, one can  take its Schur 
complement in 
${\bf K}$ to get, on account of \eqref{6.24}, the equivalent inequality
\begin{equation}
\begin{bmatrix} 1-\|P^{-\frac{1}{2}}{\bf y}^*\|^2 &
f(\zeta)^*-{\bf y}P^{-1}F^S(\zeta)^*\\
f(z)-F^S(z)P^{-1}{\bf y}^* & u(z)K_\cE(z,\zeta)u(\zeta)^*
\end{bmatrix}\succeq 0.
\label{6.28}
\end{equation}
The latter positivity is equivalent to the function $f-F^SP^{-1}{\bf y}^*$
be of the form $uh$ for some $h\in\cH(\cE)$ subject to norm constraint \eqref{6.27}.
Statement (2) follows by Remark \ref{R:6.3} and the isometric property of the operator 
$M_u: \, 
\cH(\cE)\to \cH(S)$ is a consequence of factorization \eqref{6.24}. The last statement 
follows 
from parts (2) and (3) and the fact that $\|F^SP^{-1}{\bf 
y}^*\|^2_{\cH(S)}=\|P^{-\frac{1}{2}}{\bf y}^*\|^2_{\cX}$.
\end{proof}

\subsection{Description based on the Redheffer transform} Theorem \ref{T:6.7} holds true 
even if $P$ is not strictly positive definite. However, in this case one should use the 
Redheffer representation \eqref{5.38} for $S$ rather than \eqref{5.27}. Due to condition 
\eqref{6.14}, there exists a (unique) $\widetilde{\bf y}^*\in\cX\ominus{\rm Ker}P$ such 
that 
${\bf y}^*=P^{\frac{1}{2}}{\bf y}^*$. As in the nondegenerate case,
$S$ is a solution of the Schur-class problem {\bf AIP} with the data set
$\{T,E,N,P=(F^S)^{[*]}F^S\}$ and therefore, it is of the form \eqref{5.38} for some 
(perhaps, not 
uniquely determined) Schur-class function $\cE$. Nevertheless, identities \eqref{5.42} and 
\eqref{5.44} hold for functions $G$ and $\Gamma$ defined via formulas \eqref{5.43}, and 
making use of 
these identities the kernel \eqref{6.19} can be written as 
$$
{\bf K}(z,\zeta)=
\begin{bmatrix} 1 & \widetilde{\bf y}P^{\frac{1}{2}} & f(\zeta)^*\\
P^{\frac{1}{2}}\widetilde{\bf y}^* & P & P^{\frac{1}{2}}\Gamma(\zeta)^{*} \\ f(z) &
\Gamma(z)P^{\frac{1}{2}} & G(z)K_\cE(z,\zeta)G(\zeta)^*+\Gamma(z)\Gamma(\zeta)^*
\end{bmatrix}.
$$
The positivity of the latter kernel is equivalent to positivity of the Schur complement of 
$P$
with respect to ${\bf K}(z,\zeta)$, that is, to the condition
\begin{equation}
\begin{bmatrix}1-\|\widetilde{\bf y}\|^2 & f(\zeta)^*-\widetilde{\bf
y}\Gamma(\zeta)^{*} \\
f(z)-\Gamma(z)\widetilde{\bf y}^* &  G(z)K_\cE(z,\zeta)G(\zeta)^*
\end{bmatrix}\succeq 0\qquad (z,\zeta\in\D).
\label{6.31}
\end{equation}
It follows from the identity \eqref{5.42} that the multiplication operators
$M_G: \, h\mapsto Gh$ and $M_\Gamma: \, x\to \Gamma x$ are contractions from
$\cH(\cE)$ to $\cH(S)$ and from $\cX_0=\overline{\rm Ran}P^{\frac{1}{2}}$, respectively, 
and that
the operator
$$\begin{bmatrix} M_G & M_\Gamma\end{bmatrix}: \; \begin{bmatrix}
\cH(K_\cE) \\ \cX_0\end{bmatrix}\to \cH(K_S)
$$
is coisometric. Furthermore, since in the current case $P=(F^{S})^{[*]}F^S$,
it follows from \eqref{5.42} and \eqref{5.44} that
$M_\Gamma$ is an isometry and $M_G$ a partial isometry.
This leads to the following analog of Theorem \ref{T:6.8}.

\begin{theorem}
\label{T:6.10} 
All solutions $f$ of the problem ${\bf AIP}_{\cH(S)}$ is given by the formula
\begin{equation}\label{6.32}
f(z)=\Gamma(z)\widetilde{\bf y}^*+G(z)h(z)
\end{equation}
with parameter $h$ in $\cH(\cE)$ subject to $\|h\|_{\cH(\cE)}\leq \sqrt{1-\|\widetilde{\bf
y}\|^2}$. Furthermore, for $f$ defined by \eqref{6.32}
\begin{equation}
\|f\|^2_{\cH(S)}=\|\Gamma\widetilde{\bf y}^*\|_{\cH(S)}^2+\|Gh\|_{\cH(S)}^2
=\|\widetilde{\bf y}\|_{\cX}^2+\|P_{\cH(\cE)\ominus{\rm Ker}M_G}h\|^2_{\cH(\cE)}
\label{6.33}
\end{equation}
and hence $f_\textup{min}(z)=\Gamma(z)\widetilde{\bf y}^*$ is the unique minimal-norm 
solution.
\end{theorem}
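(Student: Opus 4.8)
The plan is to leverage Theorem~\ref{T:6.7} together with Proposition~\ref{P:5.10}, in exact parallel with the proof of Theorem~\ref{T:6.8}, but using the Redheffer factorization \eqref{5.42} in place of the nondegenerate factorization \eqref{5.28}. First I would record the reduction carried out in the text preceding the statement: by Theorem~\ref{T:6.7}, $f$ solves ${\bf AIP}_{\cH(S)}$ if and only if the kernel ${\bf K}(z,\zeta)$ of \eqref{6.19} is positive on $\D\times\D$; substituting $S={\mathcal R}_\Sigma[\cE]$ and invoking \eqref{5.42}, \eqref{5.44} rewrites ${\bf K}$ in the block form displayed just above the theorem, with $(1,1)$-corner $P$ factored through $P^{\frac12}$ and $\widetilde{\bf y}$.

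\smallskip

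Next I would take the Schur complement of the (invertible-on-its-range) block $P$ in ${\bf K}(z,\zeta)$. Since ${\bf y}^*=P^{\frac12}\widetilde{\bf y}^*$ and $F^S=\Gamma P^{\frac12}$ (by \eqref{5.44}), the off-diagonal entries in the Schur complement collapse to $f(\zeta)^*-\widetilde{\bf y}\Gamma(\zeta)^*$ and $f(z)-\Gamma(z)\widetilde{\bf y}^*$, while the $(3,3)$-entry $G(z)K_\cE(z,\zeta)G(\zeta)^*+\Gamma(z)\Gamma(\zeta)^*$ loses its $\Gamma(z)\Gamma(\zeta)^*$ summand. This yields precisely the kernel positivity \eqref{6.31}. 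Then positivity of \eqref{6.31} is equivalent, by Proposition~\ref{bourb} applied with kernel $K(z,\zeta)=G(z)K_\cE(z,\zeta)G(\zeta)^*$ (the reproducing kernel of $M_G\cdot\cH(\cE)$, i.e.\ of ${\rm Ran}\,M_G$ with lifted norm), to the statement that $f-\Gamma\widetilde{\bf y}^*$ lies in that range space with norm at most $\sqrt{1-\|\widetilde{\bf y}\|^2}$; equivalently $f-\Gamma\widetilde{\bf y}^*=M_G h=Gh$ for some $h\in\cH(\cE)$ with $\|M_G h\|\le\sqrt{1-\|\widetilde{\bf y}\|^2}$. Using that $M_G$ is a partial isometry, $\|M_G h\|=\|P_{\cH(\cE)\ominus{\rm Ker}\,M_G}h\|$, so after replacing $h$ by its component orthogonal to ${\rm Ker}\,M_G$ (which changes neither $Gh$ nor the admissible norm) we may take $h\in\cH(\cE)$ with $\|h\|_{\cH(\cE)}\le\sqrt{1-\|\widetilde{\bf y}\|^2}$, giving \eqref{6.32}.

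\smallskip

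For the norm formula \eqref{6.33}, I would argue orthogonality of the decomposition $f=\Gamma\widetilde{\bf y}^*+Gh$ in $\cH(S)$: this is the operator-range analogue of Remark~\ref{R:6.3}, since $\Gamma\widetilde{\bf y}^*=F^SP^{-1}{\bf y}^*$ is the minimal-norm solution of the linear equation $(F^S)^{[*]}f={\bf y}^*$ and $Gh$ solves the homogeneous equation $(F^S)^{[*]}(Gh)=0$; the minimal-norm solution is orthogonal in $\cH(S)$ to the homogeneous solution space. Hence $\|f\|^2_{\cH(S)}=\|\Gamma\widetilde{\bf y}^*\|^2_{\cH(S)}+\|Gh\|^2_{\cH(S)}$. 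Then $\|\Gamma\widetilde{\bf y}^*\|^2_{\cH(S)}=\|\widetilde{\bf y}\|^2_\cX$ because $M_\Gamma$ is an isometry, and $\|Gh\|^2_{\cH(S)}=\|M_Gh\|^2=\|P_{\cH(\cE)\ominus{\rm Ker}\,M_G}h\|^2_{\cH(\cE)}$ because $M_G$ is a partial isometry; combining gives \eqref{6.33}. The last assertion is immediate: among all solutions the one with $h=0$ minimizes $\|f\|_{\cH(S)}$, uniquely, since the $\cH(\cE)$-term in \eqref{6.33} vanishes iff $P_{\cH(\cE)\ominus{\rm Ker}\,M_G}h=0$ iff $Gh=0$.

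\smallskip

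The main obstacle I anticipate is the bookkeeping around $M_G$ being only a \emph{partial} isometry rather than an isometry: one must be careful that the parametrization \eqref{6.32} is still ``faithful'' in the sense that every admissible $f$ arises, and that the norm identity \eqref{6.33} is stated with the projection $P_{\cH(\cE)\ominus{\rm Ker}\,M_G}$ rather than with $\|h\|_{\cH(\cE)}$ itself. The clean way to handle this is to establish once and for all that $\begin{bmatrix}M_G & M_\Gamma\end{bmatrix}$ is coisometric (already asserted in the text from \eqref{5.42}) and that $M_\Gamma$ is isometric with range $=$ the minimal-solution line and $M_G\cdot\cH(\cE)$ its orthogonal complement inside the appropriate subspace of $\cH(S)$; the partial-isometry norm identity for $M_G$ then follows formally. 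A secondary point requiring care is the nonuniqueness of $\cE$ in the degenerate case (noted in the text): one must check that the right-hand side of \eqref{6.32}, and the norm \eqref{6.33}, do not depend on which $\cE$ (and hence which $\Sigma$, $G$, $\Gamma$) is chosen — but this is automatic because the \emph{set} of solutions $f$ and their $\cH(S)$-norms are intrinsic to the data set, independent of the auxiliary construction.
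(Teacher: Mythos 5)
Your proposal is correct and follows essentially the same route as the paper: reduction via Theorem \ref{T:6.7} to positivity of the kernel \eqref{6.19}, rewriting that kernel through the Redheffer identities \eqref{5.42} and \eqref{5.44}, taking the Schur complement of $P$ to arrive at \eqref{6.31}, and then extracting the representation $f-\Gamma\widetilde{\bf y}^{*}=Gh$ together with the norm identity from the isometry of $M_\Gamma$ and the partial-isometry property of $M_G$. The extra care you take with the lifted norm on $\operatorname{Ran}M_G$ and with replacing $h$ by its component orthogonal to $\operatorname{Ker}M_G$ is exactly the bookkeeping the paper leaves implicit.
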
   
The latter theorem is not a complete analog of Theorem \ref{T:6.8} since (1) the 
Schur-class function $\cE$ is not determined uniquely and (2) the multiplication operator 
$M_G$ 
is not isometric. To get a closer analog of Theorem \ref{T:6.8}, it 
makes sense to assume that the operator $T$ meets the condition
\begin{equation}
\left(\bigcap_{k\ge 1}{\rm Ran} (T^*)^k\right)\bigcap{\rm Ker}T^*=\{0\}
\label{6.29}
\end{equation}
which is indeed satisfied for the following important particular cases:
\begin{enumerate}
    \item $T^{*}$ is injective (so $\operatorname{Ker} T^{*} =
    \{0\}$),
    \item $T^{*}$ is nilpotent (so $\bigcap_{k\ge 1} \operatorname{Ran} (T^{*})^{k} =
    \{0\}$), and
    \item $\dim \cX < \infty$, or, more generally e.g., $T = \lambda
    I + K$ with $0 \ne \lambda \in {\mathbb C}$ and $K$ compact (so
    $\cX = \operatorname{Ran} (T^{*})^{p} \dot + \operatorname{Ker}
    (T^{*})^{p}$ once $p$ is sufficiently large).
\end{enumerate}

\begin{theorem}\label{T:6.11}
Let $\Sigma$ be the Schur-class
function defined in \eqref{3.7} from the ${\bf AIP}_{\cH(S)}$-admissible data set
\eqref{6.7} and $P=(F^{S})^{[*]}F^S$. If $T$ satisfies condition \eqref{6.29}, then
\begin{enumerate}
\item The Redheffer transform ${\mathcal R}_{\Sigma}: \, 
\cS(\widetilde{\Delta},\widetilde{\Delta}_*)\to\cS(\cU,\cY)$ defined by formula 
\eqref{5.38}
is injective.
\item For any $\cE\in\cS(\widetilde{\Delta},\widetilde{\Delta}_*)$, the multiplication 
operator $M_G: \, f\to Gf$ by the function  $G=\Sigma_{12}(I-\cE\Sigma_{22})^{-1}$
is an isometry from $\cH(\cE)\to\cH(S)$.
\end{enumerate}
\end{theorem}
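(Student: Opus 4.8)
The plan is to deduce both assertions from a single observability property of the coupled colligation that underlies the Redheffer transform \eqref{5.38}. Recall the construction behind \eqref{5.38}: given a parameter $\cE\in\cS(\widetilde{\Delta},\widetilde{\Delta}_*)$, realize $\cE$ by its canonical coisometric functional-model colligation on $\cH(\cE)$ (state operator the backward shift $R_0$ on $\cH(\cE)$, output operator evaluation at the origin, etc., as in part (4) of Theorem \ref{T:H(S)}) and couple it through the internal ports $\widetilde{\Delta},\widetilde{\Delta}_*$ of the unitary colligation $\bU$ from \eqref{5.34}--\eqref{5.35}; since $\bU$ is unitary and the $\cE$-colligation is coisometric, the resulting coupled colligation on the state space $\cX_0\oplus\cH(\cE)$ is coisometric and has characteristic function $S=\cR_\Sigma[\cE]$. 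First I would record that its observability operator is precisely $x\oplus h\mapsto \Gamma(z)x+G(z)h(z)$ with $\Gamma,G$ as in \eqref{5.43} — this is the content of Proposition \ref{P:5.10} and of \eqref{5.42} — so that by standard coisometric-colligation theory this operator is a coisometry of $\cX_0\oplus\cH(\cE)$ onto $\cH(S)$, its kernel equals the unobservable subspace $\bigcap_{k\ge0}\operatorname{Ker}\big(C_SA_S^{\,k}\big)$ of the coupled colligation, and (using admissibility condition (3) together with \eqref{5.44}, as already noted after Theorem \ref{T:6.10}) $M_\Gamma$ is an isometry while $M_G$ is a partial isometry. Consequently $M_G$ is an isometry if and only if the unobservable subspace meets $\{0\}\oplus\cH(\cE)$ only in $0$.

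Granting this, the two statements follow. Statement (2) is exactly this last condition. For (1): once $M_G$ is an isometry for every parameter $\cE$, the coupled colligation is observable (its observability operator is then unitary), hence is determined up to unitary equivalence by its characteristic function $S$; since by \eqref{5.44} the map $M_\Gamma$ is one and the same isometric embedding of $\cX_0$ into $\cH(S)$ for every parameter and its range is the common subspace $\overline{\operatorname{Ran}}\,F^S$, the intertwining unitary fixes $\cX_0\oplus\{0\}$ and restricts to a unitary $\cH(\cE_1)\to\cH(\cE_2)$ intertwining the two $\cE$-sub-colligations, which reverses the coupling and forces $\cE_1=\cE_2$. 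So everything reduces to the following assertion: \textbf{if $T$ satisfies \eqref{6.29} and $h\in\cH(\cE)$ lies in the unobservable subspace of the coupled colligation, then $h=0$.} To attack this I would write $A_S$ and $C_S$ in block form on $\cX_0\oplus\cH(\cE)$: the $\cH(\cE)$-diagonal block of $A_S$ is $R_0$ plus a feedback correction, the $\cX_0$-diagonal block is the $(1,1)$-entry $A$ of $\bU$ plus a feedback correction, and the coupling terms are built from $B_2,D_{12},C_2,D_{21}$ which, by \eqref{5.34}, assemble into two isometric embeddings of the defect spaces $\Delta,\Delta_*$. The crucial input is that the defining identity \eqref{5.32} of the isometry $V$ ties $A$ to $T$ through $P^{\frac{1}{2}}$, so that modulo the dense-range map $P^{\frac{1}{2}}\colon\cX\to\cX_0$ (and a correction coming from the $N$-component of $\cD_V$) the operator $A^*$ mimics $T^*$. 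Propagating the unobservability relations $C_SA_S^{\,k}(0\oplus h)=0$ through the coupling and the two isometric identifications should pin the ``$\cX_0$-shadow'' of $h$ inside $\bigcap_{k\ge1}\operatorname{Ran}(T^*)^k$ while simultaneously annihilating it by $T^*$; by \eqref{6.29} that shadow vanishes, and feeding this back once more through the coisometry forces $h\equiv0$.

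The main obstacle is precisely this last step: converting the hypothesis \eqref{6.29}, phrased purely in terms of $T$, into an observability statement about the coupled colligation, where the operator actually at hand is $A$ together with its feedback corrections and the two embedded defect spaces. The bookkeeping is delicate because $A$ is only a compression of a completion of $V$ and $\cX_0$ is merely $\overline{\operatorname{Ran}}\,P^{\frac{1}{2}}$, so one must track carefully how $P^{\frac{1}{2}}$ intertwines $A$ with $T$ and how $\Delta,\Delta_*$ interact with $\operatorname{Ker}T^*$ and the ranges $\operatorname{Ran}(T^*)^k$. As a warm-up and consistency check I would first settle the three special cases listed after \eqref{6.29} — $T^*$ injective, $T^*$ nilpotent, and $\dim\cX<\infty$ (or $T=\lambda I+K$ with $0\neq\lambda\in\C$, $K$ compact) — in which the argument collapses to ``$\operatorname{Ker}T^*=\{0\}$'', or ``$\bigcap_k\operatorname{Ran}(T^*)^k=\{0\}$'', or a Fitting-type decomposition, before tackling the general case.
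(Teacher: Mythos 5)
Your proposal sets up what is essentially the right framework: realizing $S=\cR_\Sigma[\cE]$ by coupling the canonical coisometric colligation of $\cE$ to the unitary colligation $\bU$ of \eqref{5.34}--\eqref{5.35}, identifying the observability operator of the coupled colligation with $x\oplus h\mapsto \Gamma x+Gh$ via \eqref{5.42}, and reducing both assertions to the statement that the unobservable subspace meets $\{0\}\oplus\cH(\cE)$ trivially. (The survey itself states the theorem without proof, deferring to \cite{bbth2}, and that reference does argue through coupling and observability, so your orientation is sound.) The reductions you sketch are mostly salvageable: the coisometry of $\bbm{M_G & M_\Gamma}$ together with the isometricity of each block does force orthogonality of their ranges, and \eqref{5.44} does show that $M_\Gamma$ (though $\Gamma$ itself depends on $\cE$) is the same map on $\cX_0=\overline{\operatorname{Ran}}P^{\frac12}$ for any two parameters producing the same $S$, so that $M_{G_2}^{[*]}M_{G_1}$ becomes a candidate intertwining unitary. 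Even here, though, you still owe the verification that this unitary respects the $\widetilde\Delta,\widetilde\Delta_*$ port identifications, without which ``unitarily equivalent sub-colligations'' does not yet yield $\cE_1=\cE_2$.

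The genuine gap is the one you yourself flag: the entire analytic content of the theorem is the implication ``\eqref{6.29} $\Rightarrow$ the unobservable subspace avoids $\{0\}\oplus\cH(\cE)$,'' and your proposal replaces this with ``propagating the unobservability relations \ldots should pin the $\cX_0$-shadow of $h$ inside $\bigcap_{k\ge1}\operatorname{Ran}(T^*)^k$ while simultaneously annihilating it by $T^*$.'' No computation is offered that actually produces such a shadow or the two claimed properties. The difficulty is real: the relation $AP^{\frac12}=P^{\frac12}T-B_1N$ coming from \eqref{5.32} ties $A$ to $T$ only modulo the feedback term $B_1N$ and only on $\operatorname{Ran}P^{\frac12}$, the adjoint relation $P^{\frac12}A^*=T^*P^{\frac12}-N^*B_1^*$ mixes in the defect spaces at every iterate, and the unobservable vectors of the coupled colligation live in $\cX_0\oplus\cH(\cE)$, not in $\cX$. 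Until you exhibit the explicit map from $\cH(\cE)$-components of unobservable vectors into $\bigl(\bigcap_{k\ge1}\operatorname{Ran}(T^*)^k\bigr)\cap\operatorname{Ker}T^*$ and check that it is injective on the relevant subspace, the proof is a plan rather than an argument; everything else in the proposal is downstream of this missing step.
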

Thus, if the operator $T$ in the ${\bf AIP}_{\cH(S)}$-admissible data set
\eqref{6.7} satisfies condition \eqref{6.29} then the Schur-class function $\cE$ in 
Theorem 
\ref{T:6.10} is determined uniquely from  the data set and $\Sigma$, and the formula
\eqref{6.33} takes a simpler form 
$$
\|f\|^2_{\cH(S)}=\|\widetilde{\bf y}\|_{\cX}^2+\|h\|^2_{\cH(\cE)}.
$$

\section{Boundary behavior and boundary interpolation in $\cH(S)$}
In this section, relations between boundary regularity of a Schur-class function $S$ 
and boundary regularity of functions in the associated   
de Branges-Rovnyak space $\cH(S)$ will be discussed. The next theorem presents a result of 
this type. 
\begin{theorem}
Let $I$ be an open arc of $\mathbb T$, let $S$ be a scalar-valued Schur-class function and 
let 
$A_S=R_0\vert_{\cH(S)}$ be the model operator for $S$ (see \eqref{realization}). The 
following 
are equivalent:
\begin{enumerate}
\item $S$ admits an analytic continuation across $I$ and $|S(\zeta)|=1$ for all $\zeta\in 
I$.
\item $I$ is contained in the resolvent set of $A_S^*$.
\item Any function $f\in\cH(S)$ admits an analytic continuation across $I$.
\end{enumerate}
\label{T:7.1}
\end{theorem}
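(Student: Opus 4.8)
The plan is to prove the circle of equivalences $(1)\Leftrightarrow(2)\Leftrightarrow(3)$ by exploiting the coisometric realization \eqref{realization} of $S$ in terms of the model colligation $\bU_S$ together with the reproducing-kernel structure of $\cH(S)$. The key fact I would use repeatedly is that, in the scalar case, $R_0^*$ on $\cH(S)$ is given by \eqref{RS*}, so that $A_S^* = R_0^* = M_z - S\cdot \tau^*$; equivalently, $(I-\bar\zeta A_S^*)$ acting on kernel functions is governed by $K_S(\cdot,\zeta)$. The natural object tying everything together is the identity
$$
(I - \zeta A_S)^{-1} C_S^* = (I - \zeta A_S)^{-1}\be(0)^* = K_S(\cdot,\bar\zeta)\quad\text{(suitably interpreted)},
$$
which shows that membership of $\zeta$ in the resolvent set of $A_S$ (or $A_S^*$) is equivalent to an analytic-continuation statement for the map $\zeta \mapsto K_S(\cdot,\zeta)$, hence for the reproducing kernel, hence for $S$ itself via \eqref{deBRkernel}.

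For $(2)\Rightarrow(1)$: if $I$ lies in the resolvent set of $A_S^*$, then $(I - \bar\zeta A_S^*)^{-1}$ is analytic on a neighborhood of $I$, and applying it to suitable vectors and using the realization formula $S(z) = S(0) + zC_S(I-zA_S)^{-1}B_S$ gives an analytic continuation of $S$ across $I$. To get $|S(\zeta)|=1$ on $I$, I would use that $\bU_S$ is coisometric, so $A_S A_S^* + B_S B_S^* = I$ and $C_S A_S^* + D_S B_S^* = 0$ and $C_S C_S^* + |D_S|^2 = 1$; a direct computation then yields $1 - |S(\zeta)|^2 = (1-|\zeta|^2)\|(I-\bar\zeta A_S^*)^{-1}C_S^*\|^2$ on $\D$, an identity which extends by analytic continuation across $I$ and forces $|S(\zeta)| = 1$ there since $1-|\zeta|^2 = 0$ on $I$ while the right-hand factor stays finite.

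For $(1)\Rightarrow(3)$: given the analytic continuation of $S$ with unimodular boundary values on $I$, I would show that $\cH(S)$ is locally the image of an analytic-continuation operator. The cleanest route is via the kernel: $(1)$ implies $K_S(z,\zeta) = \dfrac{1 - S(z)\overline{S(\zeta)}}{1 - z\bar\zeta}$ extends to be analytic in $z$ and anti-analytic in $\zeta$ for $z,\zeta$ near $I$ (the apparent singularity at $z\bar\zeta = 1$ cancels because the numerator vanishes there, using $S(\zeta)\overline{S(\zeta)}=1$). Then $f \in \cH(S)$ satisfies $f(z) = \langle f, K_S(\cdot,z)\rangle_{\cH(S)}$, and since $z \mapsto K_S(\cdot,z)$ is (anti)analytic into $\cH(S)$ on a neighborhood of $I$, $f$ inherits an analytic continuation across $I$. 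For $(3)\Rightarrow(2)$: if every $f \in \cH(S)$ continues analytically across $I$, I would argue that $R_0$ maps the continued functions into themselves near $I$ and that $\zeta - A_S^* = \zeta - R_0^*$ is boundedly invertible for $\zeta \in I$ by constructing the inverse explicitly from \eqref{RS*}, using that multiplication by $z$ and the rank-one correction by $S\cdot\tau^*$ behave well against the continuation; closed-graph considerations upgrade pointwise invertibility to the resolvent-set statement.

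The main obstacle I expect is the analytic/functional-analytic bookkeeping in the implication $(3)\Rightarrow(2)$: passing from "each individual $f$ continues" to "the operator-valued resolvent $(\zeta - A_S^*)^{-1}$ exists and is bounded near $I$" requires a uniformity (a uniform radius of continuation and a uniform bound on a neighborhood of $I$), which is not automatic from pointwise continuation. I would handle this by a Baire-category / uniform-boundedness argument on the family of continuation operators $f \mapsto f(\zeta)$ for $\zeta$ ranging over a slightly enlarged arc, combined with the fact that $\cH(S)$ is a reproducing kernel Hilbert space so these evaluation functionals are bounded; alternatively one can invoke the characterization of $A_S^*$ having $I$ in its resolvent set directly in terms of the behavior of $K_S(\cdot,\zeta)$ as $\zeta \to I$, which is the route most robust against these uniformity issues and the one I would ultimately write up.
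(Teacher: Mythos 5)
The paper itself does not prove Theorem \ref{T:7.1}: it refers the reader to Fricain--Mashreghi, Helson and Sarason, so your sketch has to be measured against those standard arguments rather than against an in-text proof. Your implication $(2)\Rightarrow(1)$ is essentially correct and is the standard realization-theoretic one: coisometry of $\bU_{S}$ in \eqref{bUS} gives $K_{S}(z,w)=C_{S}(I-zA_{S})^{-1}(I-\overline{w}A_{S}^{*})^{-1}C_{S}^{*}$, hence $1-|S(z)|^{2}=(1-|z|^{2})\,\|(I-\bar{z}A_{S}^{*})^{-1}C_{S}^{*}\|^{2}$, and both $S$ (via \eqref{realization}) and this identity continue across $I$ once $I\subset\rho(A_{S}^{*})$, because for $z$ near $I$ the point $1/\bar{z}$ also lies near $I$.

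The other two legs of your cycle have genuine gaps, and between them the argument does not close. In $(1)\Rightarrow(3)$ you establish only that the \emph{scalar} kernel $K_{S}(z,\zeta)$ extends sesqui-analytically near $I\times I$; this does not show that for $\zeta$ outside $\overline{\D}$ the function $z\mapsto K_{S}(z,\zeta)$ belongs to $\cH(S)$, nor that $\zeta\mapsto K_{S}(\cdot,\zeta)$ is a norm-(anti)analytic $\cH(S)$-valued map there, and without that the reproducing formula $f(\zeta)=\langle f,K_{S}(\cdot,\zeta)\rangle_{\cH(S)}$ simply has no meaning for $|\zeta|>1$ and produces no continuation of $f$. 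The natural way to get the vector-valued extension is $K_{S}(\cdot,\zeta)=(I-\bar{\zeta}A_{S}^{*})^{-1}C_{S}^{*}$ --- but that presupposes $(2)$, so your route collapses into the missing implication $(1)\Rightarrow(2)$, which is exactly the hard content of the theorem (Moeller's theorem and its non-inner generalization; for inner $S$ one classically glues $f$ to the pseudocontinuation $S(z)\overline{g(1/\bar z)}/z$ by a Morera-type argument across $I$). In $(3)\Rightarrow(2)$ the closed-graph/uniform-boundedness part you worry about is actually the easy part; the real issue is that the candidate resolvent, obtained by unwinding $(I-\zeta_{0}A_{S})f=g$ to $f(z)=\bigl(zg(z)-\zeta_{0}g(\zeta_{0})\bigr)/(z-\zeta_{0})$, must be shown to land back in $\cH(S)$, and this boundary difference-quotient membership is nowhere addressed. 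I would also point out that $(3)\Rightarrow(1)$ has a clean direct proof you could substitute: $R_{0}(S\cdot 1)=(S(z)-S(0))/z$ lies in $\cH(S)$ by Theorem \ref{T:H(S)}(3), so $(3)$ forces $S$ to continue across $I$, and the uniform boundedness principle applied to the evaluations $f\mapsto f(\zeta)$ on compact subsets of $\D\cup I$ bounds $K_{S}(\zeta,\zeta)=(1-|S(\zeta)|^{2})/(1-|\zeta|^{2})$ and forces $|S|=1$ on $I$; but even with that repair you are still left needing one sound implication out of statement $(1)$.
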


For the proof the reader is referred to \cite{frma} and to earlier sources
\cite[Chapter VIII]{helson} (for the case where $S$ is inner) and 
\cite[Chapter 5]{sarasonbook} (for the case where $S$ is subject to 
the condition $\int_{\mathbb T} \log 
(1-|S(\zeta)|)d\zeta=-\infty$). 

\smallskip

The single-point (local) version of Theorem \ref{T:7.1} is the following: 
\begin{theorem}
Let $S$ be a scalar-valued Schur-class function and let $t_0\in\mathbb T$.
The following are equivalent:
\begin{enumerate}
\item $S$ admits an analytic continuation into a neighborhood $\cU$ of $t_0$
and is unimodular on $\cU\cap \mathbb T$.
\item The operator $t_0 I-A_S^*$ is invertible on $\cH(S)$.
\item Any function $f\in\cH(S)$ admits an analytic continuation into $\cU$.
\end{enumerate}
\label{T:7.2}  
\end{theorem}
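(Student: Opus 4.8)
The plan is to reduce to the arc version, Theorem~\ref{T:7.1}, by localization, isolating along the way the two implications that are genuinely substantive. Two preliminary remarks make the reduction possible. First, for functions holomorphic on $\D$ the two notions of local regularity used in the two theorems coincide: if such a function continues analytically into an open set $W$ containing an open arc $I\ni t_0$, then $W$ contains a disc $\cU$ about $t_0$, and gluing along the connected set $\D\cap\cU$ extends the function holomorphically to $\D\cup\cU$; conversely, a function continuing to a disc $\cU$ about $t_0$ continues across the open arc $\cU\cap\mathbb T$. Second, $\rho(A_S^*)$ is open in $\C$, so $\rho(A_S^*)\cap\mathbb T$ is a union of open arcs, and $t_0\in\rho(A_S^*)$ precisely when some open arc through $t_0$ is contained in $\rho(A_S^*)$.

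From these remarks the implications $(1)\Rightarrow(2)$ and $(3)\Rightarrow(2)$ follow at once from Theorem~\ref{T:7.1}: in the first case one takes the arc $I=\cU\cap\mathbb T$ on which $S$ is unimodular and applies $(1)\Rightarrow(2)$ of Theorem~\ref{T:7.1} to get $I\subseteq\rho(A_S^*)$, hence $t_0\in\rho(A_S^*)$; in the second case one takes $I=\cU\cap\mathbb T$ and applies $(3)\Rightarrow(2)$ of Theorem~\ref{T:7.1} in the same way. For the reverse implications Theorem~\ref{T:7.1} is not needed. Using $\|A_S\|\le1$ and $\sigma(A_S^*)=\overline{\sigma(A_S)}$, condition (2) says exactly that $\bar t_0\notin\sigma(A_S)=\sigma(R_0|_{\cH(S)})$, so there is a disc $\cU$ about $t_0$ on which $(I-zR_0)^{-1}$ is holomorphic. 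On the one hand, for every $f\in\cH(S)$ the identity $f(z)=\be(0)(I-zR_0)^{-1}f$, valid on $\D$, then extends $f$ holomorphically to $\D\cup\cU$ with $\cU$ independent of $f$, which is (3). On the other hand, by Theorem~\ref{T:H(S)}(4) (the coisometric realization $\bU_S=\sbm{R_0 & \tau\\ \be(0) & S(0)}$) the transfer function $S(z)=S(0)+z\,\be(0)(I-zR_0)^{-1}\tau$ extends to $\D\cup\cU$, and the coisometric kernel identity
\begin{equation*}
I-S(z)S(w)^*=(1-z\bar w)\,\be(0)(I-zR_0)^{-1}(I-\bar wR_0^*)^{-1}\be(0)^*,
\end{equation*}
whose right-hand side is holomorphic in $z$ and in $\bar w$ on $\cU\times\cU$, continues there; evaluating at $w=z=\zeta\in\cU\cap\mathbb T$ gives $I-S(\zeta)S(\zeta)^*=0$, i.e.\ $|S(\zeta)|=1$, which is (1). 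Combining, $(1)\Leftrightarrow(2)\Leftrightarrow(3)$.

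The step I expect to carry all the weight is the pair of implications $(1)\Rightarrow(2)$ and $(3)\Rightarrow(2)$, that is, the passage from the mere existence of an analytic continuation to invertibility of $t_0I-A_S^*$; here the present proposal simply inherits the content of Theorem~\ref{T:7.1}, whose proof (in the references cited there) does the real analytic work — controlling the de Branges--Rovnyak kernel $K_S$ near $t_0$, not just $S$ itself. If one instead wanted a proof of the present theorem that did not quote Theorem~\ref{T:7.1}, this is precisely the hard part, and the natural route would be to show directly that the formal inverse $g(z)=\bigl(f(z)-S(z)\tau^*g\bigr)/(t_0-z)$ of $t_0I-R_0^*$ maps $\cH(S)$ boundedly into itself under hypothesis (1) or (3).
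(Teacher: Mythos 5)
Your argument is correct. Note first that the paper itself offers no proof of this theorem: it is stated merely as the single-point version of Theorem~\ref{T:7.1}, whose proof is in turn deferred to Fricain--Mashreghi and the earlier sources of Helson and Sarason. Your reduction makes that implicit claim precise, and it holds up: the openness of the resolvent set converts membership of $t_0$ in $\rho(A_S^*)$ into containment of an open arc, and your gluing remark reconciles ``continuation across an arc'' with ``continuation into a neighborhood,'' so $(1)\Rightarrow(2)$ and $(3)\Rightarrow(2)$ do follow from Theorem~\ref{T:7.1} exactly as you say. What you add beyond the paper is a self-contained treatment of the converse directions from the coisometric realization of Theorem~\ref{T:H(S)}(4): the identity $f(z)=\be(0)(I-zR_0)^{-1}f$ together with the contractivity of $R_0|_{\cH(S)}$ (from \eqref{norm-est}) gives $(2)\Rightarrow(3)$ with a neighborhood independent of $f$, and the coisometric kernel identity, extended by the identity theorem in the variables $(z,\bar w)$ to $\cU\times\cU$ and evaluated on the diagonal over $\cU\cap\mathbb T$, yields unimodularity and hence $(2)\Rightarrow(1)$; both computations are consistent with \eqref{realization} and with the analogous formula \eqref{5.37} used elsewhere in the paper. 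Two small points worth recording: statement (3) must be read, as you do, with a neighborhood $\cU$ common to all $f\in\cH(S)$ (otherwise the reduction to Theorem~\ref{T:7.1}(3) would require an extra uniformity argument, e.g.\ via Baire category or the closed graph theorem); and your closing remark correctly locates the genuinely analytic content in the implications imported from Theorem~\ref{T:7.1}, which is where the hard work of controlling $K_S$ near $t_0$ actually resides.
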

Our next aim is to find conditions (in terms of $S$) guaranteeing a weaker but more 
natural 
property: the existence of the nontangential boundary limit 
\begin{equation}
f(t_0):=\angle \lim_{z\to t_0}f(z) 
\label{7.1}
\end{equation}
(i.e., $t_0$ is approached from within an arbitrary but  fixed Stolz angle with the 
vertex at $t_0$) for any function $f\in\cH(S)$. Upon comparing statements
(1) and (3) in Theorems \ref{T:7.1} and \ref{T:7.2}, one may think that 
the desired condition might be that $S$ admits a unimodular boundary limit 
$S(t_0)$. This condition is indeed necessary but not sufficient as 
the next theorem shows.
    \begin{theorem}\label{T:7.3}
    Let $S\in\cS$ and $t_0\in\mathbb T$. The following are
    equivalent:
    \begin{enumerate}
\item The boundary limit \eqref{7.1} exists for every $f\in\cH(S)$.
\item $S$ meets the Carath\'eodory-Julia condition
\begin{equation}
\liminf_{z\to t_0}\frac{1-|S(z)|^2}{1-|z|^2}<\infty
\label{7.2}
\end{equation}
where $z$ tends to $t_0$ unrestrictedly in $\D$.
\item The boundary limits 
\begin{equation}
S_0=\angle \lim_{z\to t_0}S(z)\quad\mbox{and}\quad 
S_1=\angle \lim_{z\to t_0}S^\prime (z)
\label{7.3}
\end{equation}
exist and are subject to conditions 
\begin{equation}
|S_0|=1\quad\mbox{and}\quad t_0S_1\overline{S}_0\in{\mathbb R}.
\label{7.4}   
\end{equation}
\item The boundary limit $S_0$ in \eqref{7.3} exists
and the function
$K_{t_0}(z):={\displaystyle\frac{1-S(z)\overline{S}_0}{1-z\overline{t}_0}}$
    belong to $\cH(S)$.
    \end{enumerate}
    Moreover, if the conditions (1)--(4) are satisfied then 
$$
\liminf_{z\to t_0}\frac{1-|S(z)|^2}{1-|z|^2}=\angle\lim_{z\to 
t_0}\frac{1-|S(z)|^2}{1-|z|^2}=
t_0S_1\overline{S}_0=\|K_{t_0}\|^2_{\cH(S)}\ge 0
$$
and the function $K_{t_0}$  is the boundary reproducing kernel
    in $\cH(S)$ in the sense that
    \begin{equation}
    \langle f, \, K_{t_0}\rangle_{\cH(S)}=f(t_0):=\angle\lim_{z\to
    t_0}f(z).  
    \label{7.5}
    \end{equation}
    \end{theorem}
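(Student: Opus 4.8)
The plan is to run the cycle $(1)\Rightarrow(2)\Rightarrow(3)\Rightarrow(1)$ and then adjoin $(3)\Rightarrow(4)\Rightarrow(2)$, reading off the ``moreover'' identities along the way. Throughout I write $k^S_w:=K_S(\cdot,w)\in\cH(S)$, so that $\langle f,k^S_w\rangle_{\cH(S)}=f(w)$, $\|k^S_w\|^2_{\cH(S)}=K_S(w,w)=\frac{1-|S(w)|^2}{1-|w|^2}$, and $\operatorname{span}\{k^S_w:w\in\D\}$ is dense in $\cH(S)$; I also use the elementary identity $K_{t_0}(z)=t_0\overline S_0\,\frac{S_0-S(z)}{t_0-z}$, valid once $|S_0|=1=|t_0|$. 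For $(1)\Rightarrow(2)$: fix a Stolz angle $\Gamma$ at $t_0$. Each $f\in\cH(S)$ is continuous on $\D$ and, by $(1)$, has a limit along $\Gamma$, hence is bounded on $\Gamma$; thus $\sup_{w\in\Gamma}|\langle f,k^S_w\rangle_{\cH(S)}|<\infty$ for every $f$, and the uniform boundedness principle gives $\sup_{w\in\Gamma}\|k^S_w\|^2_{\cH(S)}=\sup_{w\in\Gamma}\frac{1-|S(w)|^2}{1-|w|^2}<\infty$, which is more than the (unrestricted) finiteness in \eqref{7.2}.

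For $(2)\Rightarrow(3)$: this is precisely the Julia--Wolff--Carath\'eodory theorem, and I would recall its proof rather than redo it — \emph{this is the main obstacle.} From $\liminf=d<\infty$ choose $z_n\to t_0$ with $\frac{1-|S(z_n)|^2}{1-|z_n|^2}\to d$; the Schwarz--Pick inequality together with the finiteness of $d$ forces any subsequential limit $S_0$ of $(S(z_n))$ to be unimodular and yields Julia's inequality $\frac{|S_0-S(z)|^2}{1-|S(z)|^2}\le d\,\frac{|t_0-z|^2}{1-|z|^2}$ for all $z\in\D$, whence $S(z)\to S_0$ as $z\to t_0$ and $S_0$ is unique. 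The sharper Carath\'eodory statements — existence of the limits \eqref{7.3} and of $\angle\lim_{z\to t_0}\frac{S_0-S(z)}{t_0-z}=S_1$, the nontangential limit $\angle\lim_{z\to t_0}\frac{1-|S(z)|^2}{1-|z|^2}=d$, and $d=t_0S_1\overline S_0\ge0$ (so \eqref{7.4} holds) — are extracted from the Herglotz representation of the Carath\'eodory function $\frac{1+\overline S_0S}{1-\overline S_0S}$, whose representing measure carries an atom at $t_0$ of mass controlled by $d$; here I would cite \cite{sarasonbook, frma} and the classical literature.

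For $(3)\Rightarrow(1)$, $(4)$, and \eqref{7.5}: assume $(3)$ and let $w\to t_0$ inside a fixed Stolz angle. By $(3)$, $\|k^S_w\|^2_{\cH(S)}=\frac{1-|S(w)|^2}{1-|w|^2}\to d$ stays bounded, while for each $v\in\D$ one has $\langle k^S_w,k^S_v\rangle_{\cH(S)}=K_S(v,w)\to K_{t_0}(v)$ because $S(w)\to S_0$. Hence the net $(k^S_w)$ is bounded in $\cH(S)$ and every weak cluster point $g$ satisfies $g(v)=\langle g,k^S_v\rangle_{\cH(S)}=\lim_{w\to t_0}K_S(v,w)=K_{t_0}(v)$ for all $v$, so $g=K_{t_0}$; thus $K_{t_0}\in\cH(S)$, which is $(4)$, and $k^S_w\to K_{t_0}$ weakly with $\|K_{t_0}\|^2_{\cH(S)}\le\liminf\|k^S_w\|^2_{\cH(S)}=d$. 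Weak convergence then gives $f(w)=\langle f,k^S_w\rangle_{\cH(S)}\to\langle f,K_{t_0}\rangle_{\cH(S)}$ for every $f\in\cH(S)$; since the Stolz angle was arbitrary, this is $(1)$ together with $\langle f,K_{t_0}\rangle_{\cH(S)}=f(t_0)$, i.e.\ \eqref{7.5}. Taking $f=K_{t_0}$ in \eqref{7.5} and using the displayed identity for $K_{t_0}$ and the Carath\'eodory difference-quotient limit yields $\|K_{t_0}\|^2_{\cH(S)}=K_{t_0}(t_0)=t_0\overline S_0S_1=d$, so all the equalities in the ``moreover'' clause are in force.

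For $(4)\Rightarrow(2)$ (which closes the scheme): suppose $S_0=\angle\lim S$ exists and $K_{t_0}\in\cH(S)$. By Cauchy--Schwarz in $\cH(S)$ — equivalently the diagonal of the positive kernel furnished by Proposition \ref{bourb} with $\gamma=\|K_{t_0}\|_{\cH(S)}$ — one gets $|K_{t_0}(w)|^2\le\|K_{t_0}\|^2_{\cH(S)}\,\frac{1-|S(w)|^2}{1-|w|^2}$ for all $w\in\D$. Letting $w=rt_0\to t_0$ and comparing growth rates of the two sides first forces $|S_0|=1$, after which the same inequality becomes $\frac{|S_0-S(w)|^2}{1-|S(w)|^2}\le\|K_{t_0}\|^2_{\cH(S)}\frac{|t_0-w|^2}{1-|w|^2}$; since $|S_0-S(w)|\ge 1-|S(w)|$, evaluating along $w=rt_0$ gives $1-|S(rt_0)|\le 2\|K_{t_0}\|^2_{\cH(S)}\frac{1-r}{1+r}$, hence $\liminf_{r\to1}\frac{1-|S(rt_0)|}{1-r}\le\|K_{t_0}\|^2_{\cH(S)}<\infty$, which is equivalent to \eqref{7.2}. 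This establishes $(1)\Leftrightarrow(2)\Leftrightarrow(3)\Leftrightarrow(4)$, and the two bounds $\|K_{t_0}\|^2_{\cH(S)}\le d$ and $d\le\|K_{t_0}\|^2_{\cH(S)}$ obtained above are consistent with the identity $\|K_{t_0}\|^2_{\cH(S)}=d$ already derived; the only genuinely non-soft ingredient is the classical Carath\'eodory refinement used in $(2)\Rightarrow(3)$.
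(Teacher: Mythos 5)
Your proof is correct. Note that the paper itself does not actually prove Theorem \ref{T:7.3}: it only cites Shapiro for the equivalence $(2)\Leftrightarrow(3)$ (the classical Carath\'eodory--Julia theorem) and Chapter 6 of Sarason's book for $(1)\Leftrightarrow(2)\Leftrightarrow(4)$. Your argument is essentially the standard reproducing-kernel proof that those references contain --- uniform boundedness of the evaluation functionals on a Stolz angle for $(1)\Rightarrow(2)$, weak compactness of the bounded net $K_S(\cdot,w)$ together with convergence against the dense span of kernels for $(3)\Rightarrow(1),(4)$ and for identifying $K_{t_0}$ as the boundary reproducing kernel, and the Cauchy--Schwarz estimate $|K_{t_0}(w)|^2\le\|K_{t_0}\|^2_{\cH(S)}K_S(w,w)$ for $(4)\Rightarrow(2)$ --- with the Julia--Wolff--Carath\'eodory theorem imported for $(2)\Rightarrow(3)$ exactly as the paper does. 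The cycle you run does close (every statement is reachable from every other), and the ``moreover'' identities fall out of $\|K_{t_0}\|^2_{\cH(S)}=K_{t_0}(t_0)=t_0\overline{S}_0S_1$ as you indicate.
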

Equivalence $(2) \Leftrightarrow (3)$ is the classical Carath\'eodory-Julia theorem
on angular derivatives (see e.g., \cite[Chapter 4]{shapiro}). Note that condition 
\eqref{7.2} is equivalent to the requirement that the function 
$z\mapsto K_S(z,\zeta)$ stay bounded in the norm of $\cH(S)$ as $\zeta$ tends to $t_0$ 
unrestrictedly in $\D$.

\smallskip

The equivalences $(1) \Leftrightarrow(2) \Leftrightarrow(4)$ are proved in \cite[Chapter 
6]{sarasonbook}. Statement (4) is presented in \cite{sarasonbook} in a seemingly weaker 
form 
\begin{enumerate}
\item[($4^\prime$)] {\em There is a number $\lambda\in\C$ such that the function 
$\frac{1-S(z)\overline\lambda}{1-z\overline{t}_0}$ belongs to $\cH(S)$}. 
\end{enumerate}
It then can be shown that there is a unique such $\lambda$ which  turns out to be equal
to the boundary limit $S_0$ as in \eqref{7.3}.

\smallskip

The list of equivalent conditions in Theorem \ref{T:7.3} can be extended by several other 
ones.
The extended list of such equivalences will be presented in the context of a  a more 
general question:
given a Schur-class function $S$, given a point $t_0\in\mathbb T$ and  and given an 
integer $n\ge 
0$, find conditions necessary  and sufficient for the existence of boundary limits 
 \begin{equation} 
f_j(t_0):=\angle\lim_{z\to t_0}\frac{f^{(j)}(z)}{j!} \quad\mbox{for}\quad j=0,\ldots,n
   \label{7.6} 
    \end{equation}
and for any function $f\in\cH(S)$.

\begin{theorem}\label{T:7.4}
Let $s\in\cS$, $t_0\in\mathbb T$ and $n\in{\mathbb N}$. The following are
    equivalent:
    \begin{enumerate}
\item[(1)] The boundary limits \eqref{7.6} exist for every $f\in\cH(S)$.
\item[(1$^\prime$)] The boundary limit ${\displaystyle \angle\lim_{z\to t_0}f^{(jn}(z)}$ 
exists
for every $f\in\cH(S)$.
\item[(2)] $S$ meets the generalized Carath\'eodory-Julia condition
 \begin{equation}
    \liminf_{z\to t_0}\frac{\partial^{2n}}{\partial
    z^{n}\partial\bar{z}^{n}} \, \frac{1-|S(z)|^2}{1-|z|^2}<\infty.
    \label{7.7}
    \end{equation}
Equivalently, the function $\frac{\partial^n}{\partial\bar{\zeta}^n}K_S(\cdot,\zeta)$ 
stays
bounded in the norm of $\cH(S)$ as $\zeta$ tends radially to $t_0$.
 
\item[(3)] The boundary limits $S_j:=S_j(t_0)$ exist for $j=0,\ldots,2n+1$
and are such that $|S_0|=1$ and the matrix
    \begin{equation}
    {\mathbb P}^S_{n}(t_0):=\left[\begin{array}{ccc} S_1 & \cdots &
    S_{n+1} \\ \vdots & &\vdots \\
    S_{n+1} & \cdots & S_{2n+1}\end{array}\right]{\bf \Psi}_n(t_0)   
    \left[\begin{array}{ccc}\overline{S}_0 & \ldots &
    \overline{S}_n\\ & \ddots & \vdots \\ 0 &&\overline{S}_0
    \end{array}\right]
    \label{7.8}
    \end{equation}
    is Hermitian, where the first factor is a Hankel matrix, the
    third factor is an upper triangular Toeplitz matrix and where
    ${\bf \Psi}_n(t_0)$ is the upper triangular matrix given by
    \begin{equation}
    {\bf\Psi}_n(t_0)=\left[\Psi_{j\ell}\right]_{j,\ell=0}^n,\quad
    \Psi_{j\ell}=(-1)^{\ell}\left(\begin{array}{c} \ell \\ j
    \end{array}\right)t_0^{\ell+j+1},\quad 0\le j\leq\ell\le n.
    \label{7.9}
    \end{equation}
\item[(4)] The boundary limits $S_j:=S_j(t_0)$ exist for $j=0,\ldots,n$ and the
    functions
    \begin{equation}
    K_{t_0,j}(z):=\frac{z^j}{(1-z\overline{t}_0)^{j+1}}-S(z)\cdot
\sum_{\ell=0}^j\frac{z^{j-\ell}\overline{S}_\ell}{(1-z\overline{t}_0)^{j+1-\ell}}
    \label{7.10}
    \end{equation}  
    belong to $\cH(K_S)$ for $j=0,\ldots,n$. Equivalently, the limits $S_j:=S_j(t_0)$ 
exist for 
$j=0,\ldots,n$ and the single function $K_{t_0,n}$ belongs to $\cH(K_S)$.

\item[(4$^\prime$)] There exist complex numbers $\lambda_0,\ldots,\lambda_n$ such that 
the function
$$
\frac{z^j}{(1-z\overline{t}_0)^{j+1}}-S(z)\cdot
\sum_{\ell=0}^n\frac{z^{n-\ell}\overline{\lambda}_\ell}{(1-z\overline{t}_0)^{n+1-\ell}}
$$
 belongs to $\cH(K_s)$.

\item[(5)]  It holds that
    \begin{equation} \label{7.11}
     {\displaystyle\sum_k\frac{1-|a_k|^2}{|t_0-a_k|^{2n+2}}+
    \int_0^{2\pi}\frac{d\mu(\theta)}{|t_0-e^{i\theta}|^{2n+2}}<\infty}
    \end{equation}
    where the numbers $a_k$ come from the Blaschke product of the inner-outer 
factorization of
$S$:
    $$
    S(z)=\prod_k\frac{\bar{a}_k}{a_k}\cdot\frac{z-a_k}{1-z\bar{a}_k}\cdot
    \exp\left\{-\int_0^{2\pi}\frac{e^{i\theta}+z}{e^{i\theta}-z}d
    \mu(\theta)\right\}.
   $$
\item[(6)] $(A_S^*)^nK_S(\cdot,0)$ belongs to the range of $(I-\overline{t}_0A_S^*)^{n+1}$
where $A_S=R_0\vert_{\cH(S)}$ is the model operator for $S$.

\item[(7)] There exists a finite Blaschke product $b$ such that 
\begin{equation}
S(z)=b(z)+o(|z-t_0|^{2n+1}).
\label{7.12}
\end{equation} 
as $z$ tends to $t_o$ nontangentially. 
\item[(8)] Asymptotic equality \eqref{7.12} holds for a 
rational function $b$ which is unimodular on $\mathbb  T$
(i.e., $b$ is the ratio of two finite Blaschke products). 
    \end{enumerate}
    Moreover, if conditions (1)--(8) are satisfied, and hence all, then:
    \begin{enumerate}
    \item[(a)] The matrix \eqref{7.8} is positive semidefinite and
    equals
    \begin{equation}
    {\mathbb P}^S_{n}(t_0)=\left[\langle K_{t_0,i}, \,
    K_{t_0,j}\rangle_{\cH(S)}\right]_{i,j=0}^n.
    \label{7.13}
    \end{equation}
    \item[(b)] The functions \eqref{7.10} are boundary reproducing kernels
    in $\cH(S)$ in the sense that
    \begin{equation}  
    \langle f, \, K_{t_0,j}\rangle_{\cH(S)}=f_j(t_0):=\lim_{z\to
    t_0}\frac{f^{(j)}(z)}{j!}\quad\mbox{for}\quad j=0,\ldots,n.
    \label{7.14}
    \end{equation}
    \end{enumerate}
    \end{theorem}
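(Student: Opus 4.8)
The plan is to route the whole list through one family of objects. For $w\in\D$ and $j\ge 0$ set
$$
\kappa_{w,j}(z):=\frac{1}{j!}\,\frac{\partial^{\,j}}{\partial\bar\zeta^{\,j}}\,K_S(z,\zeta)\Big|_{\zeta=w}=\frac{z^{j}}{(1-z\bar w)^{j+1}}-S(z)\sum_{\ell=0}^{j}\frac{\overline{S^{(\ell)}(w)}\,z^{\,j-\ell}}{\ell!\,(1-z\bar w)^{\,j+1-\ell}} .
$$
A Taylor‑coefficient computation shows $\kappa_{w,j}\in\cH(S)$, gives the differentiated reproducing formula $\langle f,\kappa_{w,j}\rangle_{\cH(S)}=f^{(j)}(w)/j!$ for every $f\in\cH(S)$, and yields
$$
\|\kappa_{w,n}\|_{\cH(S)}^{2}=\frac{1}{(n!)^{2}}\,\bigl(\partial_{z}^{n}\partial_{\bar\zeta}^{n}K_S\bigr)(w,w)=\frac{1}{(n!)^{2}}\,\frac{\partial^{2n}}{\partial z^{n}\partial\bar z^{n}}\,\frac{1-|S(z)|^{2}}{1-|z|^{2}}\Big|_{z=w},
$$
which is exactly the ``equivalently'' clause inside condition~(2). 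Moreover, as soon as the nontangential limits $S_0,\dots,S_j$ exist (with $S_\ell=\angle\lim_{z\to t_0}S^{(\ell)}(z)/\ell!$) one has $\kappa_{w,j}(z)\to K_{t_0,j}(z)$ pointwise in $z$ as $w\to t_0$ nontangentially, where $K_{t_0,j}$ is the function~\eqref{7.10}; so $K_{t_0,j}$ is nothing but the boundary value of $\kappa_{w,j}$. With these preliminaries I would establish $(1)\Leftrightarrow(1')\Leftrightarrow(2)\Leftrightarrow(3)\Leftrightarrow(4)\Leftrightarrow(4')$ through the arrows $(3)\Rightarrow(1)\Rightarrow(1')\Rightarrow(2)\Rightarrow(3)$ and $(3)\Rightarrow(4)\Rightarrow(4')\Rightarrow(3)$ (these also produce the ``moreover'' statements (a), (b)), and then hook $(5)$, $(6)$, $(7)$, $(8)$ onto it.

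The mechanical steps. $(1)\Rightarrow(1')$ is trivial. For $(1')\Rightarrow(2)$: evaluation of the $n$‑th derivative is bounded on $\cH(S)\subseteq H^{2}(\cY)$ with representer $\kappa_{rt_0,n}$, and $(1')$ makes the scalars $f^{(n)}(rt_0)$ bounded in $r$ for each fixed $f$; the uniform boundedness principle gives $\sup_{r<1}\|\kappa_{rt_0,n}\|_{\cH(S)}<\infty$, which by the norm identity above forces the $\liminf$ in~\eqref{7.7} to be finite. For $(3)\Rightarrow(1)$: the Carath\'eodory--Julia analysis (see below) shows that $(3)$ makes the Schwarz--Pick matrix $\bigl[\tfrac{1}{i!j!}\partial_{z}^{i}\partial_{\bar z}^{j}K_S(z,z)\bigr]_{i,j=0}^{n}$ converge nontangentially at $t_0$, hence $\sup\{\|\kappa_{w,j}\|_{\cH(S)}:\ w\to t_0\ \text{nontangentially}\}<\infty$; combined with the pointwise convergence $\kappa_{w,j}(z)\to K_{t_0,j}(z)$ this forces the weak limit of $\kappa_{w,j}$ to exist in $\cH(S)$ and to equal $K_{t_0,j}$, so that $f^{(j)}(w)/j!=\langle f,\kappa_{w,j}\rangle\to\langle f,K_{t_0,j}\rangle$ for every $f$ --- this is $(1)$ together with~\eqref{7.14}, i.e.\ (b). Finally $(4)\Rightarrow(4')$ is trivial, and $(4')\Rightarrow(3)$ is a Julia‑type argument: from membership of the displayed function in $\cH(S)\subseteq H^{2}(\cY)$ one forces $S_0,\dots,S_{2n+1}$ to exist, $|S_0|=1$, $\lambda_\ell=S_\ell$, and~\eqref{7.8} Hermitian (Sarason's argument, \cite[Ch.~6]{sarasonbook}, for $n=0$; an induction on the order, peeling off one $\lambda_\ell$ at a time, in general).

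The kernel step and (a). Once $S_0,\dots,S_{2n+1}$ exist, the bordered kernel
$$
\begin{bmatrix}\bigl[\tfrac{1}{i!\,j!}\,\partial_{z}^{i}\partial_{\bar\zeta}^{j}K_S(z,\zeta)\big|_{z=\zeta=rt_0}\bigr]_{i,j=0}^{n}&\bigl[\kappa_{rt_0,j}(z)\bigr]_{j=0}^{n}\\[2pt]\bigl[\overline{\kappa_{rt_0,i}(\zeta)}\bigr]_{i=0}^{n}&K_S(z,\zeta)\end{bmatrix}\succeq 0\qquad(z,\zeta\in\D)
$$
for every $r<1$, since on test data it reduces to a Gram matrix of elements of $\cH(S)$. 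Letting $r\uparrow 1$, the $(1,1)$‑block tends to $\mathbb P^{S}_{n}(t_0)$ (this limit is the content of the Carath\'eodory--Julia analysis) and the borders tend to $K_{t_0,j}$; the limiting bordered kernel is therefore positive on $\D\times\D$, and a Schur‑complement (block) version of Proposition~\ref{bourb} yields $K_{t_0,j}\in\cH(S)$ with $[\langle K_{t_0,i},K_{t_0,j}\rangle]\preceq\mathbb P^{S}_{n}(t_0)$ and $\mathbb P^{S}_{n}(t_0)\succeq 0$; this is $(3)\Rightarrow(4)$. For the reverse estimate, use the weak convergence $\kappa_{rt_0,j}\rightharpoonup K_{t_0,j}$ together with $\langle K_{t_0,i},\kappa_{rt_0,j}\rangle=K_{t_0,i}^{(j)}(rt_0)/j!$, whose $r\uparrow 1$ limit, computed directly from~\eqref{7.10}, is the corresponding entry of $\mathbb P^{S}_{n}(t_0)$; hence $[\langle K_{t_0,i},K_{t_0,j}\rangle]=\mathbb P^{S}_{n}(t_0)$, which is~\eqref{7.13}, i.e.\ (a). (The equivalent ``single function $K_{t_0,n}$'' form of (4) comes from recovering $K_{t_0,j}$, $j<n$, from $K_{t_0,n}$ by applying powers of $R_0$ modulo the built‑in $S$‑corrections.)

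The hard inputs and the remaining conditions. The step $(2)\Rightarrow(3)$ is the genuinely hard one: the generalized Carath\'eodory--Julia theorem, upgrading finiteness of the $\liminf$ of the $(2n)$‑th mixed partial of $(1-|S|^{2})/(1-|z|^{2})$ to existence of the nontangential limits $S_0,\dots,S_{2n+1}$, the normalization $|S_0|=1$, and the Hermitian symmetry of~\eqref{7.8}; I would prove it by reducing to a matrix Julia lemma for the Schwarz--Pick matrix above, which satisfies a Stein‑type identity, so that boundedness of one entry along a sequence $z_{k}\to t_0$ propagates to a full nontangential limit equal to $\mathbb P^{S}_{n}(t_0)\succeq 0$ (this, and its $n=0$ prototype, are in \cite{bk6} and \cite[Ch.~6]{sarasonbook}). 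The second hard input is $(2)\Leftrightarrow(5)$: writing $S$ in its canonical Blaschke--singular--outer factorization and estimating $(1-|S|^{2})/(1-|z|^{2})$ and its derivatives near $t_0$, finiteness of~\eqref{7.7} translates into the bound~\eqref{7.11} (an Ahern--Clark type criterion), and this is the place where one must leave the reproducing‑kernel formalism. The rest is comparatively routine: $(7)\Rightarrow(8)$ is immediate; $(8)\Rightarrow(3)$ because $S(z)=b(z)+o(|z-t_0|^{2n+1})$ with $b$ rational and unimodular on $\mathbb{T}$ (hence analytic across $t_0$) forces $S_j=b_j$ for $j\le 2n+1$ and $\mathbb P^{S}_{n}(t_0)=\mathbb P^{b}_{n}(t_0)$, the latter being the genuine Gram matrix of boundary kernels in the finite‑dimensional $\cH(b)$, hence Hermitian; $(3)\Rightarrow(7)$ by solving the boundary Nevanlinna--Pick problem of matching a finite Blaschke product $b$ to the jet $(S_0,\dots,S_{2n+1})$ at $t_0$, solvable precisely because the associated Pick matrix is $\mathbb P^{S}_{n}(t_0)\succeq 0$ (\cite{bk6}); and $(4)\Leftrightarrow(6)$ via the coisometric realization $\bU_S$ of Theorem~\ref{T:H(S)}: $K_S(\cdot,0)=\be(0)^{*}1$, and the resolvent identities for $(I-\bar t_0 R_0^{*})^{-1}$ and $(R_0^{*})^{n}$ acting on $K_S(\cdot,0)$ produce exactly the function $K_{t_0,n}$ up to the $S$‑dependent corrections in~\eqref{7.10}, so that $(A_S^{*})^{n}K_S(\cdot,0)\in\operatorname{Ran}(I-\bar t_0 A_S^{*})^{n+1}$ iff $K_{t_0,n}\in\cH(S)$; the bookkeeping for this last equivalence is carried out in \cite{frma,bk6}.
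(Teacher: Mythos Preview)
Your proposal is correct and, in its substance, matches what the paper does: the paper is a survey and its ``proof'' of this theorem is simply a list of citations --- $(2)\Leftrightarrow(4)\Leftrightarrow(4')$, $(1)\Rightarrow(3)$, and (a), (b) from \cite{bk3}; $(3)\Rightarrow(1)$ and $(1)\Leftrightarrow(7)\Leftrightarrow(8)$ from \cite{bk4}; $(1)\Leftrightarrow(5)\Leftrightarrow(6)$ from \cite{acl, frma}; and $(1')\Rightarrow(1)$ via Lemma~\ref{L:7.5}. Your differentiated kernels $\kappa_{w,j}$, the bordered positive kernel, and the reduction of the ``hard'' steps to the generalized Carath\'eodory--Julia and Ahern--Clark results are exactly the machinery that underlies those references.

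There is one genuine difference in routing worth flagging. For $(1')\Rightarrow(1)$ the paper invokes Lemma~\ref{L:7.5}, a purely function-theoretic statement (boundedness of $f^{(n)}$ in every Stolz angle forces existence of nontangential limits for $f^{(j)}$, $j<n$), which is independent of the $\cH(S)$ structure. You instead argue $(1')\Rightarrow(2)$ via the uniform boundedness principle applied to the evaluation functionals $f\mapsto f^{(n)}(rt_0)/n!$ with representers $\kappa_{rt_0,n}$, obtaining $\sup_{r}\|\kappa_{rt_0,n}\|_{\cH(S)}<\infty$ and hence the $\liminf$ in \eqref{7.7} finite. Your route is more elementary (no Lemma~\ref{L:7.5} needed) and more in keeping with the reproducing-kernel philosophy of the rest of the argument; the paper's route has the advantage of giving the quantitative Stolz-angle statement of Lemma~\ref{L:7.5} as a by-product, which is needed later in the vector-valued discussion. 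The remaining differences (you hook $(5)$ onto $(2)$ and $(6)$ onto $(4)$, while the paper hooks both onto $(1)$) are cosmetic, since the cited sources \cite{acl, frma} in fact prove exactly the criteria you describe.
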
   
Statements (2)$ \Leftrightarrow$(4)$\Leftrightarrow$(4$^\prime$),
implication (1)$\Rightarrow$(3) and statements (a) and (b) were proved in \cite{bk3}; 
implication (3)$\Rightarrow$(1) and equivalences 
(1)$\Leftrightarrow$(7)$\Leftrightarrow$(8) 
appear in \cite{bk4}. Equivalences (1)$\Leftrightarrow$(5)
$\Leftrightarrow$(6) were established in \cite{acl} for $S$ inner and extended in 
\cite{frma} to general Schur-class functions. The implication (1)$ 
\Rightarrow$(1$^\prime$)
is trivial while the converse implication will be clarified in Lemma \ref{L:7.5} below.
Finally, it is not hard to see that for $n=0$,
the statements (1)--(4$^\prime$) in Theorem \ref{T:7.4} amount to the respective 
statements in 
Theorem \ref{T:7.3}.
\subsection{Boundary interpolation}
Theorem \ref{T:7.4} suggests a boundary interpolation problem: 

\medskip

${\bf BP}_{\cH(S)}$: {\em Given a Schur-class function 
$S$ satisfying the Carath\'eodory-Julia condition \eqref{7.7} at $t_0\in\mathbb T$ 
(or one of the equivalent conditions from Theorem \ref{T:7.3}) and given complex numbers 
$f_0,\ldots,f_n$, find all $f\in\cH(S)$
such that $\|f\|_{\cH(S)}\le 1$ and
\begin{equation}
f_j(t_0):=\angle\lim_{z\to t_0}\frac{f^{(j)}(z)}{j!}=f_j\quad\mbox{for}\quad
 j=0,\ldots,n.
\label{7.15}
\end{equation}}
 According to Theorem \ref{T:7.4}, condition \eqref{7.7} guarantees that
    all the boundary limits in \eqref{7.15} exist as well as the boundary
    limits  $S_j:=S_j(t_0)$ exist for $j=0,\ldots,2n+1$. Introduce the matrices 
\begin{equation}
T=\left[\begin{array}{cccc} \bar{t}_0 & 1 & \ldots & 0 \\
    0 &  \bar{t}_0 & & \vdots \\ \vdots & \ddots & \ddots& 1\\
    0 & \ldots & 0 & \bar{t}_0\end{array}\right]\quad\mbox{and}\quad 
\begin{bmatrix}E \\ N \\ {\bf y}\end{bmatrix}= \left[\begin{array}{cccc} 1& 0& \ldots&0 \\ 
\overline{S}_{0}&    \overline{S}_{1}& \ldots & \overline{S}_{n}\\
    \overline{f}_{0}&    \overline{f}_{1}& \ldots & \overline{f}_{n}
    \end{array}\right].
\label{7.16}   
\end{equation}
Simple matrix computations show that 
$$
\left[\begin{array}{c} E \\ N
\end{array}\right]\left(I-zT\right)^{-1}=
\left[\begin{array}{ccc}{\displaystyle\frac{1}{1-z\bar{t}_0}} &
    \ldots & {\displaystyle\frac{z^{n}}{(1-z\bar{t}_0)^{n+1}}}\\
    {\displaystyle\frac{\overline{S}_{0}}{1-z\bar{t}_0}} & \ldots &
    {\displaystyle\sum_{\ell=0}^{n}\frac{\overline{S}_{\ell}z^{n-\ell}}
    {(1-z\bar{t}_0)^{n+1-\ell}}}\end{array}\right].
$$
    Multiplying the latter equality by $\begin{bmatrix} 1 &
    -S(z)\end{bmatrix}$ on the left  and taking into account formulas \eqref{7.10} for 
    $K_{t_0,j}$  one gets
\begin{equation}
F^S(z):=(E-S(z)N)(I-zT)^{-1}=\begin{bmatrix} K_{t_0,0}(z) &  K_{t_0,1}(z)&\ldots & 
 K_{t_0,n}(z)\end{bmatrix}.
\label{7.17}
    \end{equation}
The next task is to show that the problem {\bf AIP}$_{\cH(S)}$
with $\cX=\C^{n+1}$ and $\{S, T, E, N, {\bf y}\}$ taken in the form \eqref{7.16}
is equivalent to the problem {\bf BP}$_{\cH(S)}$. First it will be shown 
that the data is ${\bf AIP}_{\cH(S)}$-admissible.

\smallskip

The first requirement in Definition \ref{D:6.1} is self-evident since
$(I-zT)^{-1}$ is a rational function with no poles inside $\D$.
However, it is worth noting that the pair $(E,T)$ is not output-stable
and so {\bf BP}$_{\cH(S)}$ cannot be embedded into the scheme of the
problem ${\bf IP}_{\cH(S)}$. 
    
\smallskip

For a generic vector  $x=\operatorname{Col}_{0\le j\le n}x_{j}$ in  $\cX=\C^{n+1}$,
It follows from \eqref{7.17},
\begin{equation}
F^S(z)x=\sum_{j=0}^n K_{t_0,j}(z)x_j
\label{7.18}
\end{equation}
and the latter function belongs to $\cH(S)$ by statement (4) in Theorem \ref{T:7.4}.
It then follows from \eqref{7.13} that $\left\langle  {\mathbb P}^S_{n}(t_0)x, \, 
x\right\rangle_{\cX}=\|F^Sx\|^2_{\cH(S)}$ and therefore 
${\mathbb P}^S_{n}(t_0)=(F^S)^{[*]}F^S$. It was shown in \cite{bk3} that 
the structured matrix ${\mathbb P}^S_{n}(t_0)$ of the form \eqref{7.8} satisfies the Stein 
identity 
$$
{\mathbb P}^S_{n}(t_0)-T^*{\mathbb P}^S_{n}(t_0)T=E^*E-N^*N.
$$
Therefore, the data set $\{S, T, E, N, {\bf y}\}$ is ${\bf AIP}_{\cH(S)}$-admissible.
Observe next that by \eqref{7.14} and \eqref{7.18},  for  every 
$f\in\cH(S)$ the following condition holds:
    $$
    \langle M_{F^S}^{[*]}f, \; x\rangle_{\cX}=\langle f, \;
    M_{F^S}x\rangle_{\cH(S)}=\sum_{j=0}^{n}f_j(t_0)\overline{x}_{j}.
$$
    On the other hand, for ${\bf y}$ defined in \eqref{7.16}, 
    $\langle {\bf y}^*, \, x\rangle_{\cX}=\sum_{j=0}^{n}f_{j}\overline{x}_{j}$. 
Therefore interpolation conditions \eqref{7.15} are equivalent to the equality
    $$
    \langle M_{F^s}^{[*]}f, \; x\rangle_{\cX}=\langle {\bf y}^*, \,
    x\rangle_{\cX}
    $$
holding for every $x\in\cX$, i.e., to equality $M_{F^s}^{[*]}f={\bf y}^*$. 
Thus the problem {\bf AIP}$_{\cH(S)}$
with the data set $\{S, T, E, N, {\bf y}\}$ taken in the form \eqref{7.16},   
is equivalent to the {\bf BP}$_{\cH(S)}$. In particular, the problem {\bf BP}$_{\cH(S)}$
has a solution if and only if ${\mathbb P}^S_{n}(t_0)\ge {\bf y}^*{\bf y}$ and all 
solutions to 
the problem are parametrized as in Theorem \ref{T:6.10}.

\subsection{The vector-valued case} 
In the vector-valued setting, two additional issues need to be 
addressed. Firstly, if $\dim\cY=\infty$, one should specify the topology with respect to 
which the boundary 
limits should converge. This issue is easily resolved due to the following result; see 
\cite[Lemma 2.1]{bk5} 
for 
the proof. 

\begin{lemma}
Let $n\in\mathbb N$, $t_0\in\mathbb T$, let $f$ be an $\cL(\cU,\cY)$-valued function 
analytic on ${\mathcal U}_{t_0,\varepsilon}=\{z\in\D: \, 0<|z-t_0|<\varepsilon\}$
and assume that for any $\alpha\in(0,\frac{\pi}{2})$ there exists 
$\gamma_\alpha<\infty$ such that
$$
\|f^{(n)}(z)\|\le \gamma_{\alpha}\qquad\mbox{for all}\quad  
z\in \{z\in{\mathcal U}_{t_0,\varepsilon}: \,|{\rm arg}(z-t_0)|<\alpha\}.
$$
Then  the uniform limits $\angle{\displaystyle
\lim_{z\to t_0}f^{(j)}(z)}$ exist for $j=0,\ldots,n-1$.

In particular, the statement holds if the weak limit
$\angle {\displaystyle\lim_{z\to t_0}f^{(n)}(z)}$ exists.
\label{L:7.5}
\end{lemma}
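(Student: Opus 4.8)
The plan is to prove the lemma by descending induction on the order of the derivative, converting a norm bound on $f^{(k)}$ into a Lipschitz estimate for $f^{(k-1)}$ along a Stolz angle by means of the fundamental theorem of calculus, and then iterating down to $j=0$.

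First I would fix $\alpha\in(0,\pi/2)$ and write $\Gamma_\alpha=\{z\in{\mathcal U}_{t_0,\varepsilon}:\,|{\rm arg}(z-t_0)|<\alpha\}$ for the corresponding Stolz angle appearing in the hypothesis. The geometric fact to record is that $\Gamma_\alpha$ is convex, being the intersection of $\D$, the disc $\{|z-t_0|<\varepsilon\}$, and the open wedge of opening $2\alpha<\pi$ with vertex $t_0$; consequently, for any $w,z\in\Gamma_\alpha$ the straight segment $[w,z]$ lies in $\Gamma_\alpha$ (and avoids $t_0$, since $w$ and $z$ lie on the same side of $t_0$), so $f$ and all its derivatives are analytic on the whole of $[w,z]$.

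Next I would carry out the inductive step. Suppose that for some $k$ with $1\le k\le n$ the function $f^{(k)}$ is bounded on $\Gamma_\alpha$ by a constant $\gamma$ (for $k=n$ this is the hypothesis, with $\gamma=\gamma_\alpha$). From the identity $f^{(k-1)}(z)-f^{(k-1)}(w)=\int_0^1 f^{(k)}\bigl(w+t(z-w)\bigr)\,(z-w)\,dt$ and the bound on $f^{(k)}$ one obtains $\|f^{(k-1)}(z)-f^{(k-1)}(w)\|\le\gamma\,|z-w|$ for all $w,z\in\Gamma_\alpha$. Fixing a base point $z_0\in\Gamma_\alpha$ this gives $\|f^{(k-1)}(z)\|\le\|f^{(k-1)}(z_0)\|+2\gamma\varepsilon$ on $\Gamma_\alpha$, so $f^{(k-1)}$ is again bounded on $\Gamma_\alpha$; and restricting to $w,z$ within distance $\delta$ of $t_0$ gives $\|f^{(k-1)}(z)-f^{(k-1)}(w)\|\le 2\gamma\delta\to0$ as $\delta\to0$. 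By the Cauchy criterion and completeness of $\cL(\cU,\cY)$ in the operator norm, $\angle\lim_{z\to t_0}f^{(k-1)}(z)$ exists in the operator norm within $\Gamma_\alpha$. Since $\alpha\in(0,\pi/2)$ was arbitrary, $f^{(k-1)}$ possesses a uniform nontangential limit at $t_0$ and is bounded on every Stolz angle, which is exactly the hypothesis needed to continue the induction. Running it from $k=n$ down to $k=1$ yields the existence of $\angle\lim_{z\to t_0}f^{(j)}(z)$ for $j=0,\dots,n-1$, as claimed. For the last assertion I would observe that existence of the weak limit $A:=\angle\lim_{z\to t_0}f^{(n)}(z)$ forces $f^{(n)}$ to be norm-bounded on each $\Gamma_\alpha$: otherwise there are $z_k\in\Gamma_\alpha$ with $z_k\to t_0$ (using that $\overline{\Gamma_\alpha}$ meets $\mathbb T$ only at $t_0$) and $\|f^{(n)}(z_k)\|\to\infty$, contradicting the boundedness of the weakly convergent sequence $\{f^{(n)}(z_k)\}$ furnished by the uniform boundedness principle. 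Thus the hypothesis of the main statement is met.

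I do not anticipate a serious obstacle: the whole argument is soft. The only place requiring attention is the interplay between applying the Lipschitz bound on the fixed cone $\Gamma_\alpha$ while simultaneously driving the endpoints toward $t_0$ to extract a Cauchy tail, together with the elementary convexity of the Stolz angle that keeps the connecting segments inside the region of analyticity; everything else reduces to the Cauchy criterion and the induction.
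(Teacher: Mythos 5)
Your argument is correct and is essentially the standard proof (the paper itself gives no proof, deferring to \cite[Lemma~2.1]{bk5}, which proceeds the same way): convexity of the truncated Stolz angle, the fundamental theorem of calculus along segments to convert the bound on $f^{(k)}$ into a Lipschitz estimate for $f^{(k-1)}$, and the Cauchy criterion in the Banach space $\cL(\cU,\cY)$. The only step worth tightening is in the weak-limit addendum: a norm-unbounded sequence in $\Gamma_\alpha$ need only accumulate somewhere in $\overline{\Gamma_\alpha}$, which in general can meet ${\mathbb T}\setminus\{t_0\}$ (if $\varepsilon$ is large and $\alpha$ is close to $\pi/2$) or the arc $\{|z-t_0|=\varepsilon\}$ where analyticity is not assumed; since the conclusion is local at $t_0$, first replace $\varepsilon$ by a small $\delta$ so that the only limit point of the truncated cone lying outside the domain of analyticity is $t_0$ itself, after which your uniform-boundedness argument applies verbatim.
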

Thus, once the existence of the nontangential boundary limit (even in the weak sense) for 
the $n$-th 
derivative
of any function $f\in\cH(S)$ is settled, the existence of strong nontangential boundary 
limits for derivatives 
of lower order will be settled automatically.

\smallskip

Secondly, in the vector-valued case, one may want to guarantee the existence of the full  
or just of a 
tangential boundary limit. The formulation which incorporates both options appears to be 
the following.:
{\em Given a Schur-class function $S\in\cS(\cU,\cY)$ and given an $\cL(\cY,\cG)$-valued 
polynomial  
\begin{equation}
A(z)=\sum_{j=0}^n A_j(z-t_0)^j,
\label{7.18a}
\end{equation}
find conditions which are necessary and sufficient for the existence of strong boundary 
limits 
 \begin{equation}
\angle\lim_{z\to t_0}(Af)^{(j)}(z) \quad\mbox{for}\quad j=0,\ldots,n
   \label{7.19}
    \end{equation}
and for any function $f\in\cH(S)$}. The answer is given in the following theorem.
\begin{theorem}
The strong limits \eqref{7.19} exist for any function $f\in\cH(S)$ id and only if
\begin{equation}
\liminf_{z\to t_0}\left\langle
\frac{\partial^{2n}}{\partial z^n\partial\bar{z}^n}
\left(A(z)\frac{I_{\cY}-S(z)S(z)^*}{1-|z|^2}A(z)^*\right)
g,\, g\right\rangle<\infty 
\label{7.20}
\end{equation}
for every $g\in\cG$. If this is the case, then the limits 
\begin{equation}
b_j:=\angle \lim_{z\to t_0} \frac{(AS)^{(j)}(z)}{j!}\qquad (j=0,\ldots,n)
\label{7.21}
\end{equation}
exist in the strong sense and  the limit 
\begin{equation}
P=\angle\lim_{z\to t_0}\frac{\partial^{2n}}{\partial z^n\partial\bar{z}^n}
\left(A(z)\frac{I_{\cY}-S(z)S(z)^*}{1-|z|^2}A(z)^*\right)
\label{7.22}
\end{equation}
exists in the weak sense.
\label{T:7.6}
\end{theorem}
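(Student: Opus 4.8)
The plan is to recast the assertion in reproducing-kernel terms and then run a Banach--Steinhaus argument parallel to the one behind Theorem \ref{T:7.4}, with the polynomial weight $A$ and the coefficient space $\cG$ handled via Leibniz' rule and Lemma \ref{L:7.5}. First I would introduce, for each $z\in\D$, the jet operator
\[
  J_z\colon \cH(S)\to \cG^{n+1},\qquad
  J_z f=\Bigl(\tfrac{(Af)^{(j)}(z)}{j!}\Bigr)_{j=0}^{n},
\]
which is bounded because evaluation of functions in $\cH(S)$ and of their derivatives at interior points is bounded (Theorem \ref{T:H(S)}(1)). Using the reproducing property of $K_S$, the Leibniz expansion $(Af)^{(j)}=\sum_k\binom jk A^{(k)}f^{(j-k)}$, and the explicit form \eqref{deBRkernel} of $K_S$, I would compute the adjoint $J_z^{*}\colon \cG^{n+1}\to\cH(S)$ explicitly (it produces, for $g\in\cG$, the analytic ``generalized boundary kernels'' whose $t_0$-limits, when they exist, are the functions $K_{t_0,j}$ of \eqref{7.10} weighted by $A$), and then identify the block Gram operator $J_zJ_z^{*}$ on $\cG^{n+1}$.

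The first main step is then to verify the interior identity
\[
  \langle J_zJ_z^{*}\,\mathbf g,\mathbf g\rangle
  \;=\;\Bigl\langle \tfrac{\partial^{2n}}{\partial z^{n}\partial\bar z^{n}}
      \bigl(A(z)K_S(z,z)A(z)^{*}\bigr)\,g,g\Bigr\rangle
\]
for a suitable diagonal vector $\mathbf g\in\cG^{n+1}$ built from $g\in\cG$ --- the vector analogue of the identities \eqref{7.8}, \eqref{7.13} and of the ``$\tfrac{\partial^n}{\partial\bar\zeta^n}K_S(\cdot,\zeta)$ stays bounded'' reformulation in Theorem \ref{T:7.4}(2). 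After a triangular change of coordinates absorbing the coefficients $A_k$ and the Taylor coefficients of $(1-z\bar t_0)^{-1}$, the full $(n+1)\times(n+1)$ Gram matrix of $J_z^{*}$ is comparable to this single mixed $\bar\partial\partial$-derivative; this differentiation bookkeeping is routine but the most laborious part. Granting it, one direction is immediate: if the limits \eqref{7.19} exist for every $f\in\cH(S)$, then the functionals $f\mapsto (Af)^{(j)}(z)/j!$ converge nontangentially, hence are uniformly bounded there by Banach--Steinhaus, and the interior identity forces the $\limsup$ --- a fortiori the $\liminf$ --- in \eqref{7.20} to be finite for each $g$.

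For the converse, assume \eqref{7.20}. Along a sequence $z_k\to t_0$ realizing the $\liminf$, the operators $J_{z_k}^{*}$ tested against a fixed $g$ are uniformly norm-bounded by the interior identity, so each net $J_{z_k}^{*}\mathbf g$ has a weakly convergent subsequence in $\cH(S)$. To promote this to a sequence-independent boundary kernel, I would first observe that \eqref{7.20} applied coordinatewise puts the scalar functions $z\mapsto\langle A(z)K_S(z,z)A(z)^{*}g,g\rangle$ under a tangential generalized Carath\'eodory--Julia hypothesis; applying the mechanism of Theorems \ref{T:7.3}--\ref{T:7.4} to the scalar compressions $g^{*}Af$ and $g^{*}AS$ yields that the angular limits $b_j$ of $(AS)^{(j)}/j!$ in \eqref{7.21} exist strongly and that the limit $P$ in \eqref{7.22} exists weakly, after which the explicit formula for $J_z^{*}$ extends continuously to $z=t_0$ and exhibits $\lim_{z\to t_0}J_z^{*}\mathbf g$ as a genuine element $\Phi_{t_0}\mathbf g\in\cH(S)$. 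Then for every $f\in\cH(S)$ and $g\in\cG$,
\[
  \Bigl\langle \tfrac{(Af)^{(j)}(t_0)}{j!},g\Bigr\rangle
  :=\langle f,\Phi_{t_0}\mathbf g\rangle_{\cH(S)},
\]
which gives the nontangential limits \eqref{7.19} in the weak topology of $\cG$; Lemma \ref{L:7.5} upgrades them to strong limits once one checks, again from the uniform bound, that $\|(Af)^{(n)}(z)\|$ stays bounded in each Stolz angle.

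The step I expect to be the main obstacle is precisely this passage from boundedness to convergence at the boundary: turning the weak subsequential limits of $J_{z_k}^{*}\mathbf g$ into a single element of $\cH(S)$ serving as a boundary reproducing kernel, and simultaneously showing that the $\liminf$ in \eqref{7.20} is an honest angular limit equal to \eqref{7.22}. In the scalar case this is carried by the explicit kernels $K_{t_0,j}$ and the structured Pick matrix $\mathbb{P}^S_n(t_0)$; here the same bookkeeping must be done with the extra Leibniz terms $A^{(k)}$ and with $g$ ranging over a possibly infinite-dimensional $\cG$, which is exactly where Lemma \ref{L:7.5} (reducing the infinite-dimensional strong-limit question to a weak one for the top-order derivative) and the Julia--Carath\'eodory angular-derivative theory become indispensable.
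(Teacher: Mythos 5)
First, a point of reference: the survey itself does not prove Theorem \ref{T:7.6}; it defers to \cite{bk5} (and \cite{bd2} for the matrix case), where the argument is carried out by building the structured Pick operator and the higher-order Julia--Carath\'eodory analysis directly in the operator-valued setting. Your skeleton --- reformulate \eqref{7.20} as a norm bound on reproducing kernels, get necessity from the uniform boundedness principle, get sufficiency by producing boundary kernels and invoking Lemma \ref{L:7.5} --- is the right architecture and matches that source. The necessity direction as you describe it is essentially complete. But two of the steps you treat as grantable are, in fact, where the theorem lives. The ``interior identity'' is misstated: the quantity in \eqref{7.20} equals $\bigl\|\tfrac{\partial^{n}}{\partial\bar\zeta^{n}}\bigl(K_S(\cdot,\zeta)A(\zeta)^{*}g\bigr)\big|_{\zeta=z}\bigr\|^{2}_{\cH(S)}$, i.e.\ the norm of the kernel for the \emph{single top-order} functional $f\mapsto\langle (Af)^{(n)}(z),g\rangle$; it is not the quadratic form of the full Gram operator $J_zJ_z^{*}$, and the claim that the whole $(n+1)\times(n+1)$ Gram matrix is ``comparable'' to this one entry after a triangular change of coordinates is precisely the nontrivial content of the higher-order Carath\'eodory--Julia theorem, not routine bookkeeping.

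The more serious gap is the step you yourself flag as the main obstacle: converting the finiteness of the \emph{unrestricted} $\liminf$ in \eqref{7.20} into honest nontangential convergence of the kernels and of the limits \eqref{7.21}, \eqref{7.22}. You delegate this to ``applying the mechanism of Theorems \ref{T:7.3}--\ref{T:7.4} to the scalar compressions $g^{*}Af$ and $g^{*}AS$'', but that reduction is not available. The compressed kernel $g^{*}A(z)K_S(z,\zeta)A(\zeta)^{*}g$ has the form $\bigl(\varphi(z)\varphi(\zeta)^{*}-\psi(z)\psi(\zeta)^{*}\bigr)/(1-z\bar\zeta)$ with $\varphi=g^{*}A$ a row polynomial and $\psi=g^{*}AS$; it is not of the form $\bigl(1-s(z)\overline{s(\zeta)}\bigr)/(1-z\bar\zeta)$ for any scalar Schur function $s$, so Theorems \ref{T:7.3} and \ref{T:7.4} simply do not apply to it. Moreover $A(t_0)^{*}g$ may vanish --- the tangential case, which is the entire reason for carrying the weight $A$ --- and then no normalization recovers a scalar Schur-class reformulation; the proofs of Theorems \ref{T:7.3}--\ref{T:7.4} use the Schur-class structure (Pick-matrix identities, the Schur algorithm) that the compression destroys. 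Finally, even granting weak convergence of the boundary kernels, Lemma \ref{L:7.5} only upgrades the derivatives of order $j\le n-1$ to strong limits; the strong existence of the top-order limits in \eqref{7.19} and of $b_n$ in \eqref{7.21} requires a separate argument that your sketch does not supply.
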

For the proof and for more equivalent reformulations and consequences of the 
Carath\'eodory-Julia condition
\eqref{7.20} (that is, for the operator-valued version of the Carath\'eodory-Julia 
theorem),
a good reference is \cite{bk5}; see also  \cite{bd2} for the 
matrix-valued case. Note that the case $n=0$
in the matrix-valued setting was studied earlier in \cite{dymde, kov3} and \cite[Section 
8]{dym}, 

\smallskip

Observe that upon choosing $A(z)\equiv a_0\in\cL(\cY,\cG)$ one can derive from Theorem 
\ref{T:7.6} the 
necessary and 
sufficient condition for the existing  of the boundary limit 
${\displaystyle\angle\lim_{z\to t_0}a_0f^{(n)}(z)}$ for all 
$f\in\cH(S)$.  Finally, here is a  formulation of the 
vector-valued analog of the problem  ${\bf BP}_{\cH(S)}$ from the previous section:

\medskip

{\em Given a Schur-class function
$S\in\cS(\cU,\cY)$ and an $\cL(\cY,\cG)$-valued polynomial $A$  subject to the 
Carath\'eodory-Julia 
condition \eqref{7.20} at  $t_0\in\mathbb T$ and given vectors $f_0,\ldots, f_n\in\cY$, 
find all 
$f\in\cH(S)$ such that $\|f\|_{\cH(S)}\le 1$ and
\begin{equation}
f_j(t_0):=\angle\lim_{z\to t_0}\frac{f^{(j)}(z)}{j!}=f_j\quad\mbox{for}\quad
 j=0,\ldots,n.
\label{7.23}
\end{equation}}
It turns out that as in the scalar-valued case, this problem is equivalent to the problem 
${\bf AIP}_{\cH(S)}$ with $\cX=\cG^{n+1}$ and with the data set 
$$
T=\left[\begin{array}{cccc} \bar{t}_0 I_\cG& I_\cG & \ldots & 0 \\
    0 &  \bar{t}_0 I_\cG & & \vdots \\ \vdots & \ddots & \ddots& I_\cG\\
    0 & \ldots & 0 & \bar{t}_0 I_\cG\end{array}\right]\quad\mbox{and}\quad
\begin{bmatrix}E \\ N \\ {\bf y}\end{bmatrix}= \left[\begin{array}{cccc} a_0^*& a_1^*& 
\ldots& a_n^* \\
b_{0}^*& b_{1}^*& \ldots & b_{n}^*\\
f^*_{0}&  f^*_{1}& \ldots & f^*_{n}
    \end{array}\right]
$$
where the operators $b_0,\ldots,b_n\in\cL(\cU,\cG)$ are defined in \eqref{7.21}. This data 
set turns out
to be ${\bf AIP}_{\cH(S)}$ admissible;.  Details are omitted here; 
note only that the operator $P=(F^S)^{[*]}F^S$ appears be equal to that  in \eqref{7.23},

\section{Concluding remarks}

The preceding sections give an overview of some of the most recent 
applications of de Branges-Rovnyak spaces to a variety of problems in 
function theory, in particular, in interpolation theory.  As the 
following examples illustrate, there is still ongoing work pushing the 
theory in still more directions.

\subsection{Canonical de Branges-Rovnyak functional-model spaces:  
multivariable settings} \label{S:multivariable} 

Realization  of a Schur-class function as the transfer function of a 
canonical functional-model colligation having additional metric 
properties (e.g., coisometric, isometric, or unitary), has been extended to 
settings where the unit disk playing the role of the underlying 
domain is replaced by a more 
general domain $\cD$ in ${\mathbb C}^{d}$; see \cite{bbBall} for the case of the unit ball 
${\mathbb B}^{d}$ in ${\mathbb C}^{d}$,  \cite{bbPoly} for the case 
of the unit polydisk ${\mathbb D}^{d}$, \cite{bbGen} for the case of 
a general domain with matrix polynomial defining function.

\smallskip

\subsection{Extensions to Kre\u{\i}n space settings}
Much of the theory of de Branges-Rovnyak spaces actually extends to Pontryagin and 
Kre\u{\i}n-space settings, where  Hilbert spaces coming up in 
various places are allowed to be Kre\u{\i}n spaces (i.e., the space 
is a direct sum of a Hilbert space and an anti-Hilbert space), or at 
least Pontryagin spaces (where the anti-Hilbert space is finite 
dimensional).  

\smallskip

The AIP approach to interpolation has been extended to the 
Kre\u{\i}n-space setting in work of Derkach \cite{Der1, Der2}; this 
includes a Pontryagin-space formulation of the  Nikolskii-Vasyunin 
model space $\widetilde \cD(S)$ in terms of Kre\u{\i}n-Langer 
representations.

\end{document}